\documentclass[11pt]{article}

\usepackage{color,xcolor}
%FONTS AND FORMATTING
\usepackage[margin = 1.5in]{geometry}
\usepackage{setspace}
\usepackage[T1]{fontenc}
\usepackage{charter} %font
\usepackage{euler} %mathfont
\usepackage[normalem]{ulem}

%OTHER PACKAGES
\usepackage{float}
\usepackage{soul} % has \ul for underlining
\usepackage{bbm} % has \mathbbm{1} for indicators
\usepackage{verbatim}
\usepackage{amsmath}
\usepackage{amsthm}
\usepackage{amssymb}
\usepackage{textcomp}
\usepackage{graphicx}
\usepackage[english]{babel}
\usepackage{tikz}
\usepackage[linguistics]{forest}
\usepackage{dblfloatfix}
\usepackage{csquotes}
\usepackage{hyperref}
\usepackage{nameref}
\usepackage{tabularray}
\usepackage{theoremref}
\usepackage{natbib}
\usepackage{comment}
\usepackage{etoolbox}
\usepackage{algorithm2e}
\usepackage{algpseudocode}
\allowdisplaybreaks %lets math break between pages

%CHANGING ENUMERATE TO ROMAN NUMERALS
\usepackage{enumerate}

%THEOREM ENVIRONMENTS
\newtheorem{theorem}{Theorem}[section]

\newtheorem{lemma}[theorem]{Lemma}
\newtheorem*{lemma*}{Lemma}
\newtheorem*{claim}{Claim}
\newtheorem{corollary}[theorem]{Corollary}

\newtheorem*{theorem*}{Theorem}

%SECTION TITLE FORMATTING
\usepackage{titlesec}
\titleformat{\section}{\sc\Large}{\thesection}{1em}{}
\titleformat{\subsection}{\sc\large}{\thesubsection}{1em}{}

%BLACKBOARD BOLD, SOME ARE NOT ALLOWED CHANGE IF NEEDED

\newcommand{\E}{\mathbb{E}}

\newcommand{\N}{\mathbb{N}} 

\newcommand{\R}{\mathbb{R}} 

\newcommand{\Z}{\mathbb{Z}} 

%SOME STANDARD COMMANDS

%PROBABILITY COMMANDS
\newcommand{\1}{\mathbf{1}}
\newcommand{\prob}{\mathbf{P}}
\newcommand{\ex}{\mathbf{E}}

\newcommand{\bin}{\ensuremath{\operatorname{Bin}}}
\newcommand{\negbin}{\ensuremath{\operatorname{N-Bin}}}
    \newcommand{\geo}{\ensuremath{\operatorname{Geo}}}
    \newcommand{\hyper}{\ensuremath{\operatorname{HG}}}
\newcommand{\ber}{\ensuremath{\operatorname{Ber}}}
\newcommand{\unif}{\ensuremath{\operatorname{Unif}}}

\newcommand{\dirichlet}{\ensuremath{\operatorname{Dirichlet}}}

\newcommand{\convdist}{\xrightarrow[]{d}}
\newcommand{\convprob}{\ensuremath{\xrightarrow[]{\mathbb{P}}}} 
\newcommand{\convas}{\ensuremath{\xrightarrow[]{\text{a.s.}}}} 
 
\newcommand{\dist}{\ensuremath{\overset{d}{=}}}

%BOLD MATH

%MATHCAL SHORTCUT FOR CAPITAL LETTERS

\newcommand\cE{\mathcal E}
\newcommand\cF{\mathcal F}
\newcommand\cG{\mathcal G}

\newcommand\cR{{\mathcal R}}
\newcommand\cS{{\mathcal S}}

%NOTATION FOR THIS DOCUMENT ONLY
\newcommand{\erp}{\ensuremath{\operatorname{ERP}}} 
\newcommand{\king}{\ensuremath{\textsc{Kingman}}} 

\newcommand{\height}{\ensuremath{\operatorname{height}}}
\newcommand{\urrf}{\operatorname{URRF}}

%%%% marginal comments and colours
\newcommand\marginal[1]{\marginpar{\raggedright\parindent=0pt\tiny #1}}

\definecolor{clou}{rgb}{0.8,0.25,0.5125}
% \colorlet{clou}{orange!80!yellow}

\newcommand{\lmar}[1]{\textcolor{clou}{\marginal{#1 \\ -lab.}}} 

\definecolor{cmax}{rgb}{0.5, 0.5, 0.8}
\newcommand{\ma}[1]{\textcolor{cmax}{#1}}
\newcommand{\mmar}[1]{\marginal{\color{cmax} #1 \\ -mk.}}

\newcommand{\cmar}[1]{\marginal{#1 \\ -ca.}}
\definecolor{cae}{rgb}{0.5,0,0.5}

%TITLE AND AUTHORS
\title{\sc{Kingman's coalescent on a random graph}}
\author{\sc{Louigi Addario-Berry, Caelan Atamanchuk, and Maxwell Kaye}}
\date{}

%AFFILIATIONS AND CONTACT
\AtEndDocument{%
  \par
  \medskip
  \begin{tabular}{@{}l@{}}%
    \textsc{Louigi Addario-Berry}\\
    \textsc{email: }\texttt{louigi@gmail.com}\\
    \textsc{Department of Mathematics and Statistics}\\
    \textsc{McGill University}\\
    \\
    \textsc{Caelan Atamanchuk}\\
    \textsc{email: }\texttt{caelan.atamanchuk@gmail.com}\\
    \textsc{Department of Mathematics and Statistics}\\
    \textsc{McGill University}\\
    \\
    \textsc{Maxwell Kaye}\\
    \textsc{email: }\texttt{maxwell.kaye@mail.mcgill.ca}\\
    \textsc{Department of Mathematics and Statistics}\\
    \textsc{McGill University}\\
  \end{tabular}}

\begin{document}

\maketitle

\setstretch{1.0}

\begin{abstract}
    We introduce a generalization of Kingman's coalescent on $[n]$ that we call the \emph{Kingman coalescent} on a graph $G = ([n],E)$. Specifically, we generalize a forest valued representation of the coalescent introduced in \cite{addario2018high}. The difference between the Kingman coalescent on $G$ and the normal Kingman coalescent on $[n]$ is that two trees $T_1,T_2$ with roots $\rho_1,\rho_2$ can merge if and only if $\{\rho_1,\rho_2\} \in E$.
    When this process finishes (when there are no trees left that can merge anymore), we are left with a random spanning forest that we call a \emph{Kingman forest} of $G$. In this article, we study the Kingman coalescent on Erd\H{o}s-R\'{e}nyi random graphs, $G_{n,p}$. We derive a relationship between the Kingman coalescent on $G_{n,p}$ and uniform random recursive trees, which provides many answers concerning structural questions about the corresponding Kingman forests. We explore the heights of Kingman forests as well as the sizes of their trees as illustrative examples of how to use the connection. Our main results concern the number of trees, $C_{n,p}$, in a Kingman forest of $G_{n,p}$. For fixed $p \in (0,1)$, we prove that $C_{n,p}$ converges in distribution to an almost surely finite random variable as $n \to \infty$. For $p = p(n)$ such that $p \to 0$ and $np \to \infty$ as $n \to \infty$, we prove that $C_{n,p}$ converges in probability to $\frac{2(1-p)}{p}$.
\end{abstract}

\begin{comment}
        \mmar{We might want to define the Kingman coalescent on merging blocks of a partition rather than on trees to expand our audience.}\cmar{Unless I am mistaken, the ordinary Kingman coalescent does not have the notion of a ``root'' and so I think the dynamics are sort of ill-defined without tracking the whole forest.}\mmar{Right. Despite our main results just being about the number and size of comps, we need a representative element $r_B$ of each block $B$ for the underlying graph $G$ to be defined. Can $r_B$ just be min($B$)? When 2 blocks merge, just take the smaller label as the new $r_B$. If we would have taken the other vertex as the root in the tree version, that corresponds to a different initial node labeling, which is allowed only because $E(G_{np})$ iid.}  
\end{comment}

% \setstretch{1.2}

\section{Introduction}\label{sec:intro}

\subsection{Definitions and results}

Let $G$ be a finite graph with $|V(G)|=n$. A \emph{rooted spanning forest} of $G$ is a set $\{T_j,j \in [k]\}$ of vertex-disjoint, rooted subtrees of $G$ with $\cup_{i = 1}^k V(T_i) = V(G)$. For a rooted tree $T$ we write $\rho(T)$ to denote the root of $T$. We always view the edges of a rooted tree as directed towards the root.

The \emph{Kingman coalescent on G} is defined as follows. Let $f_0$ be the empty rooted spanning forest of $G$, with $n$ elements, each of which is a rooted tree of size 1. For $i \geq 0$, if $\{\rho(T): T \in f_i\}$ is an independent set in $G$, then set $f_{i+1}=f_i$. 
Otherwise, there exists at least one edge connecting distinct roots of trees in $f_i$; choose one such edge $\{\rho(T),\rho(T')\}$ uniformly at random, orient it uniformly at random as $(\rho(T),\rho(T'))$, and add it to $f_i$ to form $f_{i+1}$. The unique tree in $f_{i+1}\setminus f_i$ has vertex set $v(T)\cup v(T')$ and root $\rho(T')$. Note that $f_m = f_{n-1}$ for all $m \geq n-1$. We write $F(G)=f_{n-1}$ for the final forest built by the process, which we call the \emph{Kingman forest} of $G$, and we write $(f_i,i \ge 0) \dist \king(G)$ for the process as a whole. 

Note that if $G=K_n$ is the complete graph with $n$ vertices, then to form $f_{i+1}$ from $f_i$, a uniformly random pair of trees of $f_i$ is chosen and merged. In this case, writing $\Pi_i$ for the partition of $V(G)$ formed by the vertex sets of the trees of $f_i$, then $(\Pi_i,0 \le i \le n-1)$ is distributed as the (discrete time) Kingman's coalescent on a set of size $n$. 
As such, the above process expands the traditional definition of Kingman's coalescent to a collection of processes in which some coalescent events may be forbidden.

\begin{figure}
    \centering

    \begin{tikzpicture}[scale=0.7]
    \begin{scope}[every node/.style={circle,draw}]
    \node[scale=0.8] (1) at (-4,0) {1};
    \node[scale=0.8] (2) at (-2,-1) {2};
    \node[scale=0.8] (3) at (-6,-1) {3};
    \node[scale=0.8] (4) at (-3,-2) {4};
    \node[scale=0.8] (5) at (-5,-2) {5};
    \node[scale=0.8] (1') at (4,0) {1};
    \node[scale=0.8] (2') at (6,-1) {2};
    \node[scale=0.8] (3') at (2,-1) {3};
    \node[scale=0.8] (4') at (5,-2) {4};
    \node[scale=0.8] (5') at (3,-2) {5};
    \end{scope}
    
    \node [left=1em] at (3){$G$,$F_0$:};

    \draw[ultra thick] (1) -- (2);
    \draw[ultra thick] (1) -- (4);
    \draw[ultra thick] (4) -- (5);
    \draw[ultra thick] (4) -- (3);
    \draw[ultra thick] (1) -- (3);
    
    \node [left=1em] at (3'){$G$,$F_1$:};
    \draw [->,ultra thick] (3') -- (1');
    
    \draw[ultra thick] (1') -- (2');
    \draw[ultra thick] (1') -- (4');
    \draw[ultra thick] (4') -- (5');
    \draw[ultra thick] (4') -- (3');
    %\draw[ultra thick] (1') -- (3');
    \end{tikzpicture}

    \vspace{0.3cm}

    \begin{tikzpicture}[scale=0.7]
    \begin{scope}[every node/.style={circle,draw}]
    \node[scale=0.8] (1) at (-4,0) {1};
    \node[scale=0.8] (2) at (-2,-1) {2};
    \node[scale=0.8] (3) at (-6,-1) {3};
    \node[scale=0.8] (4) at (-3,-2) {4};
    \node[scale=0.8] (5) at (-5,-2) {5};
    \node[scale=0.8] (1') at (4,0) {1};
    \node[scale=0.8] (2') at (6,-1) {2};
    \node[scale=0.8] (3') at (2,-1) {3};
    \node[scale=0.8] (4') at (5,-2) {4};
    \node[scale=0.8] (5') at (3,-2) {5};
    \end{scope}
    
    \node [left=1em] at (3){$G$,$F_2$:};

    \draw[ultra thick] (1) -- (2);
    \draw[->,ultra thick] (4) -- (1);
    \draw[ultra thick] (4) -- (5);
    \draw[ultra thick] (4) -- (3);
    \draw[->,ultra thick] (3) -- (1);
    
    \node [left=1em] at (3'){$G$,$F_3$:};
    \draw [->,ultra thick] (3') -- (1');
    
    \draw[->,ultra thick] (1') -- (2') ;
    \draw[->,ultra thick] (4') -- (1');
    \draw[ultra thick] (4') -- (5');
    \draw[ultra thick] (4') -- (3');
    \draw[->,ultra thick] (3') -- (1');
    \end{tikzpicture}

    \vspace{0.3cm}

    \begin{tikzpicture}[scale=0.7]
    \begin{scope}[every node/.style={circle,draw}]
    \node[scale=0.8] (1') at (1,1.25) {1};
    \node[scale=0.8] (2') at (1,-0.5) {2};
    \node[scale=0.8] (3') at (-0.5,2.5) {3};
    \node[scale=0.8] (4') at (2.5,2.5) {4};
    \node[scale=0.8] (5') at (-1,0) {5};
    \end{scope}
    
    \node [left=2em,above=1.5em] at (5'){$F(G)$:};

    \draw[->,ultra thick] (1') -- (2');
    \draw[->,ultra thick] (4') -- (1');
    \draw[->,ultra thick] (3') -- (1');
    \end{tikzpicture}
    
    \caption{A realization of the Kingman coalescent on the graph $G$. The oriented edges are those in the forest $F_k$. After the third edge is added to $F_k$, there are no edges between the roots 2 and 5, so $F_k = F_3$ for all $k \geq 3$.}
    \label{fig:kingmanexample}
\end{figure}
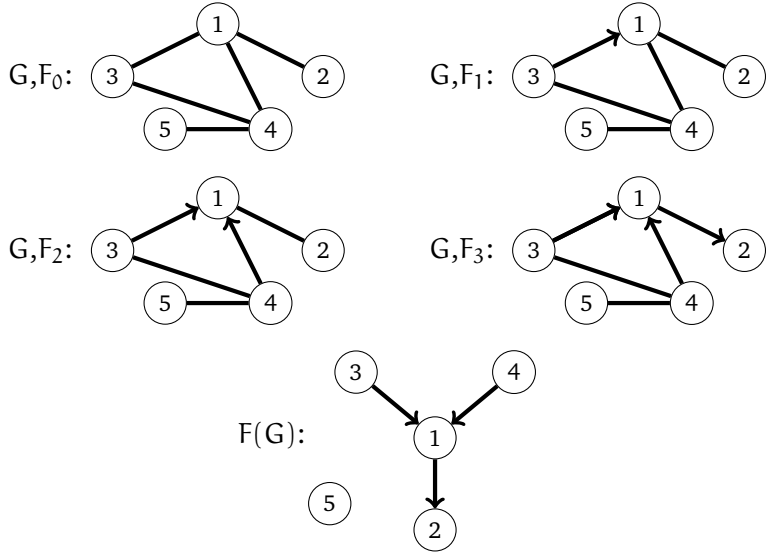

In this paper, we study the structure and number of trees of $F(G)$ when the underlying graph $G$ is an Erd\H{o}s-R\'enyi random graph $G_{n,p}$ for $p \in (0,1)$ fixed. Throughout the paper, we let $C_{n,p}$ denote the number of trees in $F(G_{n,p})$. Our main result is the following theorem.

\begin{theorem}\label{thm:trees}
    There exists a family of random variables $(C_p : p \in (0,1))$ such that
    \begin{enumerate}
        \item $C_{n,p} \convdist C_p$ and $\ex[C_{n,p}] \to \ex[C_p]$ as $n \to \infty$ for any fixed $p \in (0,1)$; and
        \item $\frac{p}{2(1-p)}C_p \convprob 1$ and $\frac{p}{2(1-p)}\ex[C_p] \to 1$ as $p \to 0$.
    \end{enumerate}
\end{theorem}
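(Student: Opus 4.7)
My plan leverages the correspondence between Kingman forests on $G_{n,p}$ and uniform random recursive trees (URRTs) announced in the abstract and developed in the paper: both $F(G_{n,p})$ and its tree count $C_{n,p}$ can be expressed as explicit functionals of a URRT on $[n]$ together with the edge indicators of $G_{n,p}$. This opens the door to a coupling argument across $n$.

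For part (i), I would construct a coupling on a common probability space by taking a URRT $T$ on $\N$ (built by the usual sequential rule: vertex $i\ge 2$ attaches to a uniform element of $[i-1]$) together with an independent Erd\H{o}s-R\'{e}nyi graph $G_{\infty,p}$ on $\N$, so that the restrictions to $[n]$ give a URRT on $[n]$ and $G_{n,p}$. Applying the URRT representation on each $[n]$ produces a family of forests whose tree counts are copies of $C_{n,p}$ on one probability space, and the candidate limit $C_p$ is the number of trees in the analogous forest built from $(T, G_{\infty,p})$ on $\N$. The two key facts to prove are that $C_p < \infty$ almost surely and that $C_{n,p} = C_p$ for all sufficiently large $n$. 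The first should follow from a Borel--Cantelli-type argument: once the early (high-degree) vertices of the URRT are in place, each subsequent vertex is absorbed into an earlier tree with probability approaching $1$ quickly enough that only finitely many ``new roots'' appear. The second follows because the URRT representation is local in the relevant sense, so the forest on $[n]$ agrees with the $\N$-forest on any fixed window for all $n$ large. Convergence of expectations is then obtained via uniform integrability, e.g.\ from a uniform second-moment bound on $(C_{n,p})$ using the URRT representation and the independence of edges in $G_{n,p}$.

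For part (ii), I would work directly with $C_p$ via its URRT representation. Writing $C_p$ as a sum of indicators --- one per URRT vertex that fails to be absorbed into any earlier tree --- and applying linearity gives $\ex[C_p]$ as an explicit series in $p$; matching leading-order terms should yield $\ex[C_p] = \tfrac{2(1-p)}{p}(1 + o(1))$ as $p\to 0$. A pairwise-correlation analysis of these indicators, controlling how their joint structure decays with distance in the URRT, would give $\var(C_p) = o(p^{-2})$, and Chebyshev's inequality then produces $\tfrac{p}{2(1-p)} C_p \convprob 1$. The corresponding expectation statement follows by bounding a higher moment of $C_p$ to establish uniform integrability of the rescaled sequence as $p \to 0$. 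An alternative route is to combine part (i) with the companion sparse-regime result announced in the abstract (where $p = p(n)\to 0$, $np\to\infty$) via a diagonal argument in $(n,p)$.

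The main obstacle is the almost-sure finiteness of $C_p$ together with the stabilization of $C_{n,p}$ under the coupling: the Kingman dynamics are global, since mergings depend on the entire current root set, so the URRT representation must be exploited carefully to obtain a sufficiently local description of ``being a new root.'' A secondary challenge is extracting the sharp $\tfrac{2(1-p)}{p}$ asymptotic in (ii), which requires tracking cancellations between competing low-order contributions from trees of different sizes in the URRT representation.
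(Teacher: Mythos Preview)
Your proposal rests on a misreading of the URRT connection. In the paper the relationship runs in the opposite direction: Lemma~\ref{lem:uniform}~(i) together with Lemma~\ref{lem:randomrecursiveforest} says that \emph{conditionally on} $C_{n,p}$, the forest $F(G_{n,p})$ has the structure of a uniform random recursive forest with $C_{n,p}$ trees; indeed, in the coupling used in Section~\ref{sec:discussion} one samples a URRT $T_n$ and $C_{n,p}$ \emph{independently} and then deletes the first $C_{n,p}-1$ edges of $T_n$. So the URRT carries no information about $C_{n,p}$, and there is no ``explicit functional of a URRT on $[n]$ together with the edge indicators of $G_{n,p}$'' that recovers $C_{n,p}$. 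The heuristic you describe---processing vertices sequentially and counting those that fail to be ``absorbed into an earlier tree''---does not match the Kingman dynamics, which merge uniformly chosen pairs of roots connected in $G_{n,p}$; when a queried pair is a non-edge, a \emph{different} pair of roots is merged, altering the root set in a way that is not captured by post-processing a fixed URRT. Consequently your coupling across $n$, your Borel--Cantelli argument for $C_p<\infty$, and your indicator decomposition for (ii) all lack a valid representation to act on.

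The paper instead analyses $C_{n,p}$ through the \emph{edge reveal process} of Section~\ref{sec:edgereveal}: one tracks the edge count walk $(M_k)$ in the complement process, so that $C_{n,p}\dist n-J^*+1$ with $J^*=\inf\{j:M_j\ge\binom{n-j}{2}\}$. The step distribution of $(M_k)$ (Lemma~\ref{lem:recursion}) yields both a monotonicity result $C_{m,p}\preceq C_{n,p}$ for $m\le n$ (Corollary~\ref{cor:monotonicityofkingman}) and quantitative concentration of $M_k$ around $\tfrac{(1-p)(n-k)}{p}$ (Lemma~\ref{lem:Mk_results}). Lemma~\ref{lem:Cnp_bounds} then gives tightness of $(C_{n,p})_n$ and the $\tfrac{2(1-p)}{p}$ asymptotic; Prokhorov plus monotonicity upgrades subsequential to full convergence, and monotone convergence handles expectations. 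Part~(ii) is immediate from (i) combined with Lemma~\ref{lem:Cnp_bounds}. None of this uses the URRT representation, which is reserved for the structural results in Theorem~\ref{thm:manyproperties}.
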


The main tools that we use in this proof also provide information in the case when $p \to 0$ as $n \to \infty$.

\begin{theorem}\label{thm:trees_small_p}
    Choose $p = p(n)$ such that $p \rightarrow 0$ and $np \rightarrow \infty$ as $n \rightarrow \infty$. Then $\frac{(1-p)}{2p}C_{n,p} \convprob 1$ and $\frac{(1-p)}{2p}\ex[C_{n,p}] \to 1$ as $n \to \infty.$
\end{theorem}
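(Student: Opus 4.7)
The plan is to deduce the statement from the URRT-based representation of the Kingman coalescent on $G_{n,p}$ established earlier in the paper and used to prove Theorem~\ref{thm:trees}, together with first- and second-moment estimates for $C_{n,p}$ that are quantitative jointly in $n$ and $p$. The same tools underlying the limit law $C_p$ and its $p\to 0$ asymptotics must be sharpened so that they continue to deliver tight estimates when $p=p(n)$ tends to $0$ along with $n\to\infty$. (Note: since $C_{n,p}\ge 1$ always, the stated normalization must be read as $\tfrac{p}{2(1-p)}C_{n,p}\convprob 1$, matching the normalization in Theorem~\ref{thm:trees}; the plan below targets that form.)

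The first step is a quantitative first-moment estimate. By the exchangeability of vertices under the joint law of $G_{n,p}$ and the Kingman coalescent, $\ex[C_{n,p}] = n\cdot\prob(1\text{ is a final root of }F(G_{n,p}))$, and the URRT representation should yield a computable expression (or sharp estimate) for this single-vertex survival probability with explicit dependence on both $n$ and $p$. The target is $\ex[C_{n,p}] = \frac{2(1-p)}{p}(1+o(1))$ as $n\to\infty$ with $p\to 0$ and $np\to\infty$; the hypothesis $np\to\infty$ is what kills boundary terms of the form $(1-p)^n$, leaving only the main contribution $\frac{2(1-p)}{p}$. This directly handles the convergence-of-expectations half of the theorem.

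The second step is to upgrade to convergence in probability via a variance bound of the form $\var[C_{n,p}] = o(p^{-2})$, after which Chebyshev's inequality finishes the job. Writing $C_{n,p} = \sum_{v\in[n]} X_v$ with $X_v = \1\{v\text{ is a final root}\}$ reduces the task to controlling the covariances $\mathrm{Cov}(X_u,X_v)$. In the most favourable case, the URRT coupling makes $X_u$ and $X_v$ nearly independent---conditional on the URRT, they depend on approximately disjoint collections of $\ber(p)$ edge indicators---so $\var[C_{n,p}] = O(\ex[C_{n,p}]) = O(1/p)$, comfortably below $p^{-2}$. If decoupling is not that clean, an Efron--Stein estimate or a Doob martingale revealing the edges of $G_{n,p}$ one-by-one should still deliver the required order.

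The main obstacle is this variance bound. The events $\{v \text{ is a final root}\}$ are coupled non-trivially by the coalescent dynamics: two candidate final roots can compete through overlapping trees, and this competition is not immediately visible from any simple representation. Separating the graph randomness (the $\ber(p)$ edge indicators) from the recursive-tree randomness via the URRT is what allows these covariances to be controlled, by counting short tree patterns in the URRT and applying percolation-style estimates on it; once that step is carried out, the remaining asymptotic analysis in the regime $p\to 0$, $np\to\infty$ is a routine exercise in geometric sums of the form $\sum_{i}(1-p)^i$.
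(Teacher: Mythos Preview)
Your plan rests on a misreading of what the URRT representation in the paper actually says. Lemma~\ref{lem:uniform} and Lemma~\ref{lem:randomrecursiveforest} tell you that the final forest is uniform over $\cF_{n,n-C_{n,p}}$ \emph{conditionally on} $C_{n,p}$; in the coupling spelled out in the proof of Theorem~\ref{thm:heightthm}, $C_{n,p}$ and the URRT $T_n$ are sampled \emph{independently}. So the URRT carries no information about $C_{n,p}$ whatsoever, and your proposed decomposition ``condition on the URRT, then analyse the $\ber(p)$ edge indicators to extract $\prob(1\text{ is a final root})$'' is circular: there is no residual percolation on the URRT that determines the root set. The identity $\ex[C_{n,p}]=n\,\prob(1\text{ is a final root})$ is of course true by exchangeability, but it is a tautology, not a computation --- you still need an independent handle on the right-hand side, and the URRT does not provide one. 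Your variance sketch inherits the same problem: the indicators $X_u,X_v$ do \emph{not} factor as functions of near-disjoint families of edge bits once you unwind the coalescent dynamics, and neither Efron--Stein nor an edge-exposure martingale comes with a usable Lipschitz bound here (flipping a single edge bit can shift the termination time, and hence $C_{n,p}$, by an unbounded amount).

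The paper's route is entirely different and bypasses moments of indicator sums. It works through the edge reveal process and tracks the \emph{edge count walk} $(M_k)$ of the complement process; $C_{n,p}$ is recovered from the hitting time $J^*=\inf\{j:M_j\ge \binom{n-j}{2}\}$. Lemma~\ref{lem:Mk_results} gives two-sided control on $M_k$ uniformly in $p\in(0,\eta)$, which feeds into Lemma~\ref{lem:Cnp_bounds}: for any $\epsilon,\delta$, once $K_{p,\epsilon}^-=\lfloor 2(1-\epsilon)(1-p)/p\rfloor$ is large enough, both $\big|\tfrac{p}{2(1-p)}\ex[C_{n,p}]-1\big|\le\delta$ and $\prob\big(|C_{n,p}-2(1-p)/p|\ge 2\epsilon(1-p)/p\big)\le\delta$ hold for all large $n$. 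Theorem~\ref{thm:trees_small_p} then follows in one line: when $p=p(n)\to 0$ with $np\to\infty$, we have $K_{p,\epsilon}^-\to\infty$, so the hypothesis of Lemma~\ref{lem:Cnp_bounds} is met for every $\epsilon,\delta$. No second-moment argument is needed; concentration is obtained directly from the tail bounds on $M_k$. (Your observation about the inverted normalisation in the theorem statement is correct.)
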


Our analysis of $\king(G_{n,p})$ uses a coupling with the Kingman coalescent on the complete graph, $\king(K_n)$, which is possible by the symmetry and independence of the existence of edges in $G_{n,p}$. This coupling, along with Theorem \ref{thm:trees}, allows us to deduce several structural properties of the resulting Kingman forests for the case when $p$ is fixed. We state the following theorem about the asymptotic behavior of tree sizes and heights as an illustrative example, but one could use the same connection to derive information about many other statistics.

In a rooted forest $F$, we define the height of a vertex $v$, $\height(v)$, to be its graph distance from the root of its tree. We set $\height(F) = \max_{v \in V(F)} \height(v)$, and call this quantity the height of the forest. For a fixed $k \geq 0$ and $\alpha_1,...,\alpha_k \in \N$, we say that a random vector $X = (X_1,...,X_k)$ has a Dirichlet distribution with parameters $\alpha_1,...,\alpha_k$, and write $X \dist \dirichlet(\alpha_1,...,\alpha_k)$ if it has a density function
    $$
    f_X(t_1,...,t_k) = \frac{\Gamma\left( \sum_{j=1}^k \alpha_j \right)}{\prod_{j=1}^k \Gamma(\alpha_j)}\prod_{j=1}^k x^{\alpha_j-1}
    $$
with respect to the Lebesgue measure on the unit simplex $(t_1,...,t_k)$, where $\Gamma$ denotes the standard gamma function.

\begin{theorem}\label{thm:manyproperties}
    For fixed $p \in (0,1)$, the following results hold.
    \begin{enumerate}
        \item Let $X_n = (|T_1|,...,|T_{C_{n,p}}|)$ be the sizes of the trees in $F(G_{n,p})$, listed in random order. Then, $(\frac{1}{n}X_n,C_{n,p}) \convdist (X,C_{p})$ as $n \to \infty$, where $C_{p}$ is the random variable from Theorem \ref{thm:trees} and, conditionally given $C_p$, $X$ has Dirichlet distribution with parameters $\alpha_1 = 1, \ldots, \alpha_{C_p} = 1$, i.e., $X$ has the uniform density $f_X(t_1,...,t_{C_p}) = \Gamma\left( C_p \right)\prod_{j=1}^{C_p} t_j^{0} = \Gamma\left( C_p \right)$ on the unit simplex.
        \item There exists $K > 0$ such that $|\ex[\height(F(G_{n,p}))] - e\log(n) + \frac{3}{2}\log\log(n)| \leq K$ for all $n$. Moreover, it holds that $\frac{\height(F(G_{n,p}))}{e\log(n)} \convprob 1$ as $n \to \infty$.
    \end{enumerate}
    
\end{theorem}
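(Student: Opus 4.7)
The plan is to leverage a coupling between $\king(G_{n,p})$ and $\king(K_n)$ developed earlier in the paper, which identifies (conditional on the vertex partition into trees) each tree of $F(G_{n,p})$ as an independent URRT on its vertex set. Exchangeability of $G_{n,p}$ moreover forces the partition itself, conditional on the multiset of tree sizes, to be uniform among all set partitions of $[n]$ with those block sizes. With this structural description in hand, both parts follow by transferring classical URRT results.

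For part (i), the first step is to combine the URRT coupling with exchangeability to show that, conditional on $C_{n,p}=k$, the tree sizes $(|T_1|,\ldots,|T_k|)$ listed in a uniform random order are distributed as a uniformly random composition of $n$ into $k$ positive parts. The second step invokes the standard fact that such a composition, rescaled by $1/n$, converges in distribution as $n\to\infty$ to the $\dirichlet(1,\ldots,1)$ law on the unit simplex. Combining this with $C_{n,p}\convdist C_p$ from Theorem~\ref{thm:trees} yields the claimed joint convergence.

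For part (ii), let $T_{\max}$ denote the largest tree of $F(G_{n,p})$ and set $M_n=|V(T_{\max})|$. Part (i) together with Theorem~\ref{thm:trees} implies that $M_n/n$ converges in distribution to a strictly positive limit, so $M_n=\Theta(n)$ with high probability. Since the URRT distribution depends only on the tree size, conditional on $M_n$ the tree $T_{\max}$ is a URRT on $M_n$ vertices. The classical results of Devroye and Pittel for URRT heights then give both $\ex[\height(T_{\max})\mid M_n] = e\log M_n - \tfrac{3}{2}\log\log M_n + O(1)$ uniformly in $M_n$, and $\height(T_{\max})/(e\log M_n)\convprob 1$; both transfer to the corresponding statements for $\height(T_{\max})$ since $\log M_n = \log n + O_P(1)$ and $\log\log M_n = \log\log n + o_P(1)$. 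The forest height $\height(F(G_{n,p}))$ equals $\max_i \height(T_i)$, and the matching upper bound follows by applying the uniform URRT height bound to each of the $C_{n,p}$ trees (each of size at most $n$) and using the stochastic boundedness of $C_{n,p}$ from Theorem~\ref{thm:trees}.

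The main obstacle I anticipate is converting the in-expectation URRT bound into the claimed uniform $O(1)$ bound on $\ex[\height(F(G_{n,p}))]$: this requires moment control on $\log(n/M_n)$ and on $C_{n,p}$, both of which should follow from Theorem~\ref{thm:trees} combined with the explicit composition distribution established in part (i). The in-probability convergence is more direct and requires only the distributional convergences already in hand.
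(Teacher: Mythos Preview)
Your approach to part (i) is essentially the paper's: the paper phrases it through the P\'olya urn connection (via Lemma~\ref{lem:uniform} and Lemma~\ref{lem:randomrecursiveforest}), but that is exactly the statement that conditional on $C_{n,p}=k$ the ordered tree sizes form a uniform composition of $n$ into $k$ positive parts, after which the Dirichlet limit is classical.

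For part (ii) your route genuinely differs from the paper's, and the paper's is both shorter and avoids the obstacle you flag. The paper does not analyse the trees of $F(G_{n,p})$ separately. Instead it observes that a $\urrf_k(n)$ is obtained from a single URRT $T_n$ by deleting the $k-1$ edges inside $\binom{[k]}{2}$; since $C_{n,p}$ is independent of the forest structure given its value, one can couple $F(G_{n,p})$ with $T_n$ so that, deterministically,
\[
\height(T_n)-C_{n,p}\ \le\ \height(F(G_{n,p}))\ \le\ \height(T_n).
\]
Both the $O(1)$ expectation bound and the in-probability convergence then follow in one line from $\sup_n\ex|\height(T_n)-e\log n+\tfrac32\log\log n|<\infty$ and $\sup_n\ex[C_{n,p}]<\infty$.

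Your argument is correct in outline but, as you anticipated, the $O(1)$ expectation bound for the upper tail is not free along your route. Bounding $\ex[\max_{i\le C_{n,p}}\height(T_i)]$ tree-by-tree requires tail estimates on URRT heights beyond the first-moment statement you cite (enough to control the expected maximum of a random number of independent copies), and you also need the conditional independence of the trees given their sizes to set up that comparison. All of this can be done, but the single-URRT sandwich above makes it unnecessary. Your lower bound through $T_{\max}$ and your in-probability argument are fine as stated.
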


\subsection{Outline of the sections}

In Section \ref{sec:background} we motivate the work done in this paper and cover some background information on coalescing graph processes. In Section \ref{sec:edgereveal} we introduce and study the edge reveal process, a coupling between the Kingman coalescent and the underlying random graph that is key to our study of the Kingman coalescent. Section \ref{sec:trees} is dedicated to proving Theorem \ref{thm:trees} and \ref{thm:trees_small_p}. In Section \ref{sec:discussion} we describe the aforementioned coupling with $\king(K_n)$, and prove Theorem \ref{thm:manyproperties}. In Section \ref{sec:bound} we provide proof of some bounds which are used in the proof of Theorems \ref{thm:trees} and \ref{thm:trees_small_p}. Section \ref{sec:openquestions} concludes the paper with some open questions and ideas for future research.

\subsection{Notation}

Before moving forward we pause to collect some notation. For $x,y \in \R$, we define $x \vee y := \max\{x,y\}$ and $x \wedge y := \min\{x,y\}$. The set $\N$ denotes the natural numbers with $0$ excluded, and $\Z_{\geq0} = \N \cup \{0\}$. For a set $S$ and $k \in \Z_{\geq0}$, we let $\binom{S}{k}$ denote the collection of all subsets of $S$ of size exactly $k$. For $k \in \N$, we write $[k]:=\{1,...,k\}$. For an undirected graph $G = (V,E)$ and a vertex $v \in V$, we define $\deg_G(v) = |\{e \in E : u \in e\}|$. We write $E(G)$ to refer to the edge set of a graph $G$ and $V(G)$ to refer to its vertex set. If we say ``$G$ is a graph on $V$'', we mean that $G$ is a graph with $V(G)=V$. For a graph $G=(V,E)$ and $e \in {V\choose 2}$, we write $G+e$ for the graph $(V,E\cup\{e\})$; if $e \in E$ then $G+e=G$. A rooted tree $t=(V(t),E(t))$ is {\em increasing} if $V(t)\subset \N$ and vertex labels increase along any root-to-leaf path (equivalently, if every non-root vertex's label is strictly larger than that of its parent. Finally, $\rho(F)$ is the set of all the roots of trees in the rooted forest $F$.

We use $\dist$ to denote distributional equality, $\convdist$ to denote convergence in distribution, and $\convprob$ to denote convergence in probability. For two random variables $X$ and $Y$, we say that $X$ stochastically dominates $Y$, writing $Y \preceq X$, if $\prob(X \geq x) \geq \prob(Y \geq x)$ for all $x \in \R$. For a finite set $S$, we say that $X \dist \unif(S)$ if for all $s \in S$, $\prob(X = s) = |S|^{-1}$. We say that $X$ is a geometric random variable with parameter $p$, and write $X \dist \geo(p)$, if $\prob(X = k) = (1-p)^kp$ for $k \in \Z_{\geq0}$. We say that $X$ has a negative binomial distribution with parameters $r \in \N$ and $p \in (0,1]$, and write $X \dist \negbin(r,p)$, if $\prob(X = k) = \binom{k+r-1}{k}(1-p)^kp^r$ for $k \in \Z_{\geq 0}$. We say that $X$ has a hypergeometric distribution with parameters $n \in \Z_{\geq 0}, m \in [n], k \in[n]$, and write $X \dist \hyper(k,m,n)$, if

    $$
    \prob(X = j) = \frac{\binom{m}{j}\binom{n-m}{k-j}}{\binom{n}{k}}.
    $$

Throughout the article, we let $\mathcal{F}_{n,k}$ be the set of rooted forests on $[n]$ with $k$ edges that are each given a unique label in $[k]$, such that edge labels decrease along all root-to-leaf paths. We let $\mathcal{G}_{n,k}$ denote the set of graphs on $[n]$ with $k$ edges. For $S \subseteq [n]$, we define $\cG_{S,k}$ to be the set of all graphs on $S$ with $k$ edges. 
We let $\cG_{n,k}^{(r)} = \cup_{S\subseteq[n] :|S|=r} \cG_{S,k}$ be all graphs with $k$ edges and a vertex set of size~$r$ drawn from the set $[n]$.

\begin{comment}
Let $(a_n)_{n=0}^\infty$ and $(b_n)_{n=0}^\infty$ be sequences in $\Z_{\geq0}$. We write,
\begin{enumerate}
    \setlength{\itemsep}{0pt}
    \item $a_n = o_n(b_n)$ if $a_n/b_n \to 0$ as $n \to \infty$,
    \item $a_n = \omega_n(b_n)$ if $a_n/b_n \to \infty$ as $n \to \infty$,
    \item $a_n = O_n(b_n)$ if there is a constant $C > 0$ such that $a_n \leq Cb_n$ for all $n \geq 0$,
\end{enumerate}
\end{comment}

\section{Background and motivations}\label{sec:background}

An \emph{$n$-coalescent} is a stochastic process $(P_k)_{k=0}^\infty$ consisting of partitions of $[n] = \{1 , ... , n\}$, where $P_0 = \{\{1\} , ... , \{n\}\}$ and $P_{k+1}$ is derived from $P_k$ by merging two distinct portions $A$ and $B$ with probability proportional to some function $\kappa(|A|,|B|)$. Three particularly well studied examples are $\kappa(x,y) = 1$, $\kappa(x,y) = x + y$, and $\kappa(x,y) = xy$. These choices are referred to as Kingman's coalescent, the additive coalescent, and the multiplicative coalescent respectively \cite{kingman1982coalescent,pitman1999coalescent,aldous1997brownian}.

The study of $n$-coalescent processes has motivations coming from across the sciences \cite{berestycki2009recent}. Some of the early mathematical work on coalescent models was due to Kingman \cite{kingman1982genealogy,kingman1982coalescent}, with motivation coming from the area of population genetics. Since then, coalescent processes have become part of the standard toolkit of population genetics for studying ancestral recombination graphs \cite{cousinsStructuredCoalescentModel2025, nielsenInferenceApplicationsAncestral2025}. For a second source of inspiration one can look towards statistical physics, where coalescent processes have naturally emerged within the study of spin glasses \cite{bolthausen1998ruelle,pitman1999coalescents,goldschmidt2005random}.

The three coalescents mentioned above are often viewed as a sequence of forests $(F_k)_{k=0}^\infty$ on the vertex set $[n]$ \cite{addario2015partition} with $F_0$ being $([n],\emptyset)$. For the Kingman coalescent, the sequence exactly corresponds to $\king(K_n)$. For the additive coalescent, we sample a uniform pair $(x,y)$ such that $y \in [n]$ and $x$ is the root of a tree in $F_k$ that does not contain $y$. Then, $F_{k+1}$ is formed by adding the edge $(x,y)$ to $F_k$, which results in $x$ no longer being a root. The multiplicative coalescent is typically seen as a sequence of unrooted forests where $F_{k+1}$ is derived from $F_k$ by adding a uniform edge to $F_k$ from among edges whose addition would not create a cycle.

Coalescent graph processes have frequently appeared in the random graph theory literature. Various versions of the Kingman coalescent \cite{kingman1982coalescent} have been used to study recursively growing random trees via direct distributional equivalences \cite{doyle1976linear,addario2018high,bellin2023uniform}. The additive coalescent has appeared naturally in the study of uniform trees, as the forest valued version of the process produces a tree that is distributed uniformly over all labelled trees \cite{pitman1999coalescent,addario2015partition}. The multiplicative coalescent has appeared in both the study of component sizes in critical Erd\"{o}s-R\'{e}nyi random graphs as well as the study of minimum spanning trees \cite{aldous1997brownian,addario2009critical,addario2017scaling}.

These three coalescents have all been studied in depth when there are no ``external'' constraints, in the sense that all mergers permitted by the coalescent rule in question are permitted. However, only a small amount of work has been put towards understanding the forests that emerge when we add the restriction that all edges must come from a set of allowed edges $E$, i.e., when we run the coalescents on an underlying graph $G$. For all three coalescents, the size and structure of the forests may be greatly affected by structure of the underlying graph, and this is a primary motivation for our investigation of the structure of $\king(G_{n,p})$.  There is some work on the structure of the multiplicative coalescent in non-complete geometries, due its connection with minimum spanning trees  \cite{addario2017scaling,addario2021geometry,MR3857861}. Thus far, we are unaware of any research into the structure of additive coalescents on non-complete graphs.

\section{The edge reveal process}\label{sec:edgereveal}

When $G \dist G_{n,p}$, there is a useful Markov chain which couples the construction of $\king(G)$ to a construction of $G$ itself. We call this coupling the \emph{edge reveal process}; we shall use it to analyse the number of trees in $\king(G)$. 

\subsection{Definition and distributional identities}

Let $B = \left\{B_{e} : e \in \binom{[n]}{2}\right\}$ be a collection of independent $\ber(p)$ random variables, so if $E = \left\{e \in \binom{[n]}{2}: B_e = 1\right\}$, then the graph $([n],E)$ is distributed like $G_{n,p}$. Independent of $B$, let $(e_k)_{k=1}^\infty$ be a sequence of independent $\unif \binom{[n]}{2}$ random variables. We call $B$ the \emph{bits} of the edge reveal process and the sequence $(e_k)_{k=0}^\infty$ the \emph{queried pairs} of the edge reveal process. We set $R_0 = [n]$, $F_0 = ([n] , \emptyset)$, and $G_0 = ([n],\emptyset)$. Then, for $k \geq 0$ we inductively define $(R_{k+1},F_{k+1},G_{k+1})$, and $L_{k+1}:E(F_{k+1}) \to \N$, as:
\begin{enumerate}
	\setlength{\itemsep}{0pt}
	\item If $B_{e_{k+1}} = 0$: set $(R_{k+1} , F_{k+1} , G_{k+1}) = (R_k , F_k , G_k)$.
	\item If $B_{e_{k+1}} = 1$ and $e_{k+1} \not\subseteq R_k$: Set $(R_{k+1} , F_{k+1} , G_{k+1}) = (R_k , F_k , G_k + e_{k+1})$. (Note that $e_{k+1}$ may already be in $G_k$; the graph does not change in this case).
	\item If $B_{e_{k+1}} = 1$ and $e_{k+1} \subseteq R_k$: let $O_{k+1} = (u_{k+1} , v_{k+1})$ be a uniformly random orientation of $e_{k+1}$. Define $(R_{k+1} , F_{k+1} , G_{k+1}) = (R_k \setminus u_{k+1} , F_{k} + O_{k+1} , G_k + e_{k+1})$ and let $L_{k+1}$ be such that $L_{k+1}(e_{j}) = |E(F_j)| + 1$ for all $1 \leq j \leq k+1$. (Note that with this labelling convention, the labelled forests $(F_k, L_k)$ are elements of $\cF_{n,|E(F_k)|}$).
\end{enumerate}
We write $(R_k,F_k,G_k)_{k=0}^\infty \dist \erp(n,p)$. This sequence is infinite, though once all pairs in $\binom{[n]}{2}$ have been queried, the sequence never changes. Let $\tau_0 = 0$, and let $\tau_k = \inf\{j > \tau_{k-1} : F_j \ne F_{j-1}\}$ for all $k \geq 0$ be the times when updates of type (iii) happen (note that these times can be infinite). The snapshots of the sequence $(R_k,F_k,G_k)$ at the times $\tau_0,...,\tau_{n-1}$ are the main points of interest. We call $\tau_0, ... , \tau_{n-1}$ the \emph{coalescing times} of $\erp(n,p)$.

\begin{lemma}\label{lem:erp=kingman}
    Let $(R_k,F_k,G_k)_{k=0}^\infty \dist \erp(n,p)$ and set $F_k^* = F_{\tau_k \wedge \tau_{k^*}}$ for all $0 \leq k \leq \infty$. Then, $(F_k^*)_{k=0}^{\infty} \dist \king(G_{n,p})$. 
\end{lemma}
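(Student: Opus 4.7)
The plan is to condition on the final revealed graph and reduce the lemma to a statement purely about the sequence of queried pairs. Let $G_\infty := \lim_{k \to \infty} G_k$. Since the i.i.d.\ uniform queries $(e_j)_{j \geq 1}$ exhaust $\binom{[n]}{2}$ almost surely, $G_\infty = \{e \in \binom{[n]}{2} : B_e = 1\}$ almost surely, and in particular $G_\infty \dist G_{n,p}$. It therefore suffices to show that, conditionally on $G_\infty = G$, the sequence $(F_k^*)_{k \geq 0}$ has the law of $\king(G)$.

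I would prove this conditional statement by induction on $k$. The key structural observation is that conditioning on $G_\infty = G$ determines all the bits $B$, but by the a priori independence of $B$ from $(e_j)_{j \geq 1}$, the future queries $(e_j)_{j > \tau_k}$ remain i.i.d.\ uniform on $\binom{[n]}{2}$ and independent of the history $\cH_k$ up to coalescing time $\tau_k$. Moreover, since each coalescing event removes an endpoint from $R$, every already-revealed edge in $G_{\tau_k}$ has at least one endpoint outside $R_{\tau_k}$, and hence no such edge can trigger a future merger. Consequently $\tau_{k+1}$ is precisely the first $j > \tau_k$ for which $e_j \in E(G) \cap \binom{R_{\tau_k}}{2}$. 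If this set is empty then $\tau_{k+1} = \infty$ and the process halts; but in that case $\rho(F_{\tau_k})$ is an independent set in $G$, so $\king(G)$ also halts at the same forest. Otherwise, by the elementary fact that the first sample of an i.i.d.\ uniform sequence to land in a fixed finite set is itself uniform on that set, the conditional distribution of $e_{\tau_{k+1}}$ given $\cH_k$ and $G_\infty = G$ is uniform on $E(G) \cap \binom{R_{\tau_k}}{2}$. Combined with the independent uniform orientation $O_{\tau_{k+1}}$, this is exactly the transition rule of $\king(G)$, completing the induction step; the base case $k = 0$ is immediate from $F_0^* = F_0 = ([n], \emptyset)$ and $R_0 = [n]$.

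The only real subtlety is being careful about the conditional independence: once we fix $G_\infty = G$ the bits are no longer random, but the queries remain i.i.d.\ uniform and, crucially, independent of the entire history up to $\tau_k$. Everything else is bookkeeping about which edges can still be ``active'' at time $\tau_k$, and the argument that the first element of an i.i.d.\ uniform sequence landing in a prescribed subset is uniform on that subset. I anticipate no significant obstacle; the main care needed is to phrase the inductive hypothesis so that it encodes both the forest process and the joint distribution of $(\cH_k, G_\infty)$ in a way that feeds cleanly into the next step.
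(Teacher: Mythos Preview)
Your proposal is correct and takes essentially the same approach as the paper: both arguments observe that, conditionally on the bit configuration $B$ (equivalently, on the graph $G_\infty = \{e : B_e = 1\} \dist G_{n,p}$), the edge added at the $(k+1)$st coalescing time is uniform on $E(G_\infty) \cap \binom{R_{\tau_k}}{2}$ with a uniform orientation, which is exactly the $\king$ transition rule. The paper's proof is a terse two-sentence version of your argument; your explicit treatment of the conditional independence of the queries from the bits, and your observation that previously revealed edges in $G_{\tau_k}$ can never have both endpoints in $R_{\tau_k}$, fill in details the paper leaves to the reader.
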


\begin{proof}
The forest $F_{\tau_{j+1} \wedge \tau_{k^*}}^*$ is obtained from $F_{\tau_{j} \wedge \tau_{k^*}}^*$ by adding a single new edge that is sampled uniformly from the set 
$$
\left\{e \in \binom{[n]}{2} : e \subseteq \rho(F_{\tau_{j-1}}) , \ B_e = 1\right\},
$$
with uniformly random orientation.
This is exactly the rule for how the edges are added for the Kingman coalescent. Since each bit $B_e$ is $\ber(p)$ distributed, we have $\big([n],\{ e \in \binom{[n]}{2} : B_e = 1 \}\big) \dist G_{n,p}$. Thus, $(F_k^*)_{k=0}^{\infty} \dist \king(G_{n,p})$.
\end{proof}

For the next lemma we introduce the \emph{complement process} of $\erp(n,p)$, which is the sequence $(G_k^*)_{k=0}^\infty$ given by $G^*_k = (\rho(F_k) , \{e_1 , ... , e_k\}\cap \binom{\rho(F_k)}{2} \setminus E(G_k))$. The vertex set of $G_k^*$ is the set of roots of $F_k$; its edges are exactly the pairs of roots that have been queried by time $k$ and are {\em not} in $G_k$. That is, all the edges in $G_k^*$ that have been ``verified'' to not be in the underlying graph by time $k$. We often refer to edges that have $B_e = 0$ (and thus, all edges in $G_k^*$ for any $k \geq 0$) as the \emph{non-edges} of the edge reveal process, and we see updates of type (i) as revealing a non-edge.

In Section \ref{sec:trees}, we use the sequence $(G_k^*)_{k=0}^\infty$ to study the number of trees in a Kingman forest of $G_{n,p}$. A very useful property for this purpose is that $F_k$ and $G_k^*$ are each uniformly random conditional on their number of edges. In particular, this implies that their structure is uniform at coalescing times.

\begin{lemma}\label{lem:uniform}
Let $(R_k,F_k,G_k)_{k=0}^\infty \dist \erp(n,p)$. Let
    $$
    \cS(m , \ell) = \left\{ (f,g) : f \in \cF_{n,m}, \ g \in \cG_{\rho(f),\ell} \right\}.
    $$
Then for any $0 \leq m + \ell \leq k$ and any $(f,g) \in \cS(m,\ell)$, we have 
$$
\prob\Big(F_k = f, \ G_k^* = g \ \big| \ |E(F_k)| = m, |E(G_k^*)| = \ell \Big) = \frac{1}{|\cS(m,\ell)|}.
$$
Thus,
    \begin{enumerate}
        \setlength{\itemsep}{0pt}
        \item for any $f \in \cF_{n,m}$ with $m \leq k$, $\prob( F_k = f \ | \ |E(F_k)| = m) = 1/|\cF_{n,m}|$; and 
        \item for any $g \in \cG_{n,m}^{(r)}$ with $m \leq k$, $\prob( G_k^* = g \ | \ |E(G_k^*)| = m , \ |R_k| = r) = 1/|\cG_{n,m}^{(r)}|$.
    \end{enumerate}
\end{lemma}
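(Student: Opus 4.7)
The plan is to prove the joint uniformity by induction on $k$, with the base case $k = 0$ immediate ($F_0$ and $G_0^*$ are both empty and $|\cS(0,0)| = 1$). The key structural observation driving the inductive step is that $(F_k, G_k^*)_{k \ge 0}$ is itself a Markov chain whose one-step transition probabilities, starting from a state $(f, g) \in \cS(m, \ell)$ with $R = \rho(f)$, depend only on the sizes $(m, \ell)$.

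The essential point is that any pair $e \in \binom{R}{2} \setminus E(g)$ must have been unqueried up to time $k$. Indeed, had $e$ been queried at some earlier time $j \le k$, then $e \subseteq R \subseteq \rho(F_{j-1})$, so $B_e = 0$ would put $e$ into $G_k^* = g$ (contradicting $e \notin E(g)$), while $B_e = 1$ would have been a type (iii) step dropping an endpoint of $e$ out of $\rho(F_k)$ (contradicting $e \subseteq R$). Consequently, whenever $e_{k+1} \in \binom{R}{2} \setminus E(g)$, the bit $B_{e_{k+1}}$ is a fresh $\ber(p)$: step $k+1$ is type (iii) with probability $p$ and type (i) adding $e_{k+1}$ to $G^*$ with probability $1 - p$. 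In all remaining cases ($e_{k+1} \in E(g)$ or $e_{k+1} \not\subseteq R$), the state is unchanged, so the transition law depends only on $(m, \ell)$.

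For a fixed target $(f, g) \in \cS(m, \ell)$, I would compute $\prob(F_{k+1} = f, G_{k+1}^* = g)$ by decomposing over predecessors into three disjoint cases: (A) the predecessor is $(f, g)$ itself, via a no-op step; (B) the predecessor is $(f - e^*, g'')$, where $e^* = (u,v)$ is the unique edge of $f$ with label $m$ (directed from child $u$ to root-parent $v$, as the label-$m$ edge is necessarily adjacent to a root), and $g''$ ranges over graphs on $R \cup \{u\}$ whose restriction to $\binom{R}{2}$ equals $g$ and which do not themselves contain $e^*$ (so that $B_{e^*}$ can still be fresh); or (C) the predecessor is $(f, g \setminus \{e\})$ for some $e \in E(g)$. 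The number of options in case (B) with exactly $j$ additional $u$-edges in $g''$ is $\binom{n-m-1}{j}$, and case (C) has $\ell$ options. Applying the inductive hypothesis to each $\prob(F_k = f', G_k^* = g')$ and pairing with the appropriate size-only transition factor (notably the factor $p/(2\binom{n}{2})$ per predecessor in case (B), where the $1/2$ encodes the forced orientation $(u,v)$, and $(1-p)/\binom{n}{2}$ in case (C)), the resulting sum yields a formula for $\prob(F_{k+1} = f, G_{k+1}^* = g)$ depending on $(f, g)$ only through $(m, \ell)$, which closes the induction.

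The main subtlety is the bookkeeping in case (B): one must correctly restrict $g''$ to exclude $e^*$ (otherwise $B_{e^*}$ would already be pinned to $0$, blocking the type (iii) transition) and sum $\binom{n-m-1}{j}$ over the valid range of $j$. Once the joint conditional uniformity is in hand, parts (i) and (ii) follow by marginalization: for (i), summing $\prob(F_k = f, G_k^* = g)$ over $g \in \cG_{\rho(f), \ell}$ and over $\ell$ uses that $|\cG_{\rho(f), \ell}| = \binom{\binom{n-m}{2}}{\ell}$ depends only on $(m, \ell)$; for (ii), summing over $f$ with $\rho(f) = V(g)$ and $|E(f)| = n - r$ uses vertex-relabeling symmetry of the distribution to conclude $|\{f \in \cF_{n, n-r} : \rho(f) = V(g)\}| = |\cF_{n, n-r}|/\binom{n}{r}$, whence $|\cG_{n, |g|}^{(r)}| = \binom{n}{r}\binom{\binom{r}{2}}{|g|}$ appears as the normalizing constant.
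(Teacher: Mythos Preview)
Your proposal is correct and follows essentially the same approach as the paper: induction on $k$ with a three-case decomposition over predecessors (no-op, last forest edge removed, last non-edge removed), showing that each contribution depends on $(f,g)$ only through $(m,\ell)$. Your Markov-chain framing and the explicit ``fresh bit'' observation for pairs in $\binom{R}{2}\setminus E(g)$ make the logic a bit cleaner than the paper's writeup (and your count $\binom{n-m-1}{j}$ in case (B) is in fact the right one; the paper's $\binom{n-m}{j}$ appears to be a harmless typo), but the structure of the argument is the same.
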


\begin{proof}

We prove the first identity via induction on $k$. Instead of showing it directly, we argue that $\prob(F_k = f , \ G_k^* = g) = \prob(F_k = \hat{f} , \ G_k^* = \hat{g})$ for any $(f,g),(\hat{f},\hat{g}) \in \cS(m,\ell)$ with $m,\ell \geq 0$ arbitrary. The base case is immediate as $F_0$ and $G_0^*$ are deterministic. Suppose that the identity holds for some $k \geq 0$, and let $f,g \in \cS(m,\ell)$ for some arbitrary $m,\ell \geq 0$. Let $(u,v)$ be the edge in $f$ of largest label. Consider the event $\{F_{k+1} = f, G_{k+1}^* = g\}$. Exactly one of the three following (disjoint) events must occur if $\{F_{k+1} = f, G_{k+1}^* = g\}$ is to occur:
    \begin{enumerate}
        \setlength{\itemsep}{0pt}
        \item $(F_k,G_k^*) = (f,g)$: In this case we have that $(F_{k+1},G_{k+1}^*) = (f,g)$ if and only if $e_{k+1} \in E(f) \cup E(g) \cup \binom{[n] \setminus \rho(f)}{2}$.
        \item $(F_k,G_k^*) = (f \setminus (u,v) , g')$, where $g'$ is a graph on the vertex set $\rho(f) \cup \{u\}$ and edge set $E(g) \cup S$ for $S \subseteq \{\{u,w\}:w \in \rho(f)\setminus \{u,v\}\}$: In this case we have that $(F_{k+1},G_{k+1}^*) = (f,g)$ if and only if $e_{k+1} = \{u,v\}$, $B_{\{u,v\}} =1$, and the orientation chosen for the edge is $O_{k+1} = (u,v)$.
        \item $(F_k,G_k^*) = (f,g \setminus e)$ for some $e \in E(g)$: In this case we have that $(F_{k+1},G_{k+1}^*) = (f,g)$ if and only if $e_{k+1} = e$ and $B_{e} = 0$.
    \end{enumerate}
By the definition of the edge reveal process, one can see that there are no other possibilities for $(F_k,G_k^*)$ which allow for $(F_{k+1},G_{k+1}^*) = (f,g)$ to occur. Denote the set of graphs $g'$ in (ii) by $S(g)$, and note that $|V(g')|=|V(g)\cup \{u\}|=m+1$ for all $g' \in S(g)$. Let
    \begin{align*}
    A(f,g) &= \{ (F_k,G_k^*) = (f,g) \} \bigcap \left\{e_{k+1} \in E(f) \cup E(g) \cup \binom{[n] \setminus \rho(f)}{2}\right\}, \\
    B(f,g) &= \bigcup_{g' \in S(g)}\big\{ (F_k,G_k^*) = (f \setminus (u,v) , g') \big\} \cap \big\{ e_{k+1} = \{u,v\} , \ B_{\{u,v\}} =1 , \ O_{k+1} = (u,v) \big\}, \\
    C(f,g) &= \bigcup_{e \in E(g)}\big\{ (F_k,G_k^*) = (f , g \setminus e) \big\} \cap \{ e_{k+1} = e , \ B_e = 0 \}
    \end{align*}
be the events from (i), (ii), and (iii). Then,
    $$
    \prob(F_k = f , \ G_k^* = g) = \prob(A(f,g)) + \prob(B(f,g)) + \prob(C(f,g)).
    $$
Via explicit counting, we  obtain the following identities:
    \begin{align*}
    \prob(A(f,g)) &= \prob(F_k = f, \ G_k^* = g)\prob\left( e_{k+1} \in E(f) \cup E(g) \cup \binom{[n] \setminus \rho(f)}{2} \right) \\
    &= \prob(F_k = f, \ G_k^* = g)\frac{|E(f)| + |E(g)| + \binom{n - |\rho(f)|}{2}}{\binom{n}{2}}, \\
    \prob(B(f,g)) &= \sum_{g' \in S(g)} \prob(F_k = f, \ G_k^* = g')\prob(e_{k+1} = \{u,v\} , \ B_{\{u,v\}} = 1, \ O_{k+1} = (u,v)) \\
    &= \sum_{g' \in S(g)}\prob(F_k = f, \ G_k^* = g') \cdot \frac{p}{2\binom{n}{2}}, \\
    \prob(C(f,g)) &= \sum_{e \in E(g)}\prob(F_k = f, \ G_k^* = g \setminus e)\prob(e_{k+1} = e, \ B_e = 0) \\
    &= |E(g)| \cdot \prob(F_k = f, \ G_k^* = g \setminus e) \cdot \frac{1-p}{\binom{n}{2}}.
    \end{align*}
From here, induction yields that $\prob(A(f,g)) = \prob(A(\hat{f},\hat{g}))$ and $\prob(C(f,g)) = \prob(C(\hat{f},\hat{g}))$ for any other pair $(\hat{f},\hat{g}) \in S(m,\ell)$. For $B(f,g)$, to see that $\prob(B(f,g)) = \prob(B(\hat{f},\hat{g}))$, we remark that there are exactly ${n-m \choose j}$ graphs $g' \in S(g)$ such that $|E(g')| = |E(g)|+j$ for all $0 \le j \le k-1$. Since this does not depend on the structure of $g$, only on its numbers of vertices and edges, we can apply the inductive hypothesis again to get that $\prob(B(f,g)) = \prob(B(\hat{f},\hat{g}))$. 

The first identity in the lemma follows from the fact that $\prob(F_k = f , \ G_k^* = g) = \prob(F_k = f' , \ G_k^* = g')$ for all $(f,g), \ (f',g') \in S(m,\ell)$. The second and third identities follow straightforwardly from the first and the fact that $\prob((F_k,G_k^*) \in S(m,\ell)) > 0$ if $m + \ell \leq k$.
\end{proof}

\subsection{Edge counts and monotonicity in the complement process}

Tracking the number of edges in the complement process of $\erp(n,p)$, which we do in Section \ref{sec:trees}, is a key part of the analysis of the number of trees in $F(G_{n,p})$. Write $N_k=|E(G_k^*)|$. Since $N_k$ is the number of pairs in ${R_k \choose 2}$ that have been verified to be non-edges by step $k$, if $N_k={R_k \choose 2}$ then the edge reveal process has terminated in the sense that $F_k = F_{j}$ for all $j \geq k$. Let $K^* = \inf\{ k \geq 0 : N_k \geq \binom{R_k}{2} \}$. We define $(M_k)_{k=0}^{n-1} := (N_{\tau_k \wedge K^*})_{k=0}^{n-1}$, which we call the \emph{edge count walk} of the edge reveal process. By Lemma \ref{lem:erp=kingman} and the discussion above, if $J^* := \inf\{ j \geq 0 : M_{j} \geq \binom{n-j}{2}\}$, then $n-J^*+1 \dist C_{n,p}$. The plus one appears because in the step where we terminate, a vertex does not get removed.

We now turn our attention to describing a single step, $M_{k+1}-M_k$, of the edge-count walk. First suppose that $\tau_k < \infty$. For all $0 \leq k \leq n-2$, let $S_k = \{\tau_k+1,...,K^* \wedge (\tau_{k+1})\}$ and let
    $$
    X_k = \left|\left\{ j \in S_k : e_j \in \binom{R_{\tau_k}}{2} , \ B_{e_j} = 0, \ e_j \notin \{ e_1 , ... , e_{j-1} \} \right\}\right|,
    $$
The random variable $X_k$ is precisely the number 
of pairs that are verified to be non-edges between times $\tau_k$ and $K^*\wedge \tau_{k+1}$, 
so if $|E(G_{\tau_k}^*)| < {n-k \choose 2}$ then $K^* \wedge \tau_{k+1}>\tau_k$ and  $X_k=|E(G_{K^*\wedge \tau_{k+1}^*-1})|-|E(G_{\tau_k}^*)|$. On the other hand, if $|E(G_{\tau_k}^*)| = {n-k \choose 2}$ then either $\tau_k=\infty$ or else $K^*=\tau_k$, and in either case $X_k=0$. Moreover, provided that $|E(G_{\tau_k}^*)| < {n-k \choose 2}$, by the definition of the process, the number of pairs that are verified to be non-edges between times $\tau_k$ and $K^*\wedge \tau_{k+1}$ is distributed as a $\geo(p)$ random variable truncated at ${n-k \choose 2}-|E(G_{\tau_k}^*)|$. 

Let $Y_k = \deg_{G_{(\tau_{k+1} - 1)}^*}(u_{k+1}) \1_{\{ \tau_{k+1} < \infty\}}$, where $u_{k+1}$ is the tail of the oriented edge $O_{k+1}$. When $\tau_{k+1} = \infty$, $Y_k$ is defined to be zero, and so the fact that the orientation $O_{k+1}$ doesn't exist is not a problem for the definition. When $\tau_k = \infty$ we set $X_k = Y_k = 0$. By the definition of the edge reveal process we have that $M_{k+1}= M_k + X_k - Y_k$ for all $0 \leq k \leq n-2$. 

\begin{lemma}\label{lem:recursion}
    For all $0 \leq k \leq n-2$ we have the following:
    \begin{enumerate}
        \item Conditionally given $M_k$, $X_k$ is a $\geo(p)$ random variable truncated at $\binom{n-k}{2}-M_k$.
        \item Conditionally given $M_k$ and $X_k$, $Y_k$ is distributed like
        \begin{equation}\label{eq:Mkjumps}
        \hyper\left(n-k-2 , M_k + X_k , \binom{n-k}{2} - 1\right)\1_{\{ M_k+X_k \leq \binom{n-k}{2}-1 \}}.
        \end{equation}
    \end{enumerate}
\end{lemma}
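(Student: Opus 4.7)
The plan is to analyze $X_k$ and $Y_k$ separately, exploiting the independence between the uniform queries $(e_j)$ and the bits $B$. The first thing I would establish is the key observation that any pair $e \in \binom{R_{\tau_k}}{2}$ that was queried at some step $j \leq \tau_k$ must have $B_e = 0$: both endpoints lie in $R_{\tau_k} \subseteq R_{j-1}$, so step $j$ would fall under case~(iii) of the edge reveal process if $B_e = 1$, removing one of those endpoints from $R_j$ and contradicting membership in $R_{\tau_k}$. Consequently $E(G_{\tau_k}^*)$ is exactly the set of root-pairs queried by time $\tau_k$, and the $\binom{n-k}{2} - M_k$ \emph{fresh} root-pairs (those in $\binom{R_{\tau_k}}{2} \setminus E(G_{\tau_k}^*)$) have $B$-values that, conditionally on the history up to $\tau_k$, are iid $\ber(p)$.

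Part~(i) then falls out quickly. After time $\tau_k$, each new query is uniform over $\binom{[n]}{2}$, and only queries landing on fresh root-pairs affect the edge-count walk or trigger $\tau_{k+1}$. By uniformity and independence, the sequence of fresh root-pair reveals is iid $\ber(p)$; the first reveal with $B=1$ triggers $\tau_{k+1}$, whereas revealing all $\binom{n-k}{2} - M_k$ fresh root-pairs as non-edges triggers $K^*$. Since $X_k$ counts reveals with $B = 0$ before either stopping event, $X_k \mid M_k$ is immediately seen to be $\geo(p)$ truncated at $\binom{n-k}{2} - M_k$.

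For part~(ii), I would first handle the boundary case: if $M_k + X_k = \binom{n-k}{2}$ then $K^* \leq \tau_{k+1}$ and $Y_k = 0$ by definition, matching the indicator in \eqref{eq:Mkjumps}. Assuming $M_k + X_k < \binom{n-k}{2}$, a short exchangeability argument based on the independence of the uniform random order of fresh root-pair queries and the iid $B$-values shows that, conditionally on $(M_k,X_k) = (m,x)$, the new edge $e_{\tau_{k+1}} = \{u,v\}$ is uniform over the $\binom{n-k}{2}-m$ fresh root-pairs at time $\tau_k$ and, given $\{u,v\}$, the $x$ newly revealed non-edges form a uniform $x$-subset of the remaining fresh root-pairs. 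Combined with Lemma~\ref{lem:uniform} applied to $G_{\tau_k}^*$, a direct counting check then yields that $\{u,v\}$ is marginally uniform over $\binom{R_{\tau_k}}{2}$ and that $E(G_{\tau_{k+1}-1}^*)$ is uniform over $(m+x)$-subsets of $\binom{R_{\tau_k}}{2} \setminus \{\{u,v\}\}$. Since $u_{k+1}$ is a uniform endpoint of $\{u,v\}$, the quantity $\deg_{G_{\tau_{k+1}-1}^*}(u_{k+1})$ counts how many of the $m+x$ uniformly chosen edges land among the $n-k-2$ pairs $\{u_{k+1},w\}$ with $w \in R_{\tau_k} \setminus \{u,v\}$, drawn from the $\binom{n-k}{2}-1$ pairs other than $\{u,v\}$; this is a hypergeometric count matching \eqref{eq:Mkjumps} via the symmetry $\hyper(k,m,n) \sim \hyper(m,k,n)$. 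The main obstacle I expect is justifying the exchangeability step cleanly, since $\tau_{k+1}-1$ is a random stopping time while Lemma~\ref{lem:uniform} is stated for deterministic indices; verifying that the uniformity is preserved when we additionally condition on $\tau_k=j$ for each $j$ is the bookkeeping that will require the most care.
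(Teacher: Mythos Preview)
Your proposal is correct and follows essentially the same route as the paper: part~(i) is argued from the iid $\ber(p)$ status of the fresh root-pair bits (the paper asserts this in the discussion just before the lemma), and part~(ii) is obtained by invoking Lemma~\ref{lem:uniform} to get the conditional uniformity of $G^*_{\tau_{k+1}-1}$ over graphs on $R_{\tau_k}$ with $M_k+X_k$ edges avoiding $\{u,v\}$, then reading off the degree of $u_{k+1}$ as a hypergeometric count. Your treatment is in fact more explicit than the paper's on two points---the justification of the truncated geometric in~(i), and the stopping-time subtlety in applying Lemma~\ref{lem:uniform} at $\tau_{k+1}-1$---both of which the paper handles by assertion.
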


\begin{proof}
The first identity was verified prior to the proof, so we only need to prove the second one. For this, note that that $\tau_{k+1} = \infty$ if and only if $X_k \geq \binom{n-k}{2} - M_k$ and that no vertex is removed from the complement process after time $\tau_k$ if $\tau_{k+1} = \infty$. Since $Y_k = \deg_{G_{(\tau_{k+1} - 1)}^*}(u_{k+1}) \1_{\{ \tau_{k+1} < \infty\}}$, this fact explains the indicator in (\ref{eq:Mkjumps}). When $X_k < \binom{n-k}{2} - M_k$, it holds that $\tau_{k+1} < \infty$. Letting $e_{\tau_{k+1}} =\{u,v\}$, by Lemma \ref{lem:uniform}, conditionally given $M_k$ and $X_k$, $G_{(\tau_{k+1} - 1)}^*$ is distributed uniformly over the set
    $$
    \left\{g \in \cG_{n,M_k+X_k}^{(n-k)} : \{u,v\} \notin g\right\}.
    $$
From here, recalling the well known fact that the degree of a typical vertex in a graph drawn uniformly from $\cG_{n,k}^{(r)}$ has a $\hyper(k,n-1,\binom{n}{2})$ distribution justifies the first factor in (\ref{eq:Mkjumps}), as we are conditioning on the edge $\{u_{k+1},v_{k+1}\}$ to not be in $G_{(\tau_{k+1} - 1)}^*$.
\end{proof}

Using the relationship between $M_k,X_k,$ and $Y_k$ that is derived in Lemma \ref{lem:recursion} we can show that the edge count walk is monotone with respect to $n$.

\begin{lemma}\label{lem:monotonicityofedgecount}
    Let $m \leq n$, and let $(M_k^{(n)})_{k=0}^{n-1}$ and $(M_k^{(m)})_{k=0}^{m-1}$ be the edge count walks for two edge reveal processes, $\erp(n,p)$ and $\erp(m,p)$ respectively. Then, $M_k^{(m)} \preceq M_{k + (n-m)}^{(n)}$ for all $0 \leq k \leq m-1$.
\end{lemma}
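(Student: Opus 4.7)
The plan is to prove the lemma by induction on $k$, at each step building a coupling of the two edge-count walks that preserves the pointwise ordering $M_k^{(m)} \leq M_{k+(n-m)}^{(n)}$ almost surely. By Lemma \ref{lem:recursion}, each walk $(M_k)$ is a Markov chain whose one-step transition at index $k$ depends only on the current value $M_k$ and on $n-k$. Writing $k' := k + (n-m)$, one has $n - k' = m - k$, so at the compared indices both chains share the same structural parameter $r := m-k$; this is what makes the step-by-step comparison natural. The base case $k = 0$ is immediate since $M_0^{(m)} = 0$.

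For the inductive step, assume a coupling with $M := M_k^{(m)} \leq M_{k'}^{(n)} =: M'$ almost surely. I first couple the ``$X$'' increments by drawing a single $Z \sim \geo(p)$ and setting $X := Z \wedge (\binom{r}{2} - M)$ and $X' := Z \wedge (\binom{r}{2} - M')$. Since $M \leq M'$, one has $X \geq X'$, and whenever $X$ saturates (attains $\binom{r}{2} - M$) so does $X'$. Three cases arise. If both saturate, then $M_{k+1}^{(m)} = \binom{r}{2} = M_{k'+1}^{(n)}$. If only $X'$ saturates, then $M_{k'+1}^{(n)} = \binom{r}{2}$ while $M_{k+1}^{(m)} = M + X - Y \leq M + X < \binom{r}{2}$, so the inequality is immediate. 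The substantive case is when neither saturates: then $X = X' = Z$ and, writing $E := M + Z \leq M' + Z =: E'$, I must couple $Y \sim \hyper(r-2, E, \binom{r}{2}-1)$ and $Y' \sim \hyper(r-2, E', \binom{r}{2}-1)$ so that $Y' - Y \leq E' - E$, which will give $M_{k+1}^{(m)} = E - Y \leq E' - Y' = M_{k'+1}^{(n)}$.

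This hypergeometric coupling is the main obstacle, but it admits a clean combinatorial solution. I plan to sample $Y$ from $\hyper(r-2, E, \binom{r}{2}-1)$ by uniformly choosing a set $S$ of size $E$ from $\binom{r}{2} - 1$ positions (of which $r-2$ are ``marked'' as incident to $u_{k+1}$), and setting $Y := |S \cap \mathrm{marked}|$. Then I sample a set $T$ of size $E' - E$ uniformly from the $\binom{r}{2} - 1 - E$ positions outside $S$, set $\Delta := |T \cap \mathrm{marked}|$, and define $Y' := Y + \Delta$. A standard exchangeability check shows that $S \cup T$ is uniform over subsets of size $E'$, so $Y'$ has the correct hypergeometric marginal; and $\Delta \leq |T| = E' - E$ by construction. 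The degenerate situations where $M > \binom{r}{2}$ owing to absorption at an earlier step are handled separately and easily: in such a case the $m$-process is frozen with $X = Y = 0$, so the inductive inequality transfers directly. Combining all cases completes the induction and yields the desired stochastic domination.
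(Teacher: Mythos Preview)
Your proof is correct and follows essentially the same approach as the paper: induction on $k$, a shared geometric $Z$ to couple the $X$-increments, and a hypergeometric coupling giving $Y'-Y \le E'-E$ for the $Y$-increments (the paper isolates this last step as Lemma~\ref{lem:hypergeo_dom}, which is exactly your urn argument with the roles of ``draws'' and ``successes'' swapped).

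One small case is missing from your analysis: the $n$-process may be \emph{already} frozen entering the step, i.e.\ $M' \ge \binom{r}{2}$ while $M < \binom{r}{2}$. In that situation your formula $X' = Z \wedge (\binom{r}{2}-M')$ is not valid (it would be nonpositive), so the ``only $X'$ saturates'' case does not cover it. The paper handles this explicitly; it is trivial, since then $M_{k'+1}^{(n)} = M' \ge \binom{r}{2} \ge M+X \ge M_{k+1}^{(m)}$.
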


In the proof of the above lemma, we use the following property about hypergeometric random variables.

\begin{lemma}\label{lem:hypergeo_dom}
    $X \dist \hyper(k,m,n)$ and $Y \dist \hyper(k,m',n)$, where $0 \leq m \leq m' \leq n$. Then, $m - X \preceq m' - Y$.
\end{lemma}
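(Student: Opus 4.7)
The plan is to prove the stochastic domination by exhibiting an explicit coupling of $X$ and $Y$ on a common probability space under which the almost-sure pointwise inequality $m - X \leq m' - Y$ holds. Intuitively, $m - X$ counts the number of ``success'' items left behind after drawing a sample of size $k$ from a population of $n$ containing $m$ successes; enlarging the success set from $m$ to $m'$ items while drawing the same sample can only leave at least as many successes behind, since each newly-added success that escapes the sample increases the count on the right.

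To make this precise I would partition $[n]$ into three colour classes: a red set $R = [m]$ of size $m$, a blue set $B = \{m+1, \ldots, m'\}$ of size $m'-m$, and a white set $W = \{m'+1, \ldots, n\}$ of size $n-m'$. Let $S$ be a uniformly random subset of $[n]$ of size $k$, and define on this common probability space
\[
\tilde{X} = |S \cap R|, \qquad \tilde{Y} = |S \cap (R \cup B)| = \tilde{X} + |S \cap B|.
\]
Since $|R| = m$ and $|R \cup B| = m'$, the marginal distributions are $\tilde{X} \dist \hyper(k,m,n)$ and $\tilde{Y} \dist \hyper(k,m',n)$, matching those of $X$ and $Y$ respectively. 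Therefore it suffices to establish the lemma with $(X,Y)$ replaced by $(\tilde{X},\tilde{Y})$.

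Finally, in this coupling one has $\tilde{Y} - \tilde{X} = |S \cap B| \leq |B| = m' - m$, whence
\[
(m' - \tilde{Y}) - (m - \tilde{X}) = (m' - m) - (\tilde{Y} - \tilde{X}) \geq 0
\]
deterministically. Thus $m - \tilde{X} \leq m' - \tilde{Y}$ pointwise, which gives $\prob(m' - Y \geq t) \geq \prob(m - X \geq t)$ for every $t \in \R$, i.e., $m - X \preceq m' - Y$. There is no real obstacle here beyond setting up the coupling cleanly; one could alternatively give a direct combinatorial comparison of the probability mass functions of $m-X$ and $m'-Y$, but the coupling is shorter and more transparent.
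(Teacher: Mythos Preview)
Your proof is correct and is essentially identical to the paper's own argument: the paper also couples $X$ and $Y$ by colouring $m$ balls red, $m'-m$ blue, and the rest black (your ``white''), drawing $k$ without replacement, and letting $X$ count red and $Y$ count red-or-blue, then observing $Y-X\le m'-m$. The only differences are cosmetic (labelling the colour classes explicitly as subsets of $[n]$ and spelling out the final inequality in slightly more detail).
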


\begin{proof}
Consider a population of $n$ balls, with $m$ coloured red, $m'-m$ coloured blue, and the rest coloured black. Draw $k$ balls from the population without replacement and let $X$ be the number of red balls drawn and $Y$ the number of red or blue balls drawn. Then, $Y - X \leq m'-m$, and so $m - X \leq m' - Y$.
\end{proof}

\begin{proof}[Proof of Lemma \ref{lem:monotonicityofedgecount}]
    Let the coalescing times of the respective processes be $(\tau_k^{(n)})_{k=0}^{n-1}$ and $(\tau_k^{(m)})_{k=0}^{m-1}$. The argument proceeds via induction, with the base case being immediate from the fact that $M_{0}^{(m)} = 0$ deterministically. Suppose that $M_{k}^{(m)} \preceq M_{k + (n-m)}^{(n)}$ for some $0 \leq k \leq m-2$. Let $(X_k^{(n)},Y_k^{(n)})_{k=0}^{n-1}$ and $(X_k^{(m)},Y_k^{(m)})_{k=0}^{m-1}$ be defined as before Lemma \ref{lem:recursion} for $\erp(n,p)$ and $\erp(m,p)$ respectively. 
    
    First suppose that $\tau_{k+(n-m)}^{(n)} = \infty$. In this case, we have $M_{k+1+(n-m)}^{(n)} = M_{k+(n-m)}^{(n)} \geq \binom{m-k}{2}$. If $\tau_k^{(m)} = \infty$, then we can, by induction, take a coupling such that $M_{k+1}^{(m)} = M_{k}^{(m)} \leq M_{k+(n-m)}^{(n)} = M_{k+1+(n-m)}^{(n)}$. If $\tau_k^{(m)} < \infty$, then we necessarily have $M_{k+1}^{(m)} \leq \binom{m-k}{2} \leq M_{k+1+(n-m)}^{(n)}$.
    
    Now assume $\tau_{k+(n-m)}^{(n)} < \infty$ (and hence, $\tau_k^{(m)} < \infty$ as well). Recall from Lemma \ref{lem:recursion} that, under this conditioning, $X_k^{(m)}$ is distributed like $G \wedge (\binom{m-k}{2} - M_{k}^{(m)})$ and $X_{k+(n-m)}^{(n)}$ like $G \wedge (\binom{m-k}{2} - M_{k+(n-m)}^{(n)})$ for $G \dist \geo(p)$ sampled independently of $M_k^{(m)}$ and $M_{k+(n-m)}^{(n)}$. Take some coupling where $M_{k}^{(m)} \leq M_{k + (n-m)}^{(n)}$ and let $G \dist \geo(p)$ be independent. Then we have
    \begin{align*}
        &M_{k + (n-m)}^{(n)} + X_{k+(n-m)}^{(n)} \\
        =& M_{k}^{(m)} + (M_{k + (n-m)}^{(n)} - M_{k}^{(m)}) + G \wedge \left(\binom{m-k}{2} - M_{k}^{(m)} - (M_{k + (n-m)}^{(n)} - M_{k}^{(m)})\right) \\
        \geq& M_{k}^{(m)} + (M_{k + (n-m)}^{(n)} - M_{k}^{(m)}) + G \wedge \left(\binom{m-k}{2} - M_{k}^{(m)}\right) - (M_{k + (n-m)}^{(n)} - M_{k}^{(m)}) \\
        =& M_{k}^{(m)} + X_k^{(m)},
    \end{align*}
    proving that $M_{k}^{(m)} + X_k^{(m)} \preceq M_{k + (n-m)}^{(n)} + X_{k+(n-m)}^{(n)}$.
    
    If $\tau_{k+1}^{(m)} = \infty$, then $\tau_{k+1+(n-m)}^{(n)} = \infty$ as well and so $Y_k^{(m)} = Y_{k + (n-m)}^{(n)} = 0$, which would complete the proof. If $\tau_{k+1+(n-m)}^{(n)} = \infty$ and $\tau_{k+1}^{(m)} < \infty$, then we can similarly finish the proof immediately as $Y_k^{(m)} \geq Y_{k + (n-m)}^{(n)} = 0$ in this case. Hence, we can suppose that $\tau_{k+1}^{(m)},\tau_{k+1+(n-m)}^{(n)} < \infty$. In this case, conditionally given $M_k^{(m)}$, $M_{k+(n-m)}^{(n)}$, $X_k^{(m)}$, and $X_{k+(n-m)}^{(n)}$, we have that $Y_{k+(n-m)}^{(n)} \dist \hyper(m-k-2 , M_{k+(n-m)}^{(n)} + X_{k+(n-m)}^{(n)} , \binom{m-k}{2})$ and $Y_k^{(n)} \dist \hyper(m-k-2 , M_{k}^{(m)} + X_k^{(m)} , \binom{m-k}{2})$. Since $M^{(n)}_{k+1+(n-m)} = (M_{k+(n-m)}^{(n)} + X_{k+(n-m)}^{(n)}) - Y_{k+(n-m)}^{(n)}$ and $M^{(m)}_{k+1} = (M_{k}^{(m)} + X_{k}^{(m)}) - Y_{k}^{(m)}$, the result then follows from Lemma \ref{lem:hypergeo_dom}.
\end{proof}

Recall the fact, stated in discussion at the start of this subsection, that $C_{n,p} \dist n-J^* + 1$, where $J^* := \inf\{ j \geq 0 : M_{j} \geq \binom{n-j}{2}\}$. Combining these definitions with the above lemma directly implies an important monotonicity result for the Kingman coalescent on $G_{n,p}$.

\begin{corollary}\label{cor:monotonicityofkingman}
    Let $n \geq m \geq 0$. Then, $C_{m,p} \preceq C_{n,p}$.
\end{corollary}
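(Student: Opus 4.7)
The corollary should follow directly by translating the event $\{C_{n,p} \geq c\}$ into an event about the edge count walk $(M_j^{(n)})$, and then applying Lemma \ref{lem:monotonicityofedgecount}. My plan is as follows.

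First I would unpack the characterization of $C_{n,p}$ given in the discussion preceding Lemma \ref{lem:recursion}: writing $J^{*,(n)} = \inf\{j \geq 0 : M_j^{(n)} \geq \binom{n-j}{2}\}$, we have $C_{n,p} \dist n - J^{*,(n)} + 1$, and analogously $C_{m,p} \dist m - J^{*,(m)} + 1$. I would next observe that $\{J^{*,(n)} \leq j\} = \{M_j^{(n)} \geq \binom{n-j}{2}\}$. The nontrivial inclusion is $\subseteq$: if the process terminates at some step $j_0 \leq j$, then $M_{j_0}^{(n)} \geq \binom{n-j_0}{2}$ by definition of $J^{*,(n)}$; since $M_\cdot^{(n)}$ stays constant after time $J^{*,(n)}$ and the threshold $\binom{n-j}{2}$ is nonincreasing in $j$, we get $M_j^{(n)} = M_{j_0}^{(n)} \geq \binom{n-j_0}{2} \geq \binom{n-j}{2}$.

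Combining these two observations, for any integer $c$ with $1 \leq c \leq m+1$, the event $\{C_{n,p} \geq c\}$ coincides with $\{M_{n-c+1}^{(n)} \geq \binom{c-1}{2}\}$, and likewise $\{C_{m,p} \geq c\} = \{M_{m-c+1}^{(m)} \geq \binom{c-1}{2}\}$. Setting $k = m-c+1$, so that $k + (n-m) = n-c+1$, Lemma \ref{lem:monotonicityofedgecount} gives $M_{m-c+1}^{(m)} \preceq M_{n-c+1}^{(n)}$, hence
\[
\prob(C_{m,p} \geq c) = \prob\!\left(M_{m-c+1}^{(m)} \geq \tbinom{c-1}{2}\right) \leq \prob\!\left(M_{n-c+1}^{(n)} \geq \tbinom{c-1}{2}\right) = \prob(C_{n,p} \geq c).
\]
For $c \leq 1$ the inequality $\prob(C_{m,p} \geq c) \leq \prob(C_{n,p} \geq c)$ is immediate, and for $c > m+1$ we have $\prob(C_{m,p} \geq c) = 0$ since $F(G_{m,p})$ has at most $m$ trees, so in either case the inequality is trivial.

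There is really no obstacle here beyond bookkeeping: the genuine work is encapsulated in Lemma \ref{lem:monotonicityofedgecount}, and the only subtlety is verifying that the ``stopped'' edge count walk persists in the stopping region, which uses the monotonicity of the thresholds $\binom{n-j}{2}$.
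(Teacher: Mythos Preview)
Your proposal is correct and follows exactly the approach the paper intends: the paper states that the corollary follows ``directly'' from the identity $C_{n,p}\dist n-J^*+1$ and Lemma~\ref{lem:monotonicityofedgecount}, and you have supplied precisely those details. Your verification that $\{J^{*,(n)}\le j\}=\{M_j^{(n)}\ge\binom{n-j}{2}\}$ (via the observation that $M_k$ freezes once the threshold is hit, combined with monotonicity of $\binom{n-j}{2}$ in $j$) is the only nontrivial bookkeeping step, and it is handled correctly.
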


\section{The number of trees in $F(G_{n,p})$}\label{sec:trees}

In this section, we prove Theorems \ref{thm:trees} and \ref{thm:trees_small_p}. By definition, obtaining results on the number of trees is essentially equivalent to obtaining results on $J^*$, which requires some bounds on $(M_k)_{k=0}^{n-1}$. We postpone the proofs until Section \ref{sec:bound}, though we record the results now. For the rest of the paper, we introduce the notation $K_{p,\epsilon}^{-} = \lfloor\frac{2(1-\epsilon)(1-p)}{p}\rfloor$ and $K_{p,\epsilon}^+ = \lceil \frac{2(1+\epsilon)(1-p)}{p}\rceil$.
\begin{lemma}\label{lem:Mk_results}
 For any $\eta,\epsilon,\delta \in (0,1)$, there exist $C,L,c > 0$ such that the following hold:
\begin{enumerate}
    \setlength{\itemsep}{1em}
    \item  Fix $p \in (0,\eta)$ and integers $n$ and $\ell$ such that $n \geq \ell \geq L \vee K_{p,\epsilon}^+$. Then,
        $$
        \prob\left( \bigcup_{k = 1}^{n-\ell} \left\{ M_k \geq (1+\epsilon)\frac{(1-p)(n-k)}{p} \right\} \right) \leq Ce^{-c\ell}.
        $$
    \item For any $n \geq 0$ and any $p \in (0,\eta)$ such that $K_{p,\epsilon}^- \geq L$, we have,
    $$
        \prob\left(M_{n-K_{p,\epsilon}^-} \leq \binom{K_{p,\epsilon}^-}{2}\right)  \leq \delta + \frac{1}{2\epsilon}\left( \frac{K_{p,\epsilon}^+-1}{n-1} \right).
    $$
\end{enumerate}
\end{lemma}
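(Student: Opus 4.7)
The plan is to analyze the edge-count walk $(M_k)$ via the drift extracted from Lemma \ref{lem:recursion}. Writing $r = n-k$ and using the factorization $\binom{r}{2} - 1 = (r-2)(r+1)/2$, one obtains the clean identity $\ex[Y_k \mid M_k, X_k] = 2(M_k+X_k)/(r+1)$, hence
\[
\ex[M_{k+1}\mid M_k, X_k] \;=\; \frac{r-1}{r+1}(M_k + X_k).
\]
Setting $v = (1-p)/p$ and treating $\ex[X_k]\approx v$ (truncation only decreases $X_k$), iterating this recursion telescopes to the closed form $\mu_k := \ex[M_k] = vk(n-k)/(n+1)$. The drift is linear in $M_k$ with equilibrium $m^*(r) = vr/2$, corresponding to the ODE solution $M(r) = vr(1 - r/n)$.

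For Part (i), let $\alpha_k = (1+\epsilon)v(n-k)$, which is twice the equilibrium and coincides with $\binom{n-k}{2}$ at $n-k = K_{p,\epsilon}^+$. On $\{M_k \ge \alpha_k\}$ the drift is
\[
\ex[M_{k+1}-M_k\mid M_k] \;\le\; v - \frac{2\alpha_k}{r+1} \;\approx\; -(1+2\epsilon)v,
\]
a strong downward push of order $-v$ (not merely $-\epsilon v$, since $\alpha_k$ lies well above the equilibrium). I would combine this with a two-regime split of the union $\bigcup_{k=1}^{n-\ell}\{M_k\ge \alpha_k\}$. In the small-$k$ regime (roughly $k \le n/2$, so that $\alpha_k \ge kv$), use the crude bound $M_k \le \sum_{j<k}X_j \preceq \sum_{j<k}\geo(p)$ and Chernoff for sums of i.i.d.\ geometrics. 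In the large-$k$ regime, use the strong negative drift to build an exponential supermartingale $\exp(\lambda(M_k-\alpha_k)^+)$ for small $\lambda>0$, with its one-step moment-generating function bounded via $\geo(p)$ and hypergeometric tails. A union bound over $k\in[1,n-\ell]$, using that both regimes contribute at most $Ce^{-c\ell}$, then yields the claim.

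For Part (ii), the plan is a second-moment (Chebyshev) argument at $k = n-K_{p,\epsilon}^-$. A direct calculation gives
\[
\mu_{n-K_{p,\epsilon}^-} - \binom{K_{p,\epsilon}^-}{2} \;=\; \frac{K_{p,\epsilon}^-}{2(n+1)}\bigl[\,n(2v - K_{p,\epsilon}^- + 1) - K_{p,\epsilon}^-(2v+1) + 1\bigr],
\]
and the inequality $K_{p,\epsilon}^- \le 2(1-\epsilon)v$ shows the leading term is of order $\epsilon v K_{p,\epsilon}^-$ with an error proportional to $(K_{p,\epsilon}^+ - 1)/(n-1)$. Coupling $M_k$ to an unrestricted walk $\tilde M_k$ (built with an un-truncated $\geo(p)$ in place of $X_k$ and the analogous hypergeometric $\tilde Y_k$) so that $\tilde M_k \ge M_k$ until termination, one has $\{M_{n-K_{p,\epsilon}^-}\le\binom{K_{p,\epsilon}^-}{2}\}\subseteq\{\tilde M_{n-K_{p,\epsilon}^-}\le\binom{K_{p,\epsilon}^-}{2}\}$. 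A variance bound of order $v^2(n-k)$ for $\tilde M_k$, derived from a second-moment recursion analogous to the first-moment one (using the explicit hypergeometric variance), combines with Chebyshev's inequality: taking $L$ large enough that the Chebyshev ratio is at most $\delta$ produces the first summand, while the explicit $(K_{p,\epsilon}^+-1)/(2\epsilon(n-1))$ term accounts for the slack in the gap estimate above.

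The main obstacle is rigorously bounding $\mathrm{Var}(\tilde M_k)$ through the iterated second-moment recursion: both $X_k$ and $Y_k$ contribute variance that depends on $M_k$ (the hypergeometric variance is state-dependent), so clean control requires carefully cancelling higher-order terms using the same kind of factoring identity $(r-2)(r+1) = r^2 - r - 2$ that made the first-moment recursion exact. Truncation effects near the termination boundary are a secondary technical hurdle, which the coupling to $\tilde M_k$ is specifically designed to sidestep.
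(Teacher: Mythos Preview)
Your approach to Part (i) is genuinely different from the paper's. The paper does not split on $k$; instead it decomposes the bad event as a union of ``excursion'' events $A_{k,j}$ (the walk enters the band $[(1+\epsilon/2)v(n-k),(1+\epsilon)v(n-k)]$ at step $k$ and exits above at step $k+j$), and splits on the excursion length $j$ relative to a threshold $T_k = \epsilon(n-k)/(4(1+\epsilon))$. Short excursions are controlled by $M_{k+j}-M_k \preceq \negbin(j,p)$; long ones by an explicit coupling to independent $X^*_{k+i}\sim\geo(p)$ and $Y^*_{k+i}$ hypergeometrics with parameter pinned at the bottom of the band, followed by Chernoff-type bounds from Lemma~\ref{lem:manybounds}. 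Your supermartingale idea is plausible in spirit but you have not shown that $\exp(\lambda(M_k-\alpha_k)^+)$ is actually a supermartingale: since $\alpha_k$ is strictly decreasing in $k$, the relevant drift is that of $M_k-\alpha_k$, which at the barrier is only of order $-\epsilon v$, not $-(1+2\epsilon)v$ as you compute for $M_k$ alone. Making this rigorous, and extracting specifically an $e^{-c\ell}$ bound from the union over $k\le n-\ell$, requires more than you have written.

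For Part (ii) there is a concrete error. You construct $\tilde M_k$ with untruncated geometrics so that $\tilde M_k\ge M_k$ before termination, and then claim $\{M_{n-K^-_{p,\epsilon}}\le\binom{K^-_{p,\epsilon}}{2}\}\subseteq\{\tilde M_{n-K^-_{p,\epsilon}}\le\binom{K^-_{p,\epsilon}}{2}\}$. This is backwards: from $\tilde M\ge M$ one gets $\{\tilde M\le a\}\subseteq\{M\le a\}$, so Chebyshev on $\tilde M$ yields only a \emph{lower} bound on $\prob(M\le a)$, which is useless here. To upper-bound $\prob(M\le a)$ via a coupling you would need a process below $M$, and untruncating the geometrics goes the wrong way. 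The paper sidesteps both this issue and your ``main obstacle'' (the variance recursion) entirely: it never computes $\var(M_k)$. Instead it proves the first-moment lower bound $\ex[M_{n-\ell}]\ge (1-\epsilon)v\,\ell(1-(\ell-1)/(n-1))$ directly from the recursion (controlling the truncation error $\ex[p^{B_k}]$ via Part (i)), and then applies the elementary inequality
\[
\prob(X\le a)\ \le\ \frac{1}{b-a}\Big(b+\ex[X:X\ge b]-\ex[X]\Big)
\]
with $a=(1-\epsilon)vK^+_{p,\epsilon}$ and $b=(1+\epsilon\delta)vK^+_{p,\epsilon}$, bounding the upper-tail contribution $\ex[M:M\ge b]$ again using Part (i). This first-moment route is what produces the explicit $\frac{1}{2\epsilon}\cdot\frac{K^+_{p,\epsilon}-1}{n-1}$ term, and it avoids the state-dependent variance calculation altogether.
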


A brief computation shows that $\frac{(1+\epsilon)(1-p)(n-k)}{p}$ crosses above $\binom{n-k}{2}$ around the value $k$ for which $n-k = K_{p,\epsilon}^+$. Thus, Lemma \ref{lem:Mk_results} tells us that $J^*$ is likely to be between 
$K_{p,\epsilon}^-$ and $K_{p,\epsilon}^+$ for $n$ large. Using Lemma \ref{lem:Mk_results} we can make this intuition into a quantitative result.

\begin{lemma}\label{lem:Cnp_bounds}
    Let $p = p(n) < 1$ be such that $np \to \infty$ as $n \to \infty$ and $\limsup_{n \to \infty} p(n) < 1$. For any $\delta, \epsilon \in (0,1)$, there exists $L \geq 0$ such that, if $\liminf_{n \to \infty} K_{p,\epsilon}^- \geq L$, then
    $$
    \limsup_{n \to \infty}\left| \frac{p\ex[C_{n,p}]}{2(1-p)} - 1 \right| \leq \delta,
    $$
    and
    $$
    \limsup_{n \to \infty}\prob\left( \left|C_{n,p} - \frac{2(1-p)}{p}\right| \geq \frac{2\epsilon(1-p)}{p} \right) \leq \delta.
    $$
\end{lemma}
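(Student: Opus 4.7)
The plan is to exploit the identity $C_{n,p} \dist n - J^* + 1$, where $J^* = \inf\{j \geq 0 : M_j \geq \binom{n-j}{2}\}$, and use the two halves of Lemma \ref{lem:Mk_results} to sandwich $J^*$ (and hence $C_{n,p}$). The reason $K_{p,\epsilon}^\pm$ are the correct thresholds is that the envelope $(1+\epsilon)(1-p)(n-k)/p$ appearing in Lemma \ref{lem:Mk_results}(i) falls below $\binom{n-k}{2}$ precisely when $n - k$ exceeds $2(1+\epsilon)(1-p)/p + 1$, which is $K_{p,\epsilon}^+ + O(1)$. Thus the two-sided information on $(M_k)$ provided by Lemma \ref{lem:Mk_results} transfers directly into two-sided control on the hitting time $J^*$.

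For the upper-direction claims, I would apply Lemma \ref{lem:Mk_results}(i) with an auxiliary parameter $\epsilon' < \epsilon \wedge \delta$. The hypothesis forces $p \to 0$ along the sequence, so for $n$ large $K_{p,\epsilon}^+ \geq K_{p,\epsilon'}^+ + 2$, and the inclusion $\{J^* \leq n-\ell\} \subseteq \{\exists k \leq n - \ell : M_k \geq (1+\epsilon')(1-p)(n-k)/p\}$ is valid for every $\ell \geq K_{p,\epsilon'}^+ + 1$. This yields the exponential tail $\prob(C_{n,p} > \ell) \leq Ce^{-c\ell}$ for all such $\ell$. Applying it at $\ell = K_{p,\epsilon}^+ - 1$ kills the upper tail: $\prob(C_{n,p} \geq K_{p,\epsilon}^+) = o(1)$. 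Integrating the tail from $\ell = K_{p,\epsilon'}^+ + 1$ upward gives $\ex[C_{n,p}] \leq K_{p,\epsilon'}^+ + O(1)$, so $\limsup_n p\,\ex[C_{n,p}]/(2(1-p)) \leq 1 + \epsilon' \leq 1 + \delta$.

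For the lower-direction claims I would apply Lemma \ref{lem:Mk_results}(ii). With the given $\epsilon$ and its $\delta$-argument set to $\delta/2$, and using that $\{C_{n,p} \geq K_{p,\epsilon}^- + 1\} \supseteq \{M_{n-K_{p,\epsilon}^-} > \binom{K_{p,\epsilon}^-}{2}\}$, one obtains $\prob(C_{n,p} < K_{p,\epsilon}^- + 1) \leq \delta/2 + \frac{K_{p,\epsilon}^+ - 1}{2\epsilon(n-1)}$, whose second term is $O(1/(\epsilon\,np)) = o(1)$ since $np \to \infty$. Combined with the upper-tail bound, a union bound gives the probability statement. For the expectation lower bound, re-running part (ii) with an auxiliary $\epsilon''$ much smaller than $\delta$ and $\delta$-argument $\delta/2$, and then applying Markov, yields $\ex[C_{n,p}] \geq (K_{p,\epsilon''}^- + 1)(1 - \delta/2 - o(1))$, so $\liminf_n p\,\ex[C_{n,p}]/(2(1-p)) \geq (1 - \epsilon'')(1 - \delta/2) \geq 1 - \delta$ for $\epsilon''$ chosen small enough. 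The main obstacle is essentially bookkeeping: the auxiliary parameters $\epsilon'$ and $\epsilon''$ must be chosen small enough in terms of $\delta$ to absorb the $O(1)$ slack between $(1+\epsilon)(1-p)(n-k)/p$ and $\binom{n-k}{2}$ as well as the $\epsilon$-width of the envelope, and the threshold $L$ must be enlarged in terms of $\epsilon$ and $\delta$ so that the hypothesis $\liminf K_{p,\epsilon}^- \geq L$ propagates to each of the auxiliary quantities $K_{p,\epsilon'}^+$ and $K_{p,\epsilon''}^-$.
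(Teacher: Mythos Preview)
Your proposal is correct and follows essentially the same route as the paper: translate to $J^*$ via $C_{n,p}\dist n-J^*+1$, use Lemma~\ref{lem:Mk_results}(i) together with the observation that $\binom{n-k}{2}\ge (1+\epsilon')(1-p)(n-k)/p$ once $n-k\ge K_{p,\epsilon'}^{+}+1$ to control the upper tail and integrate it for the expectation upper bound, and use Lemma~\ref{lem:Mk_results}(ii) plus $K_{p,\epsilon}^{+}/n\to 0$ (from $np\to\infty$) for the lower tail and expectation lower bound. The only cosmetic difference is that the paper applies Lemma~\ref{lem:Mk_results} with the given $\epsilon,\delta$ and invokes ``freedom of choice'' at the end, whereas you introduce the auxiliary $\epsilon',\epsilon''$ up front---both amount to the same bookkeeping.
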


\begin{proof}
    Since $C_{n,p} \dist n-J^*+1$, we have
\begin{equation}\label{eq:expec_C_and_J_star}
\ex[C_{n,p}-1] = \sum_{k=0}^{n-1} \prob(C_{n,p} \geq k+1) = \sum_{k=0}^{n-1} \prob\left( J^* \leq n-k \right).
\end{equation}
Using the definition of $J^*$, we obtain
\begin{equation}\label{eq:lower_bound_for_Cnp_expectation}
    \ex[C_{n,p}-1] \geq \sum_{k=1}^{K_{p,\epsilon}^-} \prob\left( \bigcup_{j=0}^{n-k}\left\{ M_{j} \geq \binom{n-j}{2}\right\} \right) \geq K_{p,\epsilon}^-\prob\left(M_{n-K_{p,\epsilon}^-} \geq \binom{K_{p,\epsilon}^-}{2} \right).
\end{equation}
By Lemma \ref{lem:Mk_results} (ii), there exists $L_1 \geq 0$ such that for any $n$ and $p$ satisfying $K_{p,\epsilon}^- \geq L_1$ we have
\begin{equation}\label{eq:lower_bound_for_Mk_in_Cnp_proof}
\prob\left( M_{n-K_{p,\epsilon}^-} \geq \binom{K_{p,\epsilon}^-}{2} \right)\geq 1-\delta - \frac{1}{2\epsilon}\left(\frac{K_{p,\epsilon}^+-1}{n-1}\right).
\end{equation}
Note that, by our assumptions on $n$ and $p$, it holds that $\lim_{n \to \infty} K_{p,\epsilon}^-n^{-1} = 0$. Using this fact along with (\ref{eq:lower_bound_for_Cnp_expectation}) gives
\begin{equation*}
\liminf_{n \to \infty} \frac{\ex[C_{n,p}-1]}{K_{p,\epsilon}^-} \geq \liminf_{n \to \infty}\left(1-\delta - \frac{1}{2\epsilon}\left(\frac{K_{p,\epsilon}^+-1}{n-1}\right)\right) = (1-\delta),
\end{equation*}
whenever $\liminf_{n \to \infty} K_{p,\epsilon}^- \geq L_1$. By increasing $L_1$ if needed, using the definition of $K_{p,\epsilon}^-$, we obtain the bound,
\begin{equation}\label{eq:liminf_Cnp}
    \liminf_{n \to \infty} \frac{2(1-p)\ex[C_{n,p}]}{p} \geq (1 - \delta)^2(1-\epsilon).
\end{equation}
On the other hand, from (\ref{eq:expec_C_and_J_star}) and the definition of $C_{n,p}$, we also have
\begin{equation*}
    \ex[C_{n,p}-1] \leq K_{p,\epsilon}^+ + \sum_{k = K_{p,\epsilon}^++1}^{n-1} \prob(C_{n,p} \geq k+1) = K_{p,\epsilon}^+ +  \sum_{k = K_{p,\epsilon}^++1}^{n-1} \prob\left( M_{n-k} \geq \binom{k}{2} \right).
\end{equation*}
Since, for $k \geq K_{p,\epsilon}^+ +1$ we have
\begin{equation}\label{eq:no_idea_for_label_name}
\binom{k}{2} \geq \frac{1}{2}k K_{p,\epsilon}^+ = \frac{(1+\epsilon)(1-p)k}{p},
\end{equation}
we can apply Lemma \ref{lem:Mk_results} (i) to obtain $c,C,L_2 \geq 0$ such that
\begin{align*}
\ex[C_{n,p}-1] &\leq K_{p,\epsilon}^+ +  C\sum_{k = K_{p,\epsilon}^++1}^n e^{-ck}
\end{align*}
for $K_{p,\epsilon}^+ \geq L_2$. Evaluating the sum on the right side we obtain an $L_3 > 0$ such that, when $K_{p,\epsilon}^+ \geq L_3$,
\begin{align*}
    C\sum_{k = K_{p,\epsilon}^++1}^n e^{-ck} \leq \delta K_{p,\epsilon}^+.
\end{align*}
Hence, for $K_{p,\epsilon}^+ \geq L_2 \vee L_3$ we have
\begin{equation*}
\limsup_{n \to \infty} \frac{\ex[C_{n,p}-1]}{K_{p,\epsilon}^+} \leq (1+\delta).
\end{equation*}
As before, we may increase $L_2$ or $L_3$ in order to obtain the bound,
\begin{equation}\label{eq:limsup_Cnp}
\limsup_{n \to \infty} \frac{2(1-p)\ex[C_{n,p}]}{p} \leq (1+\delta)^2(1+\epsilon).
\end{equation}
Given our freedom over the choice of $\delta$ and $\epsilon$, the first result of Lemma \ref{lem:Cnp_bounds} follows straightforwardly from combining (\ref{eq:liminf_Cnp}) with (\ref{eq:limsup_Cnp}).

The second result follows from a similar approach. Recalling (\ref{eq:no_idea_for_label_name}), we have
$$
\limsup_{n \to \infty}\prob\left(C_{n,p} > K_{p,\epsilon}^++1\right) \leq \limsup_{n \to \infty}\prob\left(\bigcup_{k=K_{p,\epsilon}^++1}^n \left\{M_{n-k} \geq \frac{(1+\epsilon)(1-p)k}{p}\right\} \right).
$$
Then, by Lemma \ref{lem:Mk_results} (i), we obtain $c',C',L_4 > 0$ such that, for $n \geq \ell \geq L_4$,
$$
\limsup_{n \to \infty}\prob\left(\bigcup_{k=K_{p,\epsilon}^++1}^n \left\{M_{n-k} \geq \frac{(1+\epsilon)(1-p)k}{p}\right\} \right) \leq C'e^{-c'K_{p,\epsilon}^+}.
$$
Since $K_{p,\epsilon}^+ \geq K_{p,\epsilon}^-$, there exists $L_5 > 0$ such that when $K_{p,\epsilon}^- \geq L_5$ we have $C'e^{-c'K_{p,\epsilon}^+} \leq \delta$, which establishes the claimed upper bound. For the lower bound we use (\ref{eq:lower_bound_for_Mk_in_Cnp_proof}) along with the aforementioned fact that $\lim_{n \to \infty} K_{p,\epsilon}^-n^{-1} = 0$ to get the existence of a constant $L_1 \geq 0$ such that, when $\liminf_{n \to \infty}K_{p,\epsilon}^- \geq L_1$,
\begin{align*}
\limsup_{n \to \infty}\prob(C_{n,p} \leq K_{p,\epsilon}^- + 1 ) &\leq \limsup_{n \to \infty} \prob\left(M_{n-K_{p,\epsilon}^-} \leq \binom{K_{p,\epsilon}^-}{2}\right) \\
&\leq \limsup_{n \to \infty} \left(\delta + \frac{1}{2\epsilon}\left(\frac{K_{p,\epsilon}^+-1}{n-1}\right)\right) = \delta. 
\end{align*}
Taking $K_{p,\epsilon}^- \geq L_1 \vee L_4 \vee L_5$ we obtain the desired result.
\end{proof}

Equipped with Lemma \ref{lem:Cnp_bounds}, Theorems \ref{thm:trees} and \ref{thm:trees_small_p} follow without too much extra effort. We restate Theorem \ref{thm:trees} for reference:

\begin{theorem*}
    There exists a family of random variables $(C_p : p \in (0,1))$ such that
    \begin{enumerate}
        \item $C_{n,p} \convdist C_p$ and $\ex[C_{n,p}] \to \ex[C_p]$ as $n \to \infty$ for any fixed $p \in (0,1)$; and
        \item $\frac{p}{2(1-p)}C_p \convprob 1$ and $\frac{p}{2(1-p)}\ex[C_p] \to 1$ as $p \to 0$.
    \end{enumerate}
\end{theorem*}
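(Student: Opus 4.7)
My plan is three steps: use the stochastic monotonicity of Corollary~\ref{cor:monotonicityofkingman} to extract a distributional limit $C_p$; upgrade this to $L^1$ convergence via a uniform-in-$n$ bound on $\ex[C_{n,p}]$; then transfer the $p\to 0$ asymptotics of Lemma~\ref{lem:Cnp_bounds} through the $n\to\infty$ limit. For the first step, Corollary~\ref{cor:monotonicityofkingman} says that $\prob(C_{n,p}\ge k)$ is non-decreasing in $n$ for every fixed $k \ge 1$, so the limits $a_k(p):=\lim_n\prob(C_{n,p}\ge k)$ exist in $[0,1]$ and are non-increasing in $k$. They are therefore the tail function of a (possibly improper) $\Z_{\geq 0}$-valued random variable $C_p$; by Strassen's theorem the $C_{n,p}$ can be coupled on a common probability space as a pointwise non-decreasing sequence with almost-sure limit $C_p$, giving $C_{n,p} \convdist C_p$ automatically.

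The key quantitative step is the uniform bound $\sup_n\ex[C_{n,p}]<\infty$ for fixed $p \in (0,1)$. I would mirror the upper-bound calculation inside the proof of Lemma~\ref{lem:Cnp_bounds}, using the identity $\prob(C_{n,p}\ge k+1)=\prob(M_{n-k}\ge\binom{k}{2})$ established there (which holds because once $M_j$ crosses $\binom{n-j}{2}$ the edge reveal process freezes, so the first-passage event coincides with $\{M_{n-k}\ge\binom{k}{2}\}$), together with the observation $\binom{k}{2}\ge(1+\epsilon)(1-p)k/p$ for $k\ge K_{p,\epsilon}^+ + 1$. Then Lemma~\ref{lem:Mk_results}(i) applied with some $\eta \in (p,1)$ and $\epsilon \in (0,1)$ yields constants $C,c,L>0$ independent of $n$ with $\prob(M_{n-k}\ge\binom{k}{2})\le Ce^{-ck}$ for $k\ge L\vee K_{p,\epsilon}^+$. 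Summing the geometric tail gives $\sup_n\ex[C_{n,p}]\le 1+(L\vee K_{p,\epsilon}^+)+C/(1-e^{-c})<\infty$; combined with the non-decreasing coupling from the first paragraph, monotone convergence delivers $\ex[C_{n,p}]\uparrow\ex[C_p]<\infty$, so $C_p$ is a.s.\ finite and part (i) is proved.

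For part (ii), fix $\delta,\epsilon\in(0,1)$ and let $L$ be the threshold from Lemma~\ref{lem:Cnp_bounds}. Since $K_{p,\epsilon}^-\to\infty$ as $p\to 0$, for all sufficiently small $p$ the lemma's hypothesis $\liminf_n K_{p,\epsilon}^-\ge L$ holds, and passing $n\to\infty$ in its expectation conclusion (using $\ex[C_{n,p}]\to\ex[C_p]$ from (i)) yields $|p\ex[C_p]/(2(1-p))-1|\le\delta$. For the probability statement I would exploit that $C_{n,p}\convdist C_p$ with both integer-valued, so $\prob(C_{n,p}=k)\to\prob(C_p=k)$ for every $k$ and hence $\prob(C_{n,p}\in A)\to\prob(C_p\in A)$ for every $A\subseteq\Z_{\geq 0}$ by Scheff\'e's theorem; the $\limsup$ bound of Lemma~\ref{lem:Cnp_bounds} then transfers to $\prob(|C_p-2(1-p)/p|\ge 2\epsilon(1-p)/p)\le\delta$. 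Sending $\delta,\epsilon\to 0$ yields both halves of (ii). The only genuinely quantitative step in the whole proof is the uniform expectation bound in the middle paragraph; the rest is soft (monotonicity, Strassen's coupling, convergence in distribution on $\Z_{\geq 0}$).
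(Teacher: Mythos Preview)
Your proposal is correct and follows essentially the same route as the paper: both arguments hinge on the stochastic monotonicity of Corollary~\ref{cor:monotonicityofkingman} to produce the limit, the quantitative input from Lemma~\ref{lem:Mk_results}/Lemma~\ref{lem:Cnp_bounds} to show that the limit is proper with finite expectation, and monotone convergence for the $L^1$ statement; part (ii) is in both cases just a passage to the $n\to\infty$ limit in Lemma~\ref{lem:Cnp_bounds}.

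The one cosmetic difference is how the distributional limit is extracted. The paper first argues tightness from the probability bound in Lemma~\ref{lem:Cnp_bounds}, invokes Prokhorov to get a subsequential limit, and then uses monotonicity to upgrade to full convergence. You instead use monotonicity directly---since $k\mapsto\prob(C_{n,p}\ge k)$ is non-decreasing in $n$, the pointwise limits $a_k(p)$ exist without any compactness argument---and then verify properness via $\sup_n\ex[C_{n,p}]<\infty$. Your route is slightly more elementary (Prokhorov is overkill for integer-valued variables with monotone tails) and makes the Strassen coupling explicit, but the substance is identical: the same exponential tail bound from Lemma~\ref{lem:Mk_results}(i) does all the real work in both proofs.
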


\begin{proof}[Proof of Theorem \ref{thm:trees}]
We only prove (i), as (ii) can be easily proven by simply combining (i) with the previous lemma. By applying the second result in Lemma \ref{lem:Cnp_bounds} we see that, for fixed p, the sequence $(C_{n,p})_{n=0}^\infty$ is a tight family of random variables. By Prokhorov's Theorem, there is some subsequence $(C_{n_k,p})_{k=0}^\infty$ and some random variable $C_p$ on $\N$ such that $C_{n_k,p} \convdist C_p$ as $k \to \infty$ \cite{billingsley2013convergence}. The monotonicity from Corollary \ref{cor:monotonicityofkingman} combined with the subsequential convergence implies that $C_{n,p} \convdist C_p$ as $n \to \infty$.

Since the sequence $(C_{n,p})_{n=1}^\infty$ is such that $C_{m,p} \preceq C_{n,p}$ for all $0 \leq m \leq n$ by Corollary \ref{cor:monotonicityofkingman}, it follows from the monotone convergence theorem that $\E[C_{n,p}] \to \E[C_p]$ as $n \to \infty$. To see this, for each $n \geq 0$ and $k \geq 1$, let $q_{n,k} = \prob(C_{n,p} \geq k)$ and set $q_k = \prob(C_p \geq k)$. Let $U \dist \unif[0,1]$ and define random variables $(X_n)_{n=0}^\infty$ and $X$ as follows:
$$
X_n = \sum_{k = 1}^\infty k\1_{\{ q_{n,k} \leq U < q_{n,k+1} \}}, \ X = \sum_{k = 1}^\infty k\1_{\{ q_{k} \leq U < q_{k+1} \}}.
$$
Note that, by definition, $X_n \dist C_{n,p}$ for all $n \geq 0$ and that $X \dist C_p$. By Corollary \ref{cor:monotonicityofkingman} it holds that $X_m(\omega) \leq X_n(\omega)$ for all $0 \leq m \leq n$. By the fact that $\prob(C_{n,p} \geq k) \to \prob(C_p \geq k)$, we have that $X_n(\omega) \leq X(\omega)$ for all $n \geq 0$. Moreover, $X_n \convas X$ as $n \to \infty$ since
$$
\left\{\omega \in \Omega : \lim_{n \to \infty} X_n(\omega) \ne X(\omega)\right\} \subseteq \{q_1,q_2,...\}.
$$
Now apply the monotone convergence theorem to conclude.
\end{proof}

Since $K_{p,\epsilon}^- \to \infty$ as $n \to \infty$ whenever $p \to 0$ as $n \to \infty$, Theorem \ref{thm:trees_small_p} follows directly from Lemma \ref{lem:Cnp_bounds}.

\section{Structural properties of $F(G_{n,p})$}\label{sec:discussion}

Many statistics of the trees in a Kingman forest of a $G_{n,p}$ can be determined by using the useful connection between $\king(K_n)$ and uniform random recursive trees, which we briefly introduce now. The \emph{uniform random recursive tree process} is an infinite sequence of random rooted trees $(T_n)_{n=1}^\infty$, where $T_1$ consists of a root labelled $1$, and $T_{n+1}$ is derived from $T_n$ by attaching a vertex labelled $n+1$ to a uniform vertex from $T_n$. The tree $T_n$ is called a uniform random recursive tree of size $n$. Much is known about the structural properties of $T_n$ as $n \to \infty$, including statistics like the height, max degree, and profile \cite{devroye1987branching,pittel1994note,devroye1995strong,dobrow1999total,goh2002limit,janson2005asymptotic,fuchs2006profiles,zhang2015number}. It turns out \cite{devroye1987branching,addario2018high,eslava2022depth} that a Kingman forest of $K_n$, upon re-labelling the vertices and edges in a way that we describe later, is distributed like $T_n$. Since labellings do not affect the structure of the trees, this connection can be leveraged to deduce information about label--independent properties of either model by studying the other.

The \emph{uniform random recursive forest process with $k$ trees} is a sequence of forests $(F_{n,k})_{n=k}^\infty$ defined recursively. First, $F_{k,k}$ is a graph with $k$ roots labelled $1 , ... , k$ and no edges. $F_{n+1,k}$ is derived from $F_{n,k}$ by adding an directed edge from a new vertex with the label $n+1$ to a uniformly chosen vertex in $F_{n,k}$. We write $(F_{n,k})_{n=k}^\infty \dist \urrf_k$ and $F_{n,k} \dist \urrf_k(n)$.

Let $f \in \cF_{n,n-k}$. Suppose that its roots are $x_1 \leq ... \leq x_k$. For all $i \in [n] \setminus \{x_1 , ... , x_k\}$, let $\ell_f(i)$ be the label of the unique edge that has $i$ as its tail. We define a new random labelling of the vertices $L_f:[n] \to [n]$ as follows:
    $$
    L_f(i) = 
    \begin{cases}
    j, \text{ if } i = x_j \\
    n-\ell_f(i) + 1, \text{ if } i \notin \{x_1 , ... , x_k\}
    \end{cases}
    .
    $$
Let $\Phi(f)$ be forest that is obtained from $f$ by removing the edge labellings, and relabelling the vertices by $L_f$.

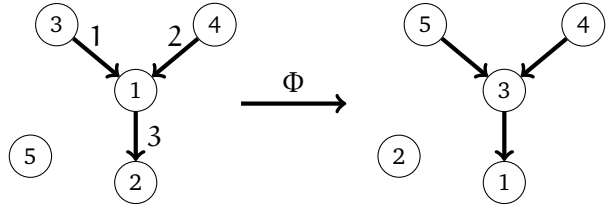
\begin{figure}[H]
    \centering
    \begin{tikzpicture}[scale=0.7]
    \begin{scope}[every node/.style={circle,draw}]
    \node[scale=0.8] (1') at (1,1.25) {1};
    \node[scale=0.8] (2') at (1,-0.5) {2};
    \node[scale=0.8] (3') at (-0.5,2.5) {3};
    \node[scale=0.8] (4') at (2.5,2.5) {4};
    \node[scale=0.8] (5') at (-1,0) {5};
    \end{scope}

    \draw[->,ultra thick] (1') -- (2') node[midway,right] {$3$};
    \draw[->,ultra thick] (4') -- (1') node[midway,above] {$2$};
    \draw[->,ultra thick] (3') -- (1') node[midway,above] {$1$};

    \begin{scope}[every node/.style={circle,draw}]
    \node[scale=0.8] (1'') at (8,1.25) {3};
    \node[scale=0.8] (2'') at (8,-0.5) {1};
    \node[scale=0.8] (3'') at (6.5,2.5) {5};
    \node[scale=0.8] (4'') at (9.5,2.5) {4};
    \node[scale=0.8] (5'') at (6,0) {2};
    \end{scope}

    \draw[->,ultra thick] (1'') -- (2'');
    \draw[->,ultra thick] (4'') -- (1'');
    \draw[->,ultra thick] (3'') -- (1'');

    \draw[->,ultra thick] (3,1) -- (5,1) node[midway,above] {$\Phi$};
    \end{tikzpicture}
    \caption{A forest in $\cF_{5,3}$ and its image under $\Phi$.}
    \label{fig:randomrecursiveforest}
\end{figure}

\begin{lemma}\label{lem:randomrecursiveforest}
Let $F \dist \unif(\cF_{n,n-k})$. Then, $\Phi(F) \dist \urrf_k(n)$.
\end{lemma}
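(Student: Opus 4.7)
The plan is to encode each $f \in \cF_{n,n-k}$ by a sequential ``add edges in decreasing label order'' procedure, and then verify that the relabeling $\Phi$ converts the uniform measure on $\cF_{n,n-k}$ into the defining dynamics of $\urrf_k(n)$. First I would argue that any $f \in \cF_{n,n-k}$ is uniquely determined by: a root set $R = \{x_1 < \cdots < x_k\} \subseteq [n]$, together with, for $t \in [n-k]$, a pair $(i_t, j_t)$ with $i_t \in [n] \setminus (R \cup \{i_1,\ldots,i_{t-1}\})$ and $j_t \in R \cup \{i_1,\ldots,i_{t-1}\}$, interpreted as placing the edge of label $n-k-t+1$ from the child $i_t$ to its parent $j_t$. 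The equivalence between the label-decreasing constraint and this ``attach a fresh vertex to an already-introduced vertex'' rule is the heart of the argument: when edges of $f$ are processed in decreasing order of label, any edge $(i,j)$ in a valid $f$ has the edge above it (from $j$ up to $j$'s own parent, if any) of strictly larger label, hence introduced at an earlier step. Conversely, any such sequence of choices automatically satisfies the decreasing-along-paths condition, and no cycles can form because each $i_t$ is fresh.

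This bijection has domain of size $\binom{n}{k}\prod_{t=1}^{n-k}(n-k-t+1)(k+t-1)$, so sampling $F \dist \unif(\cF_{n,n-k})$ is equivalent to sampling $R, (i_1,j_1), \ldots, (i_{n-k},j_{n-k})$ as jointly independent uniform random variables on their respective domains; in particular, conditional on the previous choices, $j_t$ is uniform on the $(k+t-1)$-element set $R \cup \{i_1, \ldots, i_{t-1}\}$ and $i_t$ is uniform on the $n-k-t+1$ remaining fresh vertices.

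Finally, I would track the effect of $\Phi$ on this sequential construction. By definition $L_f(x_j) = j$ for $j \in [k]$, while the vertex $i_t$ has outgoing edge of label $n-k-t+1$ and so receives the new label $L_f(i_t) = n - (n-k-t+1) + 1 = k+t$. Hence $L_f$ restricts to a bijection $R \cup \{i_1, \ldots, i_{t-1}\} \to [k+t-1]$, and the parent of vertex $k+t$ in $\Phi(F)$ is $L_f(j_t)$, which is therefore uniform on $[k+t-1]$ and independent across $t$. This is exactly the recursive rule defining $\urrf_k(n)$, so $\Phi(F) \dist \urrf_k(n)$. The main obstacle is carefully verifying the sequential bijection in the first paragraph; all subsequent steps are essentially bookkeeping, since once the uniform measure on $\cF_{n,n-k}$ is expressed as a product of independent uniform choices, the map $\Phi$ only has to be checked to send the factor of $j_t$ to a uniform random variable on $[k+t-1]$.
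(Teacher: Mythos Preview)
Your proof is correct and takes a genuinely different route from the paper's. The paper argues statically: it identifies $\urrf_k(n)$ with the uniform distribution on the set $\cR_{n,k}$ of increasing forests on $[n]$ with $k$ trees, records the cardinalities $|\cF_{n,n-k}| = \frac{n!(n-1)!}{k!(k-1)!}$ and $|\cR_{n,k}| = \frac{(n-1)!}{(k-1)!}$, and then shows that $\Phi:\cF_{n,n-k}\to\cR_{n,k}$ is an $\frac{n!}{k!}$-to-$1$ surjection by exhibiting each fiber as the orbit of a single preimage under vertex permutations that preserve the relative order of the roots. Your argument is instead dynamic: you unroll the uniform measure on $\cF_{n,n-k}$ into a product of independent uniform choices via the ``add edges in decreasing label order'' bijection, and then check that $\Phi$ pushes this forward to exactly the one-vertex-at-a-time attachment rule defining $\urrf_k(n)$. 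Your approach has the advantage of explaining \emph{why} the relabeling $L_f$ works---it converts ``largest-label-first'' edge insertion in $f$ into ``smallest-new-label-first'' vertex attachment in $\Phi(f)$---while the paper's counting argument certifies the result without this picture. Conversely, the paper's approach sidesteps the careful verification of your sequential bijection, trading it for a short isomorphism check between any two preimages of the same increasing forest.
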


\begin{proof}
Let $\cR_{n,k}$ be the set of all forests on $n$ vertices, with $k$ trees, whose trees are increasing. Simple inductive arguments on $k$ show that $|\cR_{n,k}| = \frac{(n-1)!}{(k-1)!}$, that $F_{n,k} \dist \unif(\cR_{n,k})$, and that $|\cF_{n,n-k}| = \frac{n!(n-1)!}{k!(k-1)!}$. From these observations we conclude that, to show the desired result, it suffices to show that $\Phi:\cF_{n,n-k} \to \cR_{n,k}$ is an $\frac{n!}{k!}$ to $1$ surjection.

Let $f \in \cR_{n,k}$. It is easy to see that $|\Phi^{-1}(f)| > 0$ by considering the forest obtained by labelling each edge, $(u,v)$, of $f$ by $n - u + 1$ and relabelling the non-root vertices arbitrarily in the set $[n] \setminus [k]$. Then, observe that applying two distinct permutations $\sigma,\tau$ to the vertex labels of any particular $f \in \Phi^{-1}(f_2)$ that satisfies $\sigma(x_1) \leq ... \leq \sigma(x_k)$ and $\tau(x_1) \leq ... \leq \tau(x_k)$ yields two distinct forests $f_\sigma$ and $f_\tau$ that both are in the set $\Phi^{-1}(f_2)$. From this observation, we get that $|\Phi^{-1}(f_2)| \geq \frac{n!}{k!}$ (the number of permutations that satisfy the described constraint).

Next, suppose that $f_2,f_2' \in \Phi^{-1}(f)$. Let $x_1 , ... , x_k$ and $x_1' , ... , x_k'$ be the roots of $f_2$ and $f_2'$ respectively. We define a function $\sigma:[n] \to [n]$ as follows. First, we set $\sigma(x_j) = x_j'$ for all $1 \leq j \leq k$. Then, for all $i \in [n] \setminus \{x_1 , ... , x_k\}$, we set $\sigma(i)$ to be the unique vertex $j$ in $f_2'$ such that $\ell_{f_2}(i) = \ell_{f_2'}(j)$. $\sigma$ is clearly a bijection and is clearly edge-label-preserving. If we can show that it is a graph isomorphism between $f_2$ and $f_2'$, then it follows that $|\Phi^{-1}(f)| \leq \frac{n!}{k!}$, and so $|\Phi^{-1}(f)| = \frac{n!}{k!}$.

By the symmetry of the two forests, to show that $\sigma$ is an isomorphism, it suffices to show that $(\sigma (u) , \sigma (v)) \in E(f_2')$ for all $(u,v) \in  E(f_2)$. By the definitions of $\sigma$ and $\Phi$, we have that $L_{f_2}(u) = L_{f_2'}(\sigma(u))$ and $L_{f_2}(v) = L_{f_2'}(\sigma(v))$. Since $\Phi(f_2) = f$, it holds that $(L_{f_2}(u) , L_{f_2}(v)) \in E(f)$, and so $(L_{f_2'}(\sigma(u)) , L_{f_2'}(\sigma(v))) \in E(f)$ as well. Since $L_{f_2'}$ is just a relabelling of the vertices, we conclude that $(\sigma(u) , \sigma(v)) \in E(f_2')$.

\begin{comment}
case 1: $v \in \{x_1,\ldots,x_j\}$. Let $j^* \in [k]$ be such that $v = x_{j^*}$. Since $\Phi(f_2) = \Phi(f_2')$ and $u$ and $\sigma(u)$ are tails of edges with the same label, it must hold that $(L_{f_2}(u) , j^*) \in E(f)$ and $( L_{f_2'}(\sigma(u)) , j^*) \in E(f)$. Thus, we have that $(\sigma(u),x_{j^*}') \in E(f_2')$, and so by definition that $(\sigma(u),\sigma(v)) \in E(f_2')$.

case 2: $v \notin \{x_1 , ... , x_k\}$. Since $\Phi(f_2) = f$, we have that $(L_{f_2}(u) , L_{f_2}(v)) \in E(f)$. By the definition of $\sigma$, this implies that $(L_{f_2'}(\sigma(u)) , L_{f_2'}(\sigma(v))) \in E(f)$ as well, and so $(\sigma(u) , \sigma(v)) \in E(f_2')$.
\end{comment}
\end{proof}

Now fix $p \in (0,1)$. 
From Lemma \ref{lem:uniform} (i), for $n \in \N$ we have that, conditional upon $C_{n,p}$, $F(G_{n,p})$ is a uniform element of $\cF_{n,n-C_{n,p}}$. By Lemma \ref{lem:randomrecursiveforest}, a uniform element of $\cF_{n,n-C_{n,p}}$ has the same graph structure as a uniform random recursive forest with $C_{n,p}$ trees. We can use this fact to derive information about the structure of $F(G_{n,p})$. First we cover the sizes of the trees in the forest.

\begin{theorem}\label{thm:treesizes}
Fix $p \in (0,1)$ and let $X_n = (|S_1|,...,|S_{C_{n,p}}|)$ be the sizes of the trees in $F(G_{n,p})$. Then, $(\frac{1}{n}X_n,C_{n,p}) \convdist (X,C_{p})$ as $n \to \infty$, where $C_{p}$ is the random variable from Theorem \ref{thm:trees} and, conditional upon $C_p$, $X$ has a Dirichlet distribution with parameters $\alpha_1  = ... = \alpha_{C_{p}} = 1$.
\end{theorem}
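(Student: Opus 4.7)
The plan is to combine the uniform characterization of $F(G_{n,p})$ from Lemma \ref{lem:uniform} with a P\'olya urn description of tree sizes in a uniform random recursive forest, together with the marginal convergence $C_{n,p} \convdist C_p$ from Theorem \ref{thm:trees}. It is enough to show that, conditional on $\{C_{n,p} = k\}$, the vector $\frac{1}{n}X_n$ converges in distribution to a $\dirichlet(1,\ldots,1)$ vector on the $(k-1)$-simplex as $n \to \infty$; one can then average over $k$ and invoke Theorem \ref{thm:trees}.

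First I would fix $k \in \N$ and work conditionally on $\{C_{n,p} = k\}$. By Lemma \ref{lem:uniform}(i), on this event $F(G_{n,p})$ is distributed uniformly on $\cF_{n,n-k}$. Lemma \ref{lem:randomrecursiveforest} then says that its image under $\Phi$ is distributed as $\urrf_k(n)$. Since $\Phi$ is a vertex relabeling followed by deletion of edge labels, it preserves the multiset of tree sizes, so $X_n$ has the same law as the tree-size vector of $\urrf_k(n)$ listed in some order that is exchangeable in $[k]$.

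The crucial step is the P\'olya urn analysis of $\urrf_k(n)$. Writing $T_i(n)$ for the size at step $n$ of the tree of $\urrf_k(n)$ whose root is labeled $i \in [k]$, the URRF attachment rule gives
$$
\prob\bigl(T_i(n+1) = T_i(n)+1 \,\big|\, T_1(n),\ldots,T_k(n)\bigr) = \frac{T_i(n)}{n},
$$
so $(T_1(n),\ldots,T_k(n))$ evolves as the count vector of a P\'olya urn on $k$ colors, each starting with one ball. The classical limit theorem for such urns (Athreya--Karlin) yields
$$
\tfrac{1}{n}(T_1(n),\ldots,T_k(n)) \convas (Y_1,\ldots,Y_k),
$$
where $(Y_1,\ldots,Y_k) \dist \dirichlet(1,\ldots,1)$ is uniform on the $(k-1)$-simplex.

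Finally I would stitch the pieces together. For each fixed $k$, the previous step provides the conditional convergence of $\frac{1}{n}X_n$ to a $\dirichlet(1,\ldots,1)$ vector given $\{C_{n,p} = k\}$; exchangeability of $\dirichlet(1,\ldots,1)$ means the listing order is immaterial. Combining this with Theorem \ref{thm:trees} via a standard mixture argument (use the conditional convergence on each atom $\{C_p = k\}$ and sum against the law of $C_p$, noting that for any bounded continuous test function the contribution from large $k$ is negligible by tightness of $C_{n,p}$) delivers the joint convergence $(\tfrac{1}{n}X_n, C_{n,p}) \convdist (X, C_p)$. I do not expect a serious obstacle: the only point that requires care is verifying that the passage from the forest model on $[n]$ to $\urrf_k(n)$ preserves tree sizes up to exchangeable relabeling, after which the P\'olya urn limit does all the real work.
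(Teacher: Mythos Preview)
Your proposal is correct and follows essentially the same route as the paper: condition on $\{C_{n,p}=k\}$, use Lemma~\ref{lem:uniform}(i) together with Lemma~\ref{lem:randomrecursiveforest} to identify the tree-size vector with the colour counts in a $k$-colour P\'olya urn started from one ball of each colour, apply the classical P\'olya urn limit to get $\dirichlet(1,\ldots,1)$, and then combine with Theorem~\ref{thm:trees} to obtain the joint convergence. The only cosmetic difference is that the paper phrases the P\'olya limit as convergence of distribution functions rather than almost sure convergence, and records the equality \eqref{eq:polya_urn} explicitly before passing to the limit.
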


\begin{proof}
For all $n \geq k \geq 0$, let $Y_{n}^{(k)} = (Y_{n,1}^{(k)},...,Y_{n,k}^{(k)})$ be distributed like the number of balls of each colour $1,...,k$ in a standard P\'{o}lya urn that is initialized with one ball of each colour after $(n-k)$ balls have been added to the system. Let $Z_k = (Z_{k,1},...,Z_{k,k})$ be a Dirichlet random variable with parameters $\alpha_1 = \cdots = \alpha_k = 1$. By applying Lemmas \ref{lem:uniform} and \ref{lem:randomrecursiveforest} it holds for any $ 0 \leq x_1,...,x_k \leq n$ that
    \begin{equation}\label{eq:polya_urn}
    \prob\left(|S_1| \geq x_1 , ... , |S_{k}| \geq x_k \ | \ C_{n,p} = k\right) = \prob(Y_{n,1}^{(k)} \geq x_1 , ... , Y_{n,k}^{(k)} \geq x_k).
    \end{equation}
To see this simply note that, at any step of the uniform random recursive forest process, the conditional probability that the process adds a vertex to a given tree is exactly proportional to the size of the tree, so the vector of tree sizes in a sample from $\urrf_k(n)$ is distributed as $Y_n^{(k)}$. It is a well-known result from the theory of P\'{o}lya urns (see e.g., \cite{pemantle2007survey} Theorem 2.1 or \cite{mahmoud2008polya} Theorem 3.2) that, for any $ 0 \leq x_1,...,x_k \leq 1$,
    $$
    \prob\left(\frac{1}{n}Y_{n,1}^{(k)} \geq x_1 , ... , \frac{1}{n}Y_{n,k}^{(k)} \geq x_k\right) \to \prob(Z_{k,1} \geq x_1 , ... , Z_{k,k} \geq x_k)
    $$
as $n \to \infty$. By combining this convergence with  Theorem \ref{thm:trees} and (\ref{eq:polya_urn}) we get,
    $$
    \prob\left(\left\{\frac{1}{n}|S_1| \geq x_1 , ... , \frac{1}{n}|S_k| \geq x_k \right\} \bigcap \left\{ C_{n,p} = k\right\}\right) \to \prob(Z_{k,1} \geq x_1 , ... , Z_{k,k} \geq x_k , C_{p} = k)
    $$
as $n \to \infty$.
\end{proof}

\begin{comment}

Let $t$ be a rooted increasing tree with vertex set $V \subseteq [n]$, and let $x_1 \leq ... \leq x_{|V|}$ be the vertices in increasing order. We define the \emph{minimal labelling} of $t$ to be the tree obtained by relabelling vertex $x_i$ by $i$. It can be verified fairly straightforwardly that, when we replace the labelling of each individual tree in a uniform random recursive forest with its minimal labelling, the resulting trees are uniform random recursive trees that are conditionally independent given their respective sizes. We shall record this fact formally as the next lemma. We omit the proof.

\begin{lemma}\label{lem:conditionalindependence}
Let $n_1 , ... , n_k \geq 0$ be such that $n_1 + ... + n_k = n$. Let $T_{1},...,T_k$ be independent uniform random recursive trees on $n_1,...,n_k$ vertices respectively. Let $(F_{n,k})_{n=k}^\infty \dist \urrf_k$ and let $T_{n,k}^{(1)} , ... , T_{n,k}^{(k)}$ be the trees rooted at $1,...,k$ respectively in $F_{n,k}$ with their minimal labelling. Then, for any rooted, increasing trees $t_1 , ... , t_k$,
    \begin{equation*}
    \prob\Big(T_{n,k}^{(1)} = t_1 , ... , T_{n,k}^{(k)} = t_k \ | \ |T_{n,k}^{(1)}| = n_1 , ... , |T_{n,k}^{(k)}| = n_k \Big) = \prob(T_1 = t_1) \cdots \prob(T_k = t_k).
    \end{equation*}
\end{lemma}

\end{comment}

We finish this section by identifying the asymptotic height of $F(G_{n,p})$.

\begin{theorem}\label{thm:heightthm}
The following two points hold:
\begin{enumerate}
    \item There exists a constant $K > 0$ such that 
    \[
    |\ex[\height(F(G_{n,p}))] - e\log(n) + \frac{3}{2}\log\log(n)| \leq K 
    \]
    for all $n \geq 1$.
    \item We have, $\frac{\height(F(G_{n,p}))}{e\log(n)} \convprob 1$ as $n \to \infty$.
\end{enumerate}
\end{theorem}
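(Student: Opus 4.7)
The plan is to reduce the height analysis to the classical theory of uniform random recursive trees (URRTs). Combining Lemma \ref{lem:uniform}(i) with Lemma \ref{lem:randomrecursiveforest} shows that, conditional on $C_{n,p}=k$, the graph structure of $F(G_{n,p})$ is that of $\urrf_k(n)$. A standard fact about URRF dynamics (provable via the same Polya-urn argument used in Theorem \ref{thm:treesizes}) is that, given additionally the vector of tree sizes $(n_1,\ldots,n_k)$, the trees of $\urrf_k(n)$ are, after minimal relabelling, independent URRTs of those sizes. Since height is a graph invariant, I can write
\[
\height(F(G_{n,p})) = \max_{1 \leq i \leq C_{n,p}} H_i,
\]
where the $H_i$ are conditionally independent URRT heights given $C_{n,p}$ and the tree sizes. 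I then invoke the classical results of \cite{devroye1987branching,pittel1994note}: for a URRT $T_m$ on $m$ vertices, $\height(T_m)/\log m \to e$ almost surely and $\ex[\height(T_m)] = e\log m - \tfrac{3}{2}\log\log m + O(1)$; together with quantitative tail bounds from \cite{devroye1995strong}, this implies that the centered heights $W_m := \height(T_m) - e\log m + \tfrac{3}{2}\log\log m$ form a uniformly integrable family in $m$.

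For part (ii), I treat the two sides of $\height(F)/(e\log n) \convprob 1$ separately. The upper bound follows from a union bound: given $\epsilon>0$, the URRT convergence gives $H_i \leq (e+\epsilon)\log n$ with probability tending to $1$ provided $|T_i| \to \infty$, and applying this across the $C_{n,p} = O_p(1)$ trees (Theorem \ref{thm:trees}) yields $\height(F) \leq (e+\epsilon)\log n$ with probability tending to $1$. For the lower bound, the largest tree $T^*$ has $|T^*| \geq n/C_{n,p} \geq c n$ with probability arbitrarily close to $1$ for $c$ small, so the URRT convergence applied to $T^*$ gives $\height(T^*) \geq (e-\epsilon)\log n$ with probability tending to $1$.

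For part (i), let $H^*$ be the height of the largest tree $T^*$, and decompose
\[
\max_i H_i = H^* + \big(\max_{i \neq i^*} H_i - H^*\big)^{+}.
\]
For the lower bound, $\ex[\height(F)] \geq \ex[H^*]$; combining the URRT expectation formula with $|T^*| \geq n/C_{n,p}$, the monotonicity of $m \mapsto e\log m - \tfrac{3}{2}\log\log m$ (for $m \geq 2$), and the bound $\ex[\log C_{n,p}] = O(1)$ (which follows from Theorem \ref{thm:trees} and uniform integrability of $C_{n,p}$, since $\log C_{n,p} \leq C_{n,p}$) yields $\ex[H^*] \geq e\log n - \tfrac{3}{2}\log\log n - O(1)$. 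For the upper bound, since $|T_i| \leq |T^*|$, monotonicity gives $H_i - H^* \leq W_i - W^*$, so the uniform $L^1$-bound on the $W$'s yields $\ex[(H_i - H^*)^+] = O(1)$; summing over the $C_{n,p}$ trees with $\ex[C_{n,p}] = O(1)$ gives $\ex[\height(F)] \leq \ex[H^*] + O(1)$, matching the lower bound up to $O(1)$.

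The main technical obstacle is confirming the uniform $L^1$-bound on the centered URRT heights $W_m$; while this is essentially folklore, extracting the required uniform moment control from the existing fluctuation theory is the most delicate step. Everything else reduces to careful tracking through the conditional decomposition on $(C_{n,p}, |T_1|, \ldots, |T_{C_{n,p}}|)$ and standard manipulations with the URRT formulas.
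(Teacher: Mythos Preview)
Your approach is correct but takes a genuinely different and more laborious route than the paper. The paper avoids the subtree decomposition entirely by exploiting one observation: since removing from a URRT $T_n$ the $k-1$ edges with both endpoints in $[k]$ yields a sample from $\urrf_k(n)$, one can realise $F(G_{n,p})$ (up to relabelling) by sampling a single URRT $T_n$ and an \emph{independent} copy of $C_{n,p}$ and then deleting those $C_{n,p}-1$ edges. Deleting $k-1$ edges lowers the height by at most $k-1$, so under this coupling one has the pointwise sandwich
\[
\height(T_n)-C_{n,p}\ \le\ \height\big(F(G_{n,p})\big)\ \le\ \height(T_n),
\]
and both parts of the theorem drop out in two lines from $\sup_m\ex\bigl|\height(T_m)-e\log m+\tfrac32\log\log m\bigr|<\infty$ (cited from \cite{addario2013poisson}) together with $\sup_n\ex[C_{n,p}]<\infty$.

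Your route instead conditions on $(C_{n,p},|T_1|,\ldots,|T_{C_{n,p}}|)$, decomposes into independent URRT components, and controls the maximum of their heights. This forces you to track the largest tree, handle the shift from $\log\log n^*$ to $\log\log n$, bound the excess $(\max_{i\ne i^*}H_i-H^*)^+$ via the uniform $L^1$ control on the centred heights, and treat singleton trees separately. All of this goes through (your monotonicity claim for $m\mapsto e\log m-\tfrac32\log\log m$ on $m\ge 2$ is indeed correct), but it is substantially more bookkeeping for the same input, since the uniform $L^1$ bound on $W_m$ you flag as ``the most delicate step'' is exactly the single external fact the paper's proof also uses. The payoff of your decomposition is modularity: it would adapt more naturally to identifying which tree attains the maximum, or to regimes (such as $p\to 0$) where $C_{n,p}$ is unbounded and the sandwich $\height(T_n)-C_{n,p}$ becomes too crude.
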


\begin{proof}
Let $(T_m)_{m=1}^\infty$ be distributed as the uniform random recursive tree process. Then, it is immediate from the respective definitions that $([m] , E(T_m) \setminus \binom{[m]}{2})_{m=k}^\infty \dist \urrf_k$. This fact, in combination with Lemma \ref{lem:uniform} (i), implies that we can generate $F(G_{n,p})$ by sampling $T_n$ and $C_{n,p}$ independently, then deleting the first $C_{n,p}$ edges from $T_n$. Since the deletion of all $k-1$ edges in $\binom{[k]}{2}$ from a uniform random recursive tree can at most reduce the height by $k-1$, this coupling gives us, for all $x > 0$,
\begin{equation}\label{eq:stochdom_urrt}
\prob(\height(T_n) - C_{n,p} \geq x) \leq \prob\left( \height(F(G_{n,p})) \geq x \right) \leq \prob\left( \height(T_n) \geq x \right)\, .
\end{equation}
By Corollary 1.3 of \cite{addario2013poisson}, it holds that
\[
\Lambda := \sup_{n \geq 1}\ex \ \Big|\height(T_n) - e\log(n) + \frac{3}{2}\log\log(n)\Big|  < \infty.
\] 
Also, from Theorem \ref{thm:trees}, we have that $\ex [C_{n,p}] \to \ex [C_p]< \infty$ as $n \to \infty$. Combining these two facts with (\ref{eq:stochdom_urrt}) we get,
$$
\sup_{n \ge 1}
\left| \ex\Big[\height\big(F(G_{n,p})\big)\Big] - e\log(n) + \frac{3}{2}\log\log(n)\right| \leq \sup_{n \ge 1}\ex[C_{n,p}] + \Lambda < \infty,
$$
proving the first result. For the second result, first note that, since $\Lambda < \infty$, Markov's inequality implies that $\frac{\height(T_n)}{e\log(n)} \convprob 1$ as $n \to \infty$. Then, since $\ex[C_p] < \infty$, we have that $\frac{C_{n,p}}{e\log(n)} \convprob 0$ as $n \to \infty$. Combining these two convergences with (\ref{eq:stochdom_urrt}) completes the proof of the second result.
\end{proof}

\subsection*{Computing other statistics in $F(G_{n,p})$}

    The usefulness of the coupling from the proof of Theorem \ref{thm:heightthm}, where we generate $F(G_{n,p})$ from independently sampled $C_{n,p}$ and $T_n$, is not limited in its usage to only discussion of the height. For example, this fact almost implies that, for any $i \in [n]$, $\deg_{F(G_{n,p})}(i)$ can be coupled with $\deg_{T_n}(i)$ so that $|\deg_{F(G_{n,p})}(i) - \deg_{T_n}(i)| \leq 1$. If we take $i = i(n) \to \infty$ as $n \to \infty$, we even have that $\prob(\deg_{F(G_{n,p})}(i)  \ne \deg_{T_n}(i)) \to 0$ as $n \to \infty$. From this we could derive a variety of results concerning the degrees in Kingman forests of $G_{n,p}$ with almost no extra effort. More generally, one can compute almost any statistic of interest that is understood for uniform random recursive trees by leveraging the fact that $([m] , E(T_m) \setminus \binom{[m]}{2})_{m=k}^\infty \dist \urrf_k$.

\section{Quantitative results on $M_k$}\label{sec:bound}

In this section we prove the results on $M_k$ in Lemma \ref{lem:Mk_results} that we used in the proof of our main results. We require the use of many fairly standard tail bounds for familiar collections of random variables in our analysis of $M_k$.

\begin{lemma}\label{lem:manybounds}
    Let $0 < \delta < 1$. Let $(X_k)_{k = 1}^n$ be an independent collection of random variables with $X_k \dist \hyper(d_k,m_k,n_k)$ for some $d_k , m_k \leq n_k$ and set $X = \sum_{k=1}^n X_k$ and $\mu = \ex[X] = \sum_{k=1}^n \frac{d_km_k}{n_k}$. Then, 
        $$
        \prob\left(\left|X-\mu\right| \geq \delta\mu\right) \leq 2\exp\left(-\frac{\delta^2 \mu}{3}\right).
        $$
    Let $X \dist \negbin(r,p)$. Then,
        $$
        \prob\left(X \geq \frac{(1+\delta) r (1-p)}{p} \right) \leq \exp\left( -\frac{((1-p)\delta)^2r}{6}\right),
        $$
    and
        $$
        \prob\left( X \leq (1-\delta)\frac{r(1-p)}{p} \right) \leq \exp\left( -\frac{((1-p)\delta)^2r}{3(1-\delta(1-p))} \right).
        $$
\end{lemma}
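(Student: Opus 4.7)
The three inequalities are standard multiplicative Chernoff-type estimates, so my plan is to reduce each to a known tail bound on a binomial or on an independent sum of bounded random variables, and then to track the constants.

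\textbf{Hypergeometric sum.} For a single $X_k\dist \hyper(d_k,m_k,n_k)$, I would invoke the classical result of Hoeffding that every $\hyper(d,m,n)$ random variable satisfies the same Chernoff bounds as a $\bin(d,m/n)$ random variable; this is proved by writing $X_k$ as a sum of $d_k$ exchangeable Bernoulli$(m_k/n_k)$ indicators and using the fact that sampling without replacement is dominated, in the convex order, by sampling with replacement. Applied term-by-term and then combined using independence of the $X_k$ (so that the moment generating functions multiply), one gets that the MGF of $X$ is dominated by that of an independent sum $\tilde X=\sum_k \tilde X_k$ with $\tilde X_k\dist \bin(d_k,m_k/n_k)$, which has the same mean $\mu$. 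The standard two-sided multiplicative Chernoff bound for a sum of independent Bernoullis then yields
\[
\prob\bigl(|X-\mu|\geq \delta\mu\bigr)\leq 2\exp(-\delta^2\mu/3),
\]
which is the claimed inequality.

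\textbf{Negative binomial tails.} Here I would use the familiar duality
\[
\prob\bigl(X\geq k\bigr) \;=\; \prob\Bigl(\bin(k+r-1,\,p)\leq r-1\Bigr),
\]
which holds because $\{X\geq k\}$ is exactly the event that among the first $k+r-1$ Bernoulli$(p)$ trials, at most $r-1$ are successes. For the upper tail, I would set $k=\lceil(1+\delta)r(1-p)/p\rceil$, so that $n:=k+r-1$ satisfies $pn\geq r(1+\delta(1-p))$ and the deviation of $r-1$ from $pn$ is of order $\delta(1-p)r$. Applying the standard lower-tail multiplicative Chernoff bound $\prob(\bin(n,p)\leq (1-\eta)pn)\leq \exp(-\eta^2 pn/2)$ with $\eta=\delta(1-p)/(1+\delta(1-p))$ and simplifying gives the stated $\exp(-((1-p)\delta)^2 r/6)$ (the factor $6$ absorbs the $(1+\delta(1-p))$ in the denominator, using $\delta<1$). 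For the lower tail I would analogously write $\prob(X\leq k) = \prob(\bin(k+r,p)\geq r)$, choose $k=\lfloor (1-\delta)r(1-p)/p\rfloor$, so that $p(k+r)\leq r(1-\delta(1-p))$, and apply the upper-tail Chernoff bound $\prob(\bin(n,p)\geq (1+\eta)pn)\leq \exp(-\eta^2 pn/(2+\eta))$ with $\eta=\delta(1-p)/(1-\delta(1-p))$; routine simplification (keeping track of the $(1-\delta(1-p))$ factor in the denominator, which is where the $3(1-\delta(1-p))$ appears in the stated bound) yields the claimed inequality.

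\textbf{Main obstacle.} Nothing conceptually difficult arises; the only care needed is in bookkeeping the multiplicative factors so that the constants $3$, $6$, and $3(1-\delta(1-p))$ come out exactly as stated. In particular, the asymmetry in the two negative binomial tails traces back to the asymmetry between the two forms of the multiplicative Chernoff bound for $\bin(n,p)$, and the $(1-p)$ factors inside the exponentials arise from the change of variables $\eta=\delta(1-p)/(1\pm\delta(1-p))$ relating the deviation for $X$ to the deviation for the associated binomial variable. I would present both negative binomial estimates in parallel to make the symmetry (and the source of its breakdown) transparent.
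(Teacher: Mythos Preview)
Your proposal is correct and follows essentially the same route as the paper: Hoeffding's reduction of sampling without replacement to sampling with replacement for the hypergeometric part, and the binomial/negative-binomial duality combined with the multiplicative Chernoff bound for the $\negbin$ tails. The paper's write-up is terser---it merely cites Hoeffding for the first bound, carries out the upper-tail $\negbin$ computation exactly as you describe (with $\mu^+=(1+\delta(1-p))r$), and omits the lower-tail case as analogous---so your more explicit bookkeeping of the $\eta=\delta(1-p)/(1\pm\delta(1-p))$ substitutions would actually improve on the exposition.
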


\begin{proof}
\begin{comment}
For this proof we recall the well known Chernoff bound for sums of Bernoulli random (see e.g., \cite{motwani1995randomized}). Let $(X_k)_{k = 1}^n$ be an independent collection of random variables with $X_k \dist \ber(p_k)$ for some collection $(p_k)_{k=1}^n$, and set $X = \sum_{k=1}^n X_k$ and $\mu = \sum_{k=1}^n p_k$. Then, 
\begin{align}
    \prob(X \geq (1+\delta) \mu) &\leq \exp\left( -\frac{\delta^2\mu}{3} \right), \label{eq:chernoff_upper} \\
    \prob(X \leq (1-\delta) \mu) &\leq \exp\left( -\frac{\delta^2\mu}{3} \right) \label{eq:chernoff_lower}.
\end{align}
        
For the first inequality, we can write $X_k = \sum_{i=1}^{d_k} X_{k,i}$, where $X_{k,i} \dist \ber(\frac{m_k}{n_k})$ is the indicator that the $i$th draw from the population is a success. Let $(Y_{k,i})_{k,i=1}^{n,d_k}$ be a collection of independent random variables with $Y_{k,i} \dist \ber(\frac{m_k}{n_k})$ Then, a corollary of \cite[Theorem 4]{hoeffding1994probability} asserts that
    $$
    \ex\left[ \exp\left( \sum_{i=1}^{d_k} tX_{k,i} \right) \right] \leq \ex\left[ \exp\left( \sum_{i=1}^{d_k} tY_{k,i} \right) \right].
    $$
From here, we may apply (\ref{eq:chernoff_upper}) and (\ref{eq:chernoff_lower}) to yield the first inequality.
\end{comment}

The first bound follows from an extension of Hoeffding's inequality to the setting of sampling without replacement \cite[Theorems 2 and 4]{hoeffding1994probability}.

The second and third inequalities follow from the close relationship between binomial and negative binomial random variables. For a $\negbin(r,p)$ random variable to be at least $k$, we need to observe at most $r$ successes from $r+k$ independent $\ber(p)$ trials. Hence,
    $$
    \prob\left( X \geq (1+\delta)\frac{ r (1-p)}{p} \right) = \prob\left( \bin\left(r+(1+\delta)\mu , p \right) \leq r \right),
    $$
where $\mu = \frac{r(1-p)}{p}$. Using a Chernoff bound gives
    $$
    \prob\left( X \geq (1+\delta)\frac{ r (1-p)}{p} \right) \leq \exp\left( -\frac{1}{3}\left(1 - \frac{r}{\mu^+}\right)^2\mu^+ \right),
    $$
where 
    $$
    \mu^+ := ((1+\delta)\mu+r)p = (1+\delta)r(1-p) + rp = (1+\delta (1-p))r.
    $$ 
From here one can simplify the expression in a straightforward way to derive the final result:
    \begin{align*}
    \exp\left( -\frac{1}{3}\left(1 - \frac{r}{\mu^+}\right)^2\mu^+ \right) &= \exp\left( -\frac{1}{3}\left((1+(1-p)\delta)r - 2r + \frac{r}{(1+(1-p)\delta)}\right) \right) \\
    &= \exp\left( -\frac{1}{3}\left(\frac{((1-p)\delta)^2r}{1+(1-p)\delta}\right) \right) \\
    &\leq \exp\left(-\frac{((1-p)\delta)^2r}{6}\right).
    \end{align*}
The corresponding lower bound is derived in an almost identical fashion, so we omit the proof. 
\end{proof}

Using the bounds from Lemma \ref{lem:manybounds} we can prove (i) in Lemma \ref{lem:Mk_results}, which we restate in the following lemma.

\begin{lemma}\label{lem:upper_tail_bound}
    For any $\eta,\epsilon \in (0,1)$, there exist $C,L,c > 0$ such that the following holds. Fix $p \in (0,\eta)$ and integers $n$ and $\ell$ such that $n \geq \ell \geq L \vee K_{p,\epsilon}^+$. Then,
        $$
        \prob\left( \bigcup_{k = 0}^{n-\ell} \left\{ M_k \geq (1+\epsilon)\frac{(1-p)(n-k)}{p} \right\} \right) \leq Ce^{-c\ell}.
        $$
\end{lemma}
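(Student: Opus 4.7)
The plan is to establish a per-$k$ bound of the form $\prob(M_k \geq T_k) \leq C_0 e^{-c_0(n-k)}$ (writing $T_k := (1+\epsilon)(1-p)(n-k)/p$) and then apply a union bound. The resulting geometric tail sum
\[
\sum_{k=0}^{n-\ell} C_0 e^{-c_0(n-k)} = C_0 \sum_{r=\ell}^{n} e^{-c_0 r} \leq \frac{C_0}{1-e^{-c_0}}\, e^{-c_0\ell}
\]
delivers the claim with $c = c_0$ and $C = C_0/(1-e^{-c_0})$.

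The per-$k$ bound is built on the stochastic domination $M_k \preceq \negbin(k,p)$. This follows from Lemma \ref{lem:recursion}(i): $M_{k+1}-M_k = X_k - Y_k \leq X_k \leq G_k$ for $G_k \sim \geo(p)$ the untruncated geometric underlying $X_k$, so $M_k \leq \sum_{j=0}^{k-1} G_j \sim \negbin(k,p)$. Writing $\delta_k := (1+\epsilon)(n-k)/k - 1$, and restricting to $k \leq k^* := (1+\epsilon)n/(2+3\epsilon/2)$ so that $\delta_k \geq \epsilon/2$, the negative-binomial tail bound in Lemma \ref{lem:manybounds} gives
\[
\prob(M_k \geq T_k) \leq \prob\bigl(\negbin(k,p) \geq (1+\delta_k) k(1-p)/p\bigr) \leq \exp\!\Big(-\tfrac{(1-p)^2 \delta_k^2 k}{6}\Big).
\]
An elementary case analysis (separately handling $k \leq k^*/2$ and $k \in (k^*/2, k^*]$, using that $\delta_k \gtrsim n/k$ in the former range and $\delta_k \geq \epsilon/2$ with $k \gtrsim n$ in the latter) shows $\delta_k^2 k \geq c_1 n$ uniformly on $[0, k^*]$ for some $c_1 = c_1(\epsilon) > 0$. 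Since $p \leq \eta$ gives $(1-p)^2 \geq (1-\eta)^2$, and trivially $n \geq n-k$, we obtain the per-$k$ bound on this range with $c_0 = c_1(1-\eta)^2/6$.

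For $k > k^*$, the simple domination fails (the mean of $\negbin(k,p)$ exceeds $T_k$), and a drift-based argument is required. The idea is that whenever $M_k$ is close to or above $T_k$, the one-step drift $\ex[X_k - Y_k \mid M_k] \approx (1-p)/p - 2M_k/(n-k)$ is more negative than the deterministic decrement $T_{k+1}-T_k = -(1+\epsilon)(1-p)/p$ of the threshold, by a margin of order $\epsilon(1-p)/p$. The plan is to convert this excess drift into an exponential supermartingale $W_k := e^{\lambda(M_k - T_k)}$ for a small $\lambda > 0$ depending only on $\epsilon$ and $\eta$, and then use Doob's maximal inequality to bound the first passage of $M_k$ above $T_k$, again yielding a bound of the form $e^{-c_0(n-k)}$. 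The main obstacle is computing $\ex[e^{\lambda(X_k - Y_k)} \mid M_k]$, since the joint law of $(X_k, Y_k)$ given $M_k$ is intricate (the parameters of the hypergeometric $Y_k$ depend on $M_k + X_k$). I would circumvent this by first conditioning on $X_k$ to separate the two contributions, then using the hypergeometric tail bound of Lemma \ref{lem:manybounds} to isolate and separately control the event that $Y_k$ is far from its conditional mean, and finally working with the resulting deterministic-drift approximation on the good event.
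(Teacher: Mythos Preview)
Your Part~1 (the range $k \le k^*$) is correct: the domination $M_k \preceq \negbin(k,p)$ together with the tail bound gives what you need (indeed the bound you obtain is $e^{-c_0 n}$, stronger than the claimed $e^{-c_0(n-k)}$). The genuine gap is in Part~2.

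There are two problems with the exponential supermartingale $W_k = e^{\lambda(M_k-T_k)}$. First, a minor but real one: $\lambda$ cannot be chosen independent of $p$. The moment generating function of a $\geo(p)$ variable is finite only for $\lambda < -\log(1-p)$, which tends to $0$ as $p\downarrow 0$. You must take $\lambda$ of order $p$; the exponent in the final bound is then $\lambda$ times a gap of order $(n-k)/p$, so the $p$'s cancel and the conclusion can still be $e^{-c(n-k)}$---but this has to be set up correctly.

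Second, and more seriously: $W_k$ is \emph{not} a supermartingale on the range you need. A one-step calculation gives
\[
\ex\big[(M_{k+1}-T_{k+1})-(M_k-T_k)\,\big|\,M_k\big]
\ \approx\ \frac{(2+\epsilon)(1-p)}{p}-\frac{2M_k}{n-k},
\]
which is negative only when $M_k > (1+\tfrac{\epsilon}{2})\frac{(1-p)(n-k)}{p}$. At your handoff point $k^*$, Part~1 tells you only that $M_{k^*}<T_{k^*}$; it gives no reason for $M_{k^*}$ to lie in the narrow band $[(1+\tfrac{\epsilon}{2}),(1+\epsilon)]\cdot\frac{(1-p)(n-k^*)}{p}$ where the drift is favourable. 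Below that band the drift of $M_k-T_k$ is \emph{positive}, so $W_k$ is a submartingale and Doob does not apply. Your workaround (``condition on $X_k$, control the event that $Y_k$ is far from its mean'') addresses the variance of the increment, not this sign-of-drift problem; nothing in the proposal supplies control at the first entry into the high band.

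The paper's proof is organised precisely around this difficulty. It does not attempt a per-$k$ bound or a global supermartingale. Instead it decomposes the exceedance event via \emph{crossings}: writing $I_k=[(1+\tfrac{\epsilon}{2}),(1+\epsilon)]\cdot\frac{(1-p)(n-k)}{p}$, it lets $A_{k,j}$ be the event that $M_k\le\inf I_k$, $M_i\in I_i$ for $k<i<k+j$, and $M_{k+j}\ge\sup I_{k+j}$. Since $M_0=0$, any exceedance forces some $A_{k,j}$. On $A_{k,j}$ the walk is pinned inside the band for the whole window, so the hypergeometric $Y_i$ has parameters bounded from below and the drift is genuinely negative throughout. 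Short windows $j\le \tfrac{\epsilon(n-k)}{4(1+\epsilon)}$ are handled by your negative-binomial domination (the gap $\sup I_{k+j}-\inf I_k$ exceeds the mean of $\negbin(j,p)$ by a constant factor); long windows are handled by coupling the increments with independent $X^*_i\sim\geo(p)$ and fixed-parameter hypergeometrics $Y^*_i$, then applying the concentration bounds of Lemma~\ref{lem:manybounds} to both sums. This localisation to the band is exactly what makes the drift argument legitimate, and it is the missing idea in your sketch.
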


\begin{proof}

Let
    $$
    E = \bigcup_{k = 0}^{n-\ell} \left\{ M_k \geq (1+\epsilon)\frac{(1-p)(n-k)}{p} \right\}.
    $$ For all $0 \leq k \leq n$, set
    $$
    I_k = \left[\left(1 + \frac{\epsilon}{2}\right)\frac{(1-p)(n-k)}{p}, \left(1 + \epsilon\right)\frac{(1-p)(n-k)}{p}\right].
    $$ 
    For each $1 \leq k \leq n - \ell$ and $1 \leq j \leq n-\ell-k$, let $A_{k,j}$ denote the event that the following three conditions (i)-(iii) hold:
    \begin{enumerate}
        \item $M_{i} \in I_i$ for $k < i < k+j$,
        \item $M_{k + j} \geq \left(1 + \epsilon\right)\frac{(1-p)(n-k-j)}{p} = \sup I_{k+j}$, and
        \item $M_{k} \leq \left(1 + \frac{\epsilon}{2}\right)\frac{(1-p)(n-k)}{p} = \inf I_{k}$.
    \end{enumerate}
    Since $M_0 = 0$, if $E$ occurs then there must be some $1 \leq k \leq n-\ell$ and $1 \leq j \leq n-\ell - k$ such that $A_{k,j}$ occurs. Set 
        $$
        \Delta_{k,j} := \sup I_{k+j}-\inf I_k = \frac{\epsilon(1-p)(n-k)}{2p} - \frac{(1+\epsilon)(1-p)j}{p},
        $$
    and $T_k = \frac{\epsilon(n-k)}{4(1+\epsilon)}$. We split the bounding of $\prob(E)$ into two cases with the union bound,
        \begin{align}\label{eq:unionboundlemma3.2}
        \prob(E) \leq \underbrace{\sum_{k=1}^{n-\ell}\sum_{j = 1}^{T_k} \prob(A_{k,j})}_{:=(I)} + \underbrace{\sum_{k=1}^{n-\ell}\sum_{j = T_k+1}^{n-\ell-k} \prob(A_{k,j})}_{:=(II)}.
        \end{align}
    We shall bound $(I) $ and $(II)$ separately, beginning with $(I)$. One can show by a brief computation that $\Delta_{k,j} \geq \frac{(1+\epsilon)(1-p)T_k}{p}$ for $(k,j) \in \{ (i_1,i_2) : 1 \leq i_1 \leq n-\ell, \ 1 \leq i_2 \leq T_k \}$. 
\noindent Indeed, for $1 \leq k \leq n-\ell$ and $1 \leq j \leq T_k$, the map $j \mapsto \Delta_{k,j}$ is decreasing, so $ \Delta_{k,j} \geq \Delta_{k,T_k}$. Using the definitions of $\Delta_{k,j}$ and 
$T_k$, 
we then compute
\begin{align*}
\Delta_{k,j}
&\geq \frac{\epsilon(1-p)(n-k)}{2p} - \frac{(1+\epsilon)(1-p)T_k}{p} \\
&= \frac{1-p}{p}\left(\frac{\epsilon(n-k)}{2} - (1+\epsilon)T_k\right) \\
&= \frac{1-p}{p}\left(\frac{\epsilon(n-k)}{2} - (1+\epsilon)\cdot\frac{\epsilon(n-k)}{4(1+\epsilon)}\right) \\
&= \frac{1-p}{p}\cdot \frac{\epsilon(n-k)}{4}
= \frac{(1+\epsilon)(1-p)T_k}{p}.
\end{align*}
    
    By using the characterization of $(M_k)_{k=0}^{n-2}$ given in Lemma~\ref{lem:recursion}, we have that $M_{k+j} - M_j$ is stochastically dominated by a sum of $j$ independent $\geo(p)$--distributed random variables, which is $\negbin(j,p)$--distributed. Using this along with the negative binomial bound from Lemma~\ref{lem:manybounds} we get that, for $1 \leq j \leq T_k$,
        \begin{align*}
        \prob(A_{k,j}) \leq \prob\left( \negbin(T_k,p) \geq \frac{(1+\epsilon)(1-p)T_k}{p} \right) \leq 2\exp\left( -\frac{(1-p)^2\epsilon^2T_k}{6} \right).
        \end{align*}
    Since $p \leq \eta$, we may compress all of the constants into some $c_1 = c_1(\epsilon,\eta) > 0$ to get
    \begin{align*}
        (I) &\leq \frac{\epsilon}{2(1+\epsilon)}\sum_{k=1}^{n-\ell}(n-k)\exp\left(-c_1(n-k)\right) \\
        &\leq \frac{\epsilon}{2(1+\epsilon)}\sum_{k=\ell}^\infty k\exp\left( -c_1k\right).
    \end{align*}
    Doing a routine comparison of the above sum with an integral we can obtain a second constant $c_2 = c_2(\epsilon,\eta) > 0$ such that
    \begin{equation}\label{eq:Final_I}
        (I) \leq c_2\ell e^{-c_1 \ell}.
    \end{equation}
    To bound $(II)$, we need to consider the edges that are removed during the complement process as well. Essential to proceeding computations is the following claim that bounds $\prob(A_{k,j})$ by the probability of an event concerning sums of i.i.d.\ random variables.

    \begin{claim}
    For $(k,j) \in \{ (i_1,i_2) : 1 \leq i_1 \leq n-\ell, \ T_{k}+1 \leq i_2 \leq n-\ell-k \}$, we have that
    \begin{equation}\label{eq:Concentration_II}
        \prob(A_{k,j}) \leq \prob\left(X^*_{k + j - 1} + \sum_{i=0}^{j-2} (X^*_{k+i} - Y^*_{k+i}) \geq \Delta_{k,j}\right),
    \end{equation}
    where all the random variables $(X_{k + i}^*,Y_{k+i}^*)_{0 \leq i \leq j-1}$ are independent, with $X_{k+i}^* \dist \geo(p)$ and 
    $$
    Y_{k+i}^* \dist \hyper\left( n-k-i-2 , \left\lfloor\frac{(1 + \epsilon/2)(1-p)(n-k-i)}{p}\right\rfloor, \binom{n-k-i}{2} \right).
    $$
    \end{claim}

    \begin{proof}
    Let $\mathcal{F}$ be the sigma algebra generated by the whole edge reveal process. First, we note by the definition of $A_{k,j}$ that
    \begin{align*}
        \prob(A_{k,j}) \leq \prob\left( \left(\bigcap_{i=1}^{j-1}\big\{ M_i \in I_i \big\}\right) \bigcap \left\{X_{k + j - 1} + \sum_{i=0}^{j-2} (X_{k+i} - Y_{k+i}) \geq \Delta_{k,j}\right\}\right).
    \end{align*}
    From here, we complete the proof with a direct coupling. We define, for all $0 \leq i \leq j-1$ conditionally given $X_{k+i}$ and $M_{k+i}$,
    $$
    X_{k+i}^* = X_{k+i} + Z_{k+i}\1_{\{ M_{k+i} + X_{k+i} = \binom{n-k-i}{2}\}},
    $$
    where $(Z_{k+i} : 0 \leq i \leq j-1)$ is a collection of independent $\geo(p)$ random variables that is also independent of $\mathcal{F}$. By the memoryless property, we have that $X_{k+i}^* \dist \geo(p)$. Recall that, when $\tau_{k+i} < \infty$, we have that $Y_{k+i} = \deg_{G^*_{\tau_{k+i}-1}}(u_{k+i})$. For all $0 \leq i \leq j-2$, if $\tau_{k+i} < \infty$ and $G^*_{G^*_{\tau_{k+i}-1}}$ has more than $\lfloor \frac{(1+\epsilon/2)(1-p)(n-k-i)}{p}\rfloor$ edges, let $Y_{k+i}^* = \deg_{H_{k+i}}(u_{k+i})$, where $H_{k+i}$ is a uniformly chosen subgraph of $G^*_{G^*_{\tau_{k+i}-1}}$ with $\lfloor \frac{(1+\epsilon/2)(1-p)(n-k-i)}{p}\rfloor$ edges. Otherwise, we just set $Y_{k+i}^*$ to be a hypergeometric random variable with our desired distribution, independent of $\mathcal{F}$. By construction, $X_{k+i}^*$ and $Y_{k+i}^*$ are independent, and have the correct distribution. Finally, since $X_{k+i} \leq X_{k+i}^*$ for all $0 \leq i \leq j-1$, and since $Y_{k+i} \geq Y^*_{k+i}$ on the event that $M_{k+i} + X_{k+i} \leq \binom{n-k-i}{2}$ (which is a subset of the event that $M_{k+i+1} \in I_{k+i+1}$), we have that
    $$
     \left(\bigcap_{i=1}^{j-1}\big\{ M_i \in I_i \big\}\right) \bigcap \left\{X_{k + j - 1} + \sum_{i=0}^{j-2} (X_{k+i} - Y_{k+i}) \geq \Delta_{k,j}\right\}
    $$
    is a subset of
    $$
    \left\{X_{k + j - 1}^* + \sum_{i=0}^{j-2} (X_{k+i}^* - Y_{k+i}^*) \geq \Delta_{k,j}\right\},
    $$
    which is enough to complete the proof.
    \end{proof}
    With the claim proven, we can begin to work on bounding $(II)$, by bounding the expression in the right side of (\ref{eq:Concentration_II}). Set, for all $1 \leq k \leq n-\ell$ and $T_k+1 \leq j \leq n-\ell-k$,
\begin{align*}
\mu_{k,j}
& = \ex\!\left[\,X^*_{k+j-1}+\sum_{i=0}^{j-2}X^*_{k+i}\right]
= \frac{j(1-p)}{p},\\
\nu_{k,j}
& = \ex\!\left[\sum_{i=0}^{j-2}Y^*_{k+i}\right]
= \sum_{i=0}^{j-2}\left(1+\frac{\epsilon}{2}\right)\frac{2(1-p)(n-k-i-2)}{p(n-k-i-1)}.
\end{align*}
From $(n-k-i-2)/(n-k-i-1)=1-\frac{1}{\,n-k-i-1\,}$ we obtain
\begin{align*}
\mu_{k,j}-\nu_{k,j}
&= \frac{j(1-p)}{p}
 -\left(1+\frac{\epsilon}{2}\right)\frac{2(1-p)}{p}
    \sum_{i=0}^{j-2}\!\left(1-\frac{1}{\,n-k-i-1\,}\right)\\
&\le \frac{1-p}{p}
   \Big(-\big((1+\epsilon)j-(2+\epsilon)\big)
       + 4\sum_{r=n-k-j+1}^{n-k}\frac{1}{r}\Big) \\
&\le \frac{1-p}{p}\Big(-\big((1+\epsilon)j-(2+\epsilon)\big)+4\log(n-k)\Big),
\end{align*}
where in the last step we used the crude bound
$\sum_{r=n-k-j+1}^{n-k}\! r^{-1}\le \log(n-k)$.

Fix
$
\delta=\epsilon/20\in(0,1)
$.
Since $\epsilon < 1$ we have $\mu_{k,j}+\nu_{k,j} \le 5j(1-p)/p$, so 
\begin{align*}
(1+\delta)\mu_{k,j}-(1-\delta)\nu_{k,j}
&\le \frac{1-p}{p}\Big(-\big((1+\epsilon-5\delta)j-(2+\epsilon)\big)+\log(n-k)\Big).
\end{align*}
Using that $j\le n-k-\ell$ and $5\delta=\epsilon/4$, as well as the definition of $\Delta_{k,j}$, we obtain 
\begin{align*}
(1+\delta)\mu_{k,j}-(1-\delta)\nu_{k,j}-\Delta_{k,j}
&\le \frac{1-p}{p}\left(5\delta j + 2+\epsilon - \frac{\epsilon}{2}(n-k) + \log(n-k)\right)\\
&\le \frac{1-p}{p}\left(3-5\delta\,\ell - \frac{\epsilon}{4}(n-k) + \log(n-k)\right).
\end{align*}
Since the linear term dominates the logarithm, there exists
$L=L(\epsilon,\eta)>0$ such that for all $s\ge L$,
$-\tfrac{\epsilon}{4}s+\log s+3\le -\tfrac{\epsilon}{8}s$.
Hence, for all $n-k\ge L$ and all $T_k\le j\le n-k-\ell$,
\begin{equation*}
(1+\delta)\mu_{k,j}-(1-\delta)\nu_{k,j}
\le
\Delta_{k,j}
-\frac{\epsilon}{8}\frac{1-p}{p}(n-k)
-\frac{5\delta(1-p)}{p}\,\ell.
\end{equation*}
(Since $n-k \ge \ell$, for the above bound to hold it suffices that $\ell \ge L$ and $T_k \le j \le n-k-\ell$.)
Consequently,
\begin{align*}
& \Big\{\sum_{i=0}^{j-1}X^*_{k+i}\le (1+\delta)\mu_{k,j}\Big\}
\bigcap
\Big\{\sum_{i=0}^{j-2}Y^*_{k+i}\ge (1-\delta)\nu_{k,j}\Big\}\\
& \subseteq
\Big\{X^*_{k+j-1}+\sum_{i=0}^{j-2}(X^*_{k+i}-Y^*_{k+i})<\Delta_{k,j}\Big\}.
\end{align*}
Thus, by \eqref{eq:Concentration_II} and a union bound, if $\ell \ge L$ and $T_k \le j \le n-k-\ell$ then 
\[
\prob(A_{k,j})
\le
\prob\!\left(\sum_{i=0}^{j-1}X^*_{k+i}\ge (1+\delta)\mu_{k,j}\right)
+
\prob\!\left(\sum_{i=0}^{j-2}Y^*_{k+i}\le (1-\delta)\nu_{k,j}\right).
\]

Lemma~\ref{lem:manybounds} yields a constant $c_3=c_3(\epsilon,\eta)>0$ such that 
\begin{equation}\label{eq:negbinbound_II_new}
\prob\!\left(\sum_{i=0}^{j-1}X^*_{k+i}\ge (1+\delta)\mu_{k,j}\right)
\le \exp\!\left(-\frac{((1-p)\delta)^2}{6}\,j\right)
\le \exp\!\left(-c_3\,(n-k)\right),
\end{equation}
since $j\ge T_k=\frac{\epsilon}{4(1+\epsilon)}(n-k)$ in case $(II)$.
Also, applying the first inequality in Lemma~\ref{lem:manybounds} to the family $(Y^*_{k+i})_{i=0}^{j-2}$, and using the crude bound
\[
\nu_{k,j}
=\sum_{i=0}^{j-2}\left(1+\frac{\epsilon}{2}\right)\frac{2(1-p)(n-k-i-2)}{p(n-k-i-1)}
\ge
\left(1+\frac{\epsilon}{2}\right)\frac{1-p}{p}\,(j-1),
\]
since $n-k \ge \ell$, 
we obtain $c_4=c_4(\epsilon,\eta)>0$ such that, for all $T_k\le j\le n-k-\ell$,
\begin{equation}\label{eq:hypergeobound_II_new}
\prob\!\left(\sum_{i=0}^{j-2}Y^*_{k+i}\le (1-\delta)\nu_{k,j}\right)
\le 2\exp\!\left(-\frac{\delta^2}{3}\,\nu_{k,j}\right)
\le 2\exp\!\left(-c_4\,(n-k)\right).
\end{equation}

\medskip\noindent\emph{Putting the pieces together.}
Combining \eqref{eq:negbinbound_II_new} and \eqref{eq:hypergeobound_II_new} gives
\begin{equation}\label{eq:combined_II}
\prob(A_{k,j})
\le \exp\!\left(-c_3(n-k)\right)+2\exp\!\left(-c_4(n-k)\right),
\end{equation}
for all $\ell \ge L\vee K_{p,\epsilon}^+$ and $T_k\le j\le n-k-\ell$.
Hence, summing over $j$ and $k$ in the contribution $(II)$ from \eqref{eq:unionboundlemma3.2}, we obtain
\begin{align*}
(II)
&\le \sum_{k=1}^{n-\ell}\sum_{j=T_k}^{n-\ell-k}\prob(A_{k,j})
 \le \sum_{k=\ell}^{\infty}k\,e^{-c_3k}
      + 2\sum_{k=\ell}^{\infty}k\,e^{-c_4k}.
\end{align*}
Using this together with \eqref{eq:Final_I} in (\ref{eq:unionboundlemma3.2}), the result follows. 
\end{proof}

Since $M_0 = 0$, proving a lower bound matching the one provided in Lemma \ref{lem:upper_tail_bound} requires a different approach. Specifically, we are faced with the new problem of verifying that $M_k$ ever gets within $\frac{\epsilon(1-p)(n-k)}{p}$ of the desired value of $\frac{(1-p)(n-k)}{p}$. The next two lemmas combine to show this.

\begin{lemma}\label{lem:expectation_bounds_for_Mk}
For all $\epsilon,\eta \in (0,1)$ there exists $L >0$ such that, for all $p \in (0,\eta)$ and integers $\ell,n$ with $n\ge \ell\ge L\vee K_{p,\epsilon}^+$,
\[
\ex[M_{n-\ell}]\ \ge\  \frac{(1-\epsilon)(1-p)}{p}\,\ell\Big(1-\frac{\ell-1}{n-1}\Big).
\]
\end{lemma}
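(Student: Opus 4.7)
The plan is to derive a recursive inequality for $a_k := \ex[M_k]$ via the one-step dynamics of Lemma~\ref{lem:recursion}, telescope it to a summation, and bound the result using Lemma~\ref{lem:upper_tail_bound}.

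For the recursion, Lemma~\ref{lem:recursion} together with the formula for the hypergeometric mean gives $\ex[Y_k \mid M_k] \leq (n-k-2)(M_k + \ex[X_k \mid M_k])/(\binom{n-k}{2}-1)$, and the identity $\binom{n-k}{2}-1 = (n-k+1)(n-k-2)/2$ simplifies this to $\ex[Y_k\mid M_k] \leq 2(M_k + \ex[X_k\mid M_k])/(n-k+1)$. Combining with $M_{k+1} = M_k + X_k - Y_k$ yields
\[ a_{k+1} \geq \big(a_k + \ex[X_k]\big)\cdot \frac{n-k-1}{n-k+1}. \]
Setting $\gamma_k = (n-k)(n-k+1)$, this becomes $a_{k+1}/\gamma_{k+1} \geq a_k/\gamma_k + \ex[X_k]/\gamma_k$, which telescopes (using $a_0 = 0$, $\gamma_{n-\ell} = \ell(\ell+1)$) to
\[ a_{n-\ell} \geq \ell(\ell+1) \sum_{k=0}^{n-\ell-1} \frac{\ex[X_k]}{(n-k)(n-k+1)}. \]
If $\ex[X_k]$ were exactly $(1-p)/p$ for every $k$, the inner sum telescopes to $\frac{1}{\ell+1}-\frac{1}{n+1}$, giving the ``idealized'' bound $\tfrac{(1-p)}{p}\cdot \ell(n-\ell)/(n+1) = \tfrac{(1-p)}{p}\ell(1-(\ell-1)/(n-1))\cdot(n-1)/(n+1)$; the last factor is at least $1-\epsilon/2$ once $n \geq 4/\epsilon$.

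The remaining task is therefore to show $\ex[X_k] \geq (1-\delta)(1-p)/p$ for a suitable small $\delta = \delta(\epsilon)$, for all $k$ with $n-k \geq \ell$. Lemma~\ref{lem:recursion} gives $\ex[X_k\mid M_k] = \tfrac{1-p}{p}(1-(1-p)^{\binom{n-k}{2}-M_k})$, which is decreasing in $M_k$. Picking the threshold $M^* = (1+\epsilon/2)(1-p)(n-k)/p$, Lemma~\ref{lem:upper_tail_bound} (applied with parameter $\epsilon/2$) bounds $\prob(M_k > M^*) \leq Ce^{-c\ell}$, and on the complementary event I obtain a lower bound involving $(1-p)^{\binom{n-k}{2} - M^*}$ with exponent $\tfrac{n-k}{2}\big(n-k-1-2(1+\epsilon/2)(1-p)/p\big)$.

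The main obstacle is to verify that this exponent is large enough to make $(1-p)^{(\cdot)}$ negligible uniformly over $p \in (0,\eta)$ in the range $n-k \geq \ell \geq L\vee K_{p,\epsilon}^+$. For $p$ small, the slack $K_{p,\epsilon}^+ - K_{p,\epsilon/2}^+$ grows linearly in $(1-p)/p$, so $p$ times the exponent grows linearly in $n-k$ and truncation is negligible. For $p$ bounded away from zero, both $(1-p)/p$ and $K_{p,\epsilon}^+$ are controlled by constants depending only on $\epsilon$ and $\eta$, and one takes $L$ large enough that $\ell \geq L$ exceeds $K_{p,\epsilon}^+$ by a sufficient additive margin. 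In either regime a short computation gives $\ex[X_k] \geq (1-\delta)(1-p)/p$, and substituting back into the telescoped sum completes the proof.
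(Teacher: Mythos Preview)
Your proposal is correct and follows essentially the same approach as the paper: derive a one-step recursion for $\ex[M_k]$ from Lemma~\ref{lem:recursion}, control the truncation error in $\ex[X_k]$ via Lemma~\ref{lem:upper_tail_bound}, and telescope. The only cosmetic difference is that you use the exact hypergeometric mean $\tfrac{2}{n-k+1}(M_k+X_k)$ and the substitution $a_k/\gamma_k$ with $\gamma_k=(n-k)(n-k+1)$ to telescope directly, whereas the paper uses the slightly looser $\tfrac{2}{n-k}(M_k+X_k)$ and solves the resulting recursion by subtracting the particular solution $c(n-k)$ and then telescoping the product $\prod_{i}(1-\tfrac{2}{n-i})=\tfrac{\ell(\ell-1)}{n(n-1)}$.
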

\begin{proof}
Let $\cF_k=\sigma\!\big((X_j,Y_j)_{j<k}\big)$ and recall that $M_{k+1}=M_k+X_k-Y_k$. As in the exposure process,
\[
\ex\!\left[Y_k\,\middle|\,\cF_k,X_k\right]\ \le\ \frac{2}{n-k}\,(M_k+X_k),
\]
and $X_k$ is a $\geo(p)$ truncated at $B_k:=\binom{n-k}{2}-M_k$, so
\[
\ex[X_k\mid\cF_k] = \frac{1-p}{p}\,\big(1-p^{B_k}\big).
\]
Therefore, for $0\le k\le n-2$,
\begin{align}
\ex[M_{k+1}\mid\cF_k]
&= M_k+\ex\!\left[X_k-\ex\!\left[Y_k\,\middle|\,\cF_k,X_k\right]\middle|\,\cF_k\right] \notag\\
&\ge \Big(1-\frac{2}{n-k}\Big)M_k+\frac{1-p}{p}\Big(\,1-\frac{2}{n-k}-p^{B_k}\Big). \label{eq:cond-step}
\end{align}

Choose $L_1$ so that $\frac{2}{n-k}\le \epsilon/2$ whenever $n-k\ge L_1$. Next, with
\[
t_k:=\Big(1+\frac{\epsilon}{2}\Big)\frac{(1-p)(n-k)}{p},
\]
we have
\[
\ex[p^{B_k}]\ =\ \ex[p^{B_k}\mathbf{1}_{\{M_k\ge t_k\}}]+\ex[p^{B_k}\mathbf{1}_{\{M_k<t_k\}}]
\ \le\ \prob(M_k\ge t_k)+p^{{n-k \choose 2}-t_k}.
\]
By Lemma~\ref{lem:upper_tail_bound}, there exist $C,c,L_2>0$ such that $\prob(M_k\ge t_k)\le Ce^{-c(n-k)}$ for $n-k\ge L_2 \vee K_{p,\epsilon}^+$. Moreover,
\[
\binom{n-k}{2}-t_k=\binom{n-k}{2}-\Big(1+\frac{\epsilon}{2}\Big)\frac{(1-p)(n-k)}{p}
\ \ge\ \frac{\epsilon(1-p)}{2p}\,(n-k)
\]
whenever $n-k\ge \frac{2(1+\epsilon)(1-p)}{p}+1$, hence $p^{B_k-t_k}\le e^{-c'(n-k)}$ (since $p\le\eta<1$). Thus there exists $L_3$ such that
\[
\ex[p^{B_k}] \le \epsilon/2\qquad\text{whenever } n-k\ge L_3\vee \frac{2(1+\epsilon)(1-p)}{p}.\]
Taking expectations in \eqref{eq:cond-step} and using the two bounds above, we obtain for all such $k$,
\begin{equation}\label{eq:main-rec}
\ex[M_{k+1}]
\ \ge\ \Big(1-\frac{2}{n-k}\Big)\ex[M_k]+\frac{(1-\epsilon)(1-p)}{p}.
\end{equation}

Set $c:=\frac{(1-\epsilon)(1-p)}{p}$ and $b_k:=1-\frac{2}{n-k}$. A particular solution of the recurrence $a_{k+1}=b_ka_k+c$ is $a_k^*:=c(n-k)$.  Writing $d_k:=\ex[M_k]-a_k^*$, \eqref{eq:main-rec} gives $d_{k+1}\ge b_k d_k$. With $M_0=0$ we have $d_0=-cn$, and therefore
\[
\ex[M_k]\ \ge\ c(n-k)-c n\prod_{i=0}^{k-1}\Big(1-\frac{2}{n-i}\Big).
\]
For $k=n-\ell$ this product telescopes to yield 
\[
\prod_{i=0}^{n-\ell-1}\Big(1-\frac{2}{n-i}\Big)
=\prod_{j=\ell+1}^{n}\frac{j-2}{j}
=\frac{\ell(\ell-1)}{n(n-1)}.
\]
Hence, for all $n\ge \ell\ge L\vee K_{p,\epsilon}^+$ with $L:=L_1\vee L_3$,
\[
\ex[M_{n-\ell}]
\ \ge\ c\Big(\ell-\frac{\ell(\ell-1)}{n-1}\Big)
=\frac{(1-\epsilon)(1-p)}{p}\,\ell\Big(1-\frac{\ell-1}{n-1}\Big),
\]
which is the stated bound. 
\end{proof}

Combining the previous lemma with the exponential tail bound from Lemma \ref{lem:upper_tail_bound} we can prove that $M_k$ will cross above $\frac{(1-\epsilon)(1-p)(n-k)}{p}$ before the edge reveal process terminates.

\begin{comment}
\begin{lemma}
    Let $p = p(n) < 1$ be chosen such that $p \not\to 1$ and $np^2 \to \infty$ as $n \to \infty$. For any $\delta,\epsilon > 0$, there exists $L,N \geq 0$ such that, for any $n \geq N$ that satisfies $ K_{p,\epsilon}^+ \geq L$ we have
    $$
        \prob\left(M_{n-K_{p,\epsilon}^+} \leq \frac{(1-\epsilon)(1-p)K_{p,\epsilon}^+}{p} \right) \leq \delta.
    $$
\end{lemma}
\end{comment}

\begin{lemma}\label{mnlowerbound}
    Let $\eta,\epsilon,\delta \in (0,1)$. There exists $L \geq 0$ such that, for any $n \geq 0$ and any $p \in (0,\eta)$ such that $ K_{p,\epsilon}^+ \geq L$, we have,
    $$
        \prob\left(M_{n-K_{p,\epsilon}^+} \leq \frac{(1-\epsilon)(1-p)K_{p,\epsilon}^+}{p}\right) \leq \delta + \frac{1}{2\epsilon}\left(\frac{K_{p,\epsilon}^+ - 1}{n-1}\right).
    $$
\end{lemma}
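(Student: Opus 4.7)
The plan is to apply a one-sided Markov inequality to the nonnegative random variable $U - M_{n-\ell}$, where $\ell = K_{p,\epsilon}^+$ and $U$ is a convenient deterministic upper bound. Set $T = \frac{(1-\epsilon)(1-p)\ell}{p}$ (the threshold appearing in the lemma) and $U = \frac{(1+\epsilon)(1-p)\ell}{p}$. The definition of $K_{p,\epsilon}^+$ gives $\ell - 1 \le \frac{2(1+\epsilon)(1-p)}{p}$, which in turn forces $\binom{\ell}{2} = \frac{\ell(\ell-1)}{2} \le U$; hence $M_{n-\ell} \le U$ deterministically. By the one-sided Markov inequality,
\[
\prob\!\left(M_{n-\ell} \le T\right) = \prob\!\left(U - M_{n-\ell} \ge U - T\right) \le \frac{U - \ex[M_{n-\ell}]}{U - T},
\]
and the point of this choice of $U$ is that $U - T = \frac{2\epsilon(1-p)\ell}{p}$, which is exactly the denominator that produces the coefficient $\frac{1}{2\epsilon}$ on $\frac{\ell-1}{n-1}$ in the target estimate.

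To bound the numerator I would apply Lemma \ref{lem:expectation_bounds_for_Mk} with an auxiliary parameter $\epsilon^\ast \in (0,\epsilon)$. Its hypotheses $\ell \ge K_{p,\epsilon^\ast}^+$ and $\ell \ge L(\epsilon^\ast,\eta)$ are satisfied once the $L$ of the present lemma is chosen large enough, since $K_{p,\cdot}^+$ is monotone and so $K_{p,\epsilon^\ast}^+ \le K_{p,\epsilon}^+ = \ell$. Substituting the resulting lower bound on $\ex[M_{n-\ell}]$ and simplifying yields
\[
\prob\!\left(M_{n-\ell} \le T\right) \le \frac{\epsilon+\epsilon^\ast}{2\epsilon} + \frac{1-\epsilon^\ast}{2\epsilon}\cdot\frac{\ell-1}{n-1}.
\]
The coefficient of $\frac{\ell-1}{n-1}$ is at most $\frac{1}{2\epsilon}$, exactly as required.

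The main obstacle is bringing the constant term $\frac{\epsilon+\epsilon^\ast}{2\epsilon}$ below an arbitrary $\delta \in (0,1)$: Markov alone bounds this term by $\frac{1}{2}+\frac{\epsilon^\ast}{2\epsilon}$, so the plain argument above only covers $\delta > \tfrac{1}{2}$ (taking $\epsilon^\ast \le (2\delta-1)\epsilon$). For smaller $\delta$ the Markov step must be supplemented by concentration. Writing $M_{n-\ell} = \sum_{k<n-\ell}(X_k - Y_k)$ and coupling each $X_k$ to an independent (untruncated) $\geo(p)$ lower bound and each $Y_k$ to an independent hypergeometric upper bound, in the same spirit as in the proof of Lemma \ref{lem:upper_tail_bound}, one obtains from the negative-binomial lower-tail and sum-of-hypergeometrics Chernoff bounds in Lemma \ref{lem:manybounds} an exponential estimate $\prob\!\left(M_{n-\ell} \le (1-\epsilon^\ast)\ex[M_{n-\ell}]\right) \le Ce^{-c\ell}$. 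This dominates the Markov slack once $L = L(\delta,\epsilon,\eta)$ is chosen large enough, yielding the claimed bound for all $\delta \in (0,1)$.
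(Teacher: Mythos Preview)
Your Markov-inequality skeleton is exactly the right idea and is what the paper does, but two of your steps do not go through as written.

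\textbf{The deterministic cap $M_{n-\ell}\le U$ is false.} You deduce $M_{n-\ell}\le U$ from $\binom{\ell}{2}\le U$, implicitly using $M_{n-\ell}\le\binom{\ell}{2}$. But by definition $M_k=N_{\tau_k\wedge K^*}$: if the process terminates at step $J^*<n-\ell$ (i.e.\ there end up being more than $\ell$ trees), then $M_{n-\ell}=\binom{n-J^*}{2}>\binom{\ell}{2}$, and this can exceed $U$. The paper notes this explicitly (``$M_{n-K_{p,\epsilon}^+}$ can only take values in $\{\binom{k}{2}:k\ge K_{p,\epsilon}^+\}$ on the event $M_{n-K_{p,\epsilon}^+}\ge\binom{K_{p,\epsilon}^+}{2}$''). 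The cure is to replace plain Markov by the paper's Lemma~\ref{lem:basic_bound},
\[
\prob(X\le a)\le\frac{1}{b-a}\big(b+\ex[X:X\ge b]-\ex[X]\big),
\]
and then bound the overshoot $\ex[M_{n-\ell}:M_{n-\ell}\ge b]$ via the upper-tail Lemma~\ref{lem:upper_tail_bound}. This is not a cosmetic change: without the overshoot control there is no valid Markov step.

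\textbf{The $\delta>\tfrac12$ barrier and your concentration fix.} You correctly observe that your choice $U=(1+\epsilon)\tfrac{(1-p)\ell}{p}$ forces the constant term above $\tfrac12$. The paper avoids this by taking $b=(1+\epsilon\delta)\tfrac{(1-p)\ell}{p}$, so $b-a=\epsilon(1+\delta)\tfrac{(1-p)\ell}{p}$ while $b-\ex[M_{n-\ell}]$ is only $O(\epsilon\delta)\tfrac{(1-p)\ell}{p}$; after absorbing the (small) overshoot term this yields the constant $\le\delta$ directly, with no concentration step needed. Your proposed alternative, a Chernoff-type lower tail via ``coupling each $X_k$ to an independent untruncated $\geo(p)$ lower bound'', does not work: the untruncated geometric is an \emph{upper} bound for the truncated $X_k$, which is the wrong direction for a lower tail on $\sum X_k$. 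Getting a usable lower bound on $X_k$ requires first ruling out that $M_k$ is large enough for truncation to matter, i.e.\ again invoking Lemma~\ref{lem:upper_tail_bound}; at that point you are essentially redoing the paper's overshoot argument.

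In short: keep the Markov framework, but move the cutoff from $U=(1+\epsilon)\cdots$ down to $b=(1+\epsilon\delta)\cdots$, drop the false deterministic bound, and pay for both by controlling $\ex[M_{n-\ell}\mathbf{1}_{M_{n-\ell}\ge b}]$ with the exponential upper-tail estimate.
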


In the proof, we use the following basic probability fact.

\begin{lemma}\label{lem:basic_bound}
    Let $X$ be a random variable with $\ex[X] \in \R$, and let $a,b \in \R$. Then,
    $$
    \prob(X \leq a) \leq \frac{1}{b-a}\Big( b + \ex[X  :  X \geq b] - \ex[X] \Big)
    $$
\end{lemma}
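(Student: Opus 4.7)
The plan is as follows. The inequality is really just Markov's inequality dressed up to take advantage of having some information about the upper tail of $X$, so the proof will be a short decomposition of the expectation plus two one-sided bounds. The implicit assumption is $b>a$ (otherwise the quantity $\frac{1}{b-a}$ on the right-hand side is meaningless, and Lemma~\ref{mnlowerbound} is precisely the application where we take $a$ just below $\ex[X]$ and $b$ well above it).

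First I would split the expectation into three pieces along the thresholds $a$ and $b$:
$$
\ex[X]\;=\;\ex[X;\,X\le a]+\ex[X;\,a<X<b]+\ex[X;\,X\ge b].
$$
Rearranging, the numerator on the right-hand side of the lemma equals
$$
b+\ex[X;\,X\ge b]-\ex[X]\;=\;b-\ex[X;\,X\le a]-\ex[X;\,a<X<b].
$$

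Next I would apply the trivial pointwise bounds $\ex[X;\,X\le a]\le a\prob(X\le a)$ and $\ex[X;\,a<X<b]\le b\prob(a<X<b)$ (valid on the respective events). Substituting these and using $\prob(a<X<b)=1-\prob(X\le a)-\prob(X\ge b)$, the right-hand side becomes
$$
b+\ex[X;\,X\ge b]-\ex[X]\;\ge\;b\bigl(\prob(X\le a)+\prob(X\ge b)\bigr)-a\prob(X\le a)\;\ge\;(b-a)\prob(X\le a).
$$
Dividing by $b-a>0$ yields the claimed inequality.

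There is no real obstacle: the only mild subtlety is keeping the partition of the probability space consistent when substituting the one-sided bounds, which the calculation above handles in one line. Note that the lemma is sharp in the sense that equality is attained when $X$ takes only the values $a$ and $b$, which is reassuring for its later use: in Lemma~\ref{mnlowerbound} we will feed in the lower bound on $\ex[M_{n-K_{p,\epsilon}^+}]$ from Lemma~\ref{lem:expectation_bounds_for_Mk} and control $\ex[X;\,X\ge b]$ via the exponential upper-tail estimate from Lemma~\ref{lem:upper_tail_bound}, with $b$ chosen near $\frac{(1+\epsilon)(1-p)K_{p,\epsilon}^+}{p}$ so that $b-a$ is of the correct order $\frac{\epsilon(1-p)K_{p,\epsilon}^+}{p}$.
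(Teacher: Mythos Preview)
Your proof is essentially the same as the paper's: both start from $\ex[X]\le a\prob(X\le a)+b\prob(a<X<b)+\ex[X:X\ge b]$ and rearrange. The only difference is that the paper bounds $\prob(a<X<b)\le 1-\prob(X\le a)$ directly, whereas you use the exact identity $\prob(a<X<b)=1-\prob(X\le a)-\prob(X\ge b)$ and then drop the leftover term $b\,\prob(X\ge b)$; that last step tacitly assumes $b\ge 0$, which the lemma does not require (though it holds in the application). Replacing your exact identity by the paper's cruder bound removes this restriction with no loss.
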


\begin{proof}
    By the definition of the expectation we have,
    $$
    \ex[X] \leq a\prob(X \leq a) + b\prob(a < X < b) + \ex[X : X \geq b].
    $$
    Upper bounding $\prob(a < X < b)$ with $1 - \prob(X \leq a)$ and then re-arranging terms gives the desired result.
\end{proof}

\begin{proof}[Proof of Lemma~\ref{mnlowerbound}]
    Let
    $$
    a = \frac{(1-\epsilon)(1-p)K_{p,\epsilon}^+}{p}, \ \ b = \frac{(1+\epsilon\delta)(1-p)K_{p,\epsilon}^+}{p}.
    $$
    Our goal is to apply Lemma~\ref{lem:basic_bound} to obtain our desired result, and towards that we need to bound $\ex[M_{n-K_{p,\epsilon}^+}]$ and $\ex[M_{n-K_{p,\epsilon}^+} : M_{n-K_{p,\epsilon}^+} \geq b]$. By Lemma~\ref{lem:expectation_bounds_for_Mk} we have an $L_1 > 0$ such that, when $p$ is such that $K_{p,\epsilon}^+ \geq L_1$,
    \begin{equation}\label{eq:mnlowerbound_applying_expectation}
    \E[M_{n-K_{p,\epsilon}^+}] \geq \frac{\left(1 - \frac{\epsilon\delta}{2}\right)(1-p)}{p}K_{p,\epsilon}^+\left( 1 - \frac{K_{p,\epsilon}^+ -1}{n-1} \right)
    \end{equation}
    By Lemma \ref{lem:upper_tail_bound} there exists $c,C,L_1$ such that 
    \begin{equation}\label{eq:Mk_contradiction_2}
    \prob\left( M_{n-\ell} \geq \left(1 + \frac{\epsilon\delta}{2}\right)\frac{(1-p)\ell}{p} \right) \leq Ce^{-c\ell}.
    \end{equation}
    whenever $\ell \geq L_2 \vee K_{p,\epsilon\delta/2}^+$.
    Using the fact that $M_{n-K_{p,\epsilon}^+}$ can only take values in $\{\binom{k}{2} : k \geq K_{p,\epsilon}^+\}$ on the event that $M_{n-K_{p,\epsilon}^+} \geq \binom{K_{p,\epsilon}^+}{2}$, we have,
    \begin{align}
        &\ex\left[M_{n-K_{p,\epsilon}^+} \1_{\left\{M_{n-K_{p,\epsilon}^+} \geq b\right\}}\right] \notag\\
        \leq& \sum_{j = b}^{\binom{K_{p,\epsilon}^+}{2}} j\prob\left(M_{n-K_{p,\epsilon}^+} = j \right) + \sum_{j = K_{p,\epsilon}^++1}^n \binom{j}{2}\prob\left(M_{n-K_{p,\epsilon}^+} = \binom{j}{2}\right). \label{eq:mnlowerbound_proof_bound_one}
    \end{align}
    The event $\{ M_{n-K_{p,\epsilon}^+} = \binom{j}{2} \}$ is exactly the event that the edge reveal process terminates between the $n-j$ coalescing time and $n-j-1$ coalescing time. Then, since 
    $$
    \binom{j}{2} \geq \left(1 + \frac{\epsilon\delta}{2}\right)\frac{(1-p)j}{p}
    $$
    for all $j \geq K_{p,\epsilon\delta/2}^+ + 1$, we can bound each of the terms in the second sum in (\ref{eq:mnlowerbound_proof_bound_one}) to obtain,
    \begin{align*}
        &\ex\left[M_{n-K_{p,\epsilon}^+} \1_{\left\{M_{n-K_{p,\epsilon}^+} \geq b\right\}}\right] \\
        \leq& \binom{K_{p,\epsilon}^+}{2}\prob\left(M_{n-K_{p,\epsilon}^+} \geq b \right) + \sum_{j = K_{p,\epsilon}^++1}^n \binom{j}{2}\prob\left(M_{n-j} \geq \left( 1 + \frac{\epsilon\delta}{2} \right)\frac{(1-p)j}{p}\right) \\
        \leq& \sum_{j = K_{p,\epsilon}^+}^n \binom{j}{2}\prob\left(M_{n-j} \geq \left( 1 + \frac{\epsilon\delta}{2} \right)\frac{(1-p)j}{p}\right).
    \end{align*}
    Then, using (\ref{eq:Mk_contradiction_2}) we get,
    \begin{align*}
        \ex\left[M_{n-K_{p,\epsilon}^+} \1_{\left\{M_{n-K_{p,\epsilon}^+} \geq b\right\}}\right] \leq C\sum_{j = K_{p,\epsilon}^+}^\infty j^2e^{-cj}.
    \end{align*}
    Since the upper bound above tends to zero as $K_{p,\epsilon}^+ \to \infty$, by potentially increasing $L_2$ if needed we obtain that, when $K_{p,\epsilon}^+ \geq L_2$,
    \begin{equation}\label{eq:Mk_contradiction_last_one_for_real}
        \ex\left[M_{n-K_{p,\epsilon}^+} \1_{\left\{M_{n-K_{p,\epsilon}^+} \geq b\right\}}\right] \leq \frac{\epsilon\delta}{2}\frac{(1-p)K_{p,\epsilon}^+}{p}.
    \end{equation}
    Applying Lemma~\ref{lem:basic_bound} with the bounds (\ref{eq:mnlowerbound_applying_expectation}) and (\ref{eq:Mk_contradiction_last_one_for_real}) we get, whenever $K_{p,\epsilon}^+ \geq L_1 \vee L_2$,
    $$
    \prob(M_{n-K_{p,\epsilon}^+} \leq a) \leq \frac{1}{\epsilon(1+\delta)}\left( \left(1 + \epsilon\delta\right) -  \left( 1 - \frac{\epsilon\delta}{2}\right)\left(1 - \frac{K_{p,\epsilon}^+ - 1}{n-1}\right) + \frac{\epsilon\delta}{2} \right).
    $$
    Since $\delta,\epsilon \in (0,1)$, we can bound the expression on the right to get
    $$
    \prob(M_{n-K_{p,\epsilon}^+} \leq a) \leq \delta + \frac{1}{2\epsilon}\left(\frac{K_{p,\epsilon}^+ - 1}{n-1}\right).
    $$
\end{proof}

\begin{lemma}
    Let $\epsilon,\eta,\delta \in (0,1)$. There exists an $L \geq 0$ such that, for any $n \geq 0$ and any $p \in (0,\eta)$ such that $K_{p,\epsilon}^- \geq L$, we have,
    $$
        \prob\left(M_{n-K_{p,\epsilon}^-} \leq \binom{K_{p,\epsilon}^-}{2}\right)  \leq \delta + \frac{1}{2\epsilon}\left( \frac{K_{p,\epsilon}^+-1}{n-1} \right).
    $$
\end{lemma}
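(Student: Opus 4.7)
The plan is to mirror the proof of Lemma~\ref{mnlowerbound} by invoking Lemma~\ref{lem:basic_bound}, this time with $a=\binom{K_{p,\epsilon}^-}{2}$ and $b=(1+\epsilon\delta)(1-p)K_{p,\epsilon}^-/p$. A short check using $K_{p,\epsilon}^-\approx 2(1-\epsilon)(1-p)/p$ shows that $b>a$ with $b-a$ of order $\epsilon(1-p)K_{p,\epsilon}^-/p$, which is the correct magnitude to produce the factor $\tfrac{1}{2\epsilon}$ on the right-hand side of the claim once one divides by $b-a$ in Lemma~\ref{lem:basic_bound}.

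The central task is to lower-bound $\ex[M_{n-K_{p,\epsilon}^-}]$. Lemma~\ref{lem:expectation_bounds_for_Mk} does not apply directly, since its hypothesis demands $\ell\geq K_{p,\epsilon}^+$ whereas here we want $\ell=K_{p,\epsilon}^-$. The plan is to adapt its proof: iterate the recursion $\ex[M_{k+1}]\geq(1-\tfrac{2}{n-k})\ex[M_k]+\tfrac{1-p}{p}(1-\tfrac{2}{n-k}-\ex[p^{B_k}])$ all the way up to $k=n-K_{p,\epsilon}^-$. For $n-k\geq K_{p,\epsilon}^+$ the error term $\ex[p^{B_k}]$ is negligible by Lemma~\ref{lem:upper_tail_bound}, exactly as in the proof of Lemma~\ref{lem:expectation_bounds_for_Mk}. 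For the short remaining range $n-k\in[K_{p,\epsilon}^-,K_{p,\epsilon}^+)$, of length $K_{p,\epsilon}^+-K_{p,\epsilon}^-$, the plan is to simply discard the $\ex[p^{B_k}]$ contribution and retain the positive drift $(1-p)/p$ together with the contractive factor $(1-2/(n-k))$. A telescoping calculation over this short range produces a bound of the form $\ex[M_{n-K_{p,\epsilon}^-}]\geq\binom{K_{p,\epsilon}^-}{2}(1+\Theta(\epsilon^2))(1-(K_{p,\epsilon}^+-1)/(n-1))$; the crucial numerical point is that $(1-\epsilon)^2+2\epsilon=1+\epsilon^2$, so after telescoping the two contributions combine to yield a strictly positive margin of order $\epsilon^2$ above $\binom{K_{p,\epsilon}^-}{2}$.

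For the tail expectation $\ex[M_{n-K_{p,\epsilon}^-}\,\1_{\{M_{n-K_{p,\epsilon}^-}\geq b\}}]$, the approach is the same as in the proof of Lemma~\ref{mnlowerbound}: partition by the termination time $J^*$, use that on trajectories with $J^*<n-K_{p,\epsilon}^-$ the walk is frozen at $M_{J^*}$ with $M_{J^*}\le \binom{n-J^*+1}{2}$, and apply Lemma~\ref{lem:upper_tail_bound} with a perturbed parameter (e.g.\ $\epsilon\delta/2$) to bound each termination time's contribution. This yields an upper bound of the form $C\sum_{j\geq K_{p,\epsilon}^-}j^2e^{-cj}$, which decays to zero as $K_{p,\epsilon}^-\to\infty$ and can therefore be absorbed into the term $\delta\cdot(b-a)$ coming from Lemma~\ref{lem:basic_bound}. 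Combining the two estimates through Lemma~\ref{lem:basic_bound} delivers the claim.

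The main obstacle is the telescoping in the expected-value bound: one must verify that after dropping the $\ex[p^{B_k}]$ term in the short final interval, the positive drift combined with the contractive factor $K_{p,\epsilon}^-(K_{p,\epsilon}^--1)/(K_{p,\epsilon}^+(K_{p,\epsilon}^+-1))$ still leaves a net margin of order $\epsilon^2$ above $\binom{K_{p,\epsilon}^-}{2}$. Careful handling of the floor and ceiling in the definitions of $K_{p,\epsilon}^\pm$ and of the finite-$n$ correction $(K_{p,\epsilon}^+-1)/(n-1)$ will be required so that the resulting inequality is valid for all sufficiently large $K_{p,\epsilon}^-$, producing the desired constant $L$.
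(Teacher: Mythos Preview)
Your approach differs from the paper's, and it contains a genuine gap in the step where you lower-bound $\ex[M_{n-K_{p,\epsilon}^-}]$.

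The problem is the phrase ``discard the $\ex[p^{B_k}]$ contribution and retain the positive drift $(1-p)/p$'' in the short range $n-k\in[K_{p,\epsilon}^-,K_{p,\epsilon}^+)$. In inequality~\eqref{eq:cond-step} the term $p^{B_k}$ enters with a \emph{negative} sign, so setting it to zero strengthens the lower bound illegitimately; yet Lemma~\ref{lem:upper_tail_bound} provides no control on $\ex[p^{B_k}]$ once $n-k<K_{p,\epsilon}^+$, and in this range $B_k=\binom{n-k}{2}-M_k$ may well be $O(1)$. If instead you use the only trivially available bound $p^{B_k}\le 1$, the drift term is killed entirely and you are left with
\[
\ex[M_{n-K_{p,\epsilon}^-}]\;\gtrsim\;\ex[M_{n-K_{p,\epsilon}^+}]\cdot\frac{K_{p,\epsilon}^-(K_{p,\epsilon}^--1)}{K_{p,\epsilon}^+(K_{p,\epsilon}^+-1)}\;\approx\;\frac{1}{1+\epsilon}\binom{K_{p,\epsilon}^-}{2},
\]
which lies \emph{below} your choice of $a=\binom{K_{p,\epsilon}^-}{2}$. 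Lemma~\ref{lem:basic_bound} then gives nothing. Your $\epsilon^2$-margin calculation is correct only under the invalid ``full drift'' recursion; with the valid worst-case recursion the sign flips, and the deficit is of order $\epsilon$, swamping any $\epsilon^2$ gain.

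The paper sidesteps this by abandoning the expectation recursion in the short range altogether. It first applies Lemma~\ref{mnlowerbound} (a probability bound, not an expectation bound) to obtain $M_{n-K_{p,\epsilon}^+}\ge (1-\epsilon)(1-p)K_{p,\epsilon}^+/p$ with probability at least $1-\delta/2-\tfrac{1}{2\epsilon}(K_{p,\epsilon}^+-1)/(n-1)$. It then argues pathwise over the remaining $K_{p,\epsilon}^+-K_{p,\epsilon}^-$ steps: on the event of interest the process has not frozen, so the $X_k$ can be coupled to untruncated $\geo(p)$ variables, while $Y_k\le n-k-2$ deterministically. A single negative-binomial concentration bound then shows that, conditional on the good event at step $n-K_{p,\epsilon}^+$, the walk cannot drop below $\binom{K_{p,\epsilon}^-}{2}$ except with probability at most $\delta/2$. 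This two-stage decomposition (high-probability anchor at $K_{p,\epsilon}^+$, then pathwise control) is what allows the argument to survive the loss of the $\ex[p^{B_k}]$ bound in the short range.
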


\begin{proof}
    By the previous lemma, we have some $L \geq 0$ such that, whenever satisfies $K_{p,\epsilon}^+ \geq L$,
    $$
    \prob\left(M_{n-K_{p,\epsilon}^+} \leq \frac{(1-\epsilon)(1-p)K_{p,\epsilon}^+}{p}\right) \leq \delta/2 + \frac{1}{2\epsilon}\left( \frac{K_{p,\epsilon}^+-1}{n-1} \right).
    $$
    Now, consider the event
    $$
    E = \left\{ M_{n-K_{p,\epsilon}^-} \leq \binom{K_{p,\epsilon}^-}{2}\right\} \bigcap \left\{ M_{n-K_{p,\epsilon}^+} \geq \frac{(1-\epsilon)(1-p)K_{p,\epsilon}^+}{p}\right\}.
    $$
    To finish the proof it suffices to show that there is some $L^* \geq 0$ such that, when $K_{p,\epsilon}^- \geq L^*$, we have $\prob(E) \leq \delta/2$. 
    
    If $M_{n-k} + X_{n-k}$ first exceeds $\binom{k}{2}$ at some step $k$ with $K_{p,\epsilon}^- \leq k \leq K_{p,\epsilon}^+$, then the process freezes at this value, and so
    $$
    M_{n-K_{p,\epsilon}^-} \geq \binom{K_{p,\epsilon}^-}{2},
    $$
    implying that $E$ cannot occur. One consequence of this is that we may replace $(X_{n-k} : K_{p,\epsilon}^- \leq k \leq K_{p,\epsilon}^+)$ with a collection of independent (untruncated) geometric random variables $(X_{n-k}^* : K_{p,\epsilon}^- \leq k \leq K_{p,\epsilon}^+)$ without decreasing the probability of $E$ occurring $E$. Note also that $Y_k \leq n-k-2$ deterministically for all $0 \leq k \leq n-2$. Thus, defining $E^*$ to be the event,
    $$
    \left\{ M_{n-K_{p,\epsilon}^+} + \sum_{K_{p,\epsilon}^- + 1}^{K_{p,\epsilon}^+} (X_{n-k}^* - k) \leq \binom{K_{p,\epsilon}^-}{2}\right\} \bigcap \left\{ M_{n-K_{p,\epsilon}^+} \geq \frac{(1-\epsilon)(1-p)K_{p,\epsilon}^+}{p} \right\},
    $$
    we have that $\prob(E) \leq \prob(E^*)$. Next observe that
    $$
    \binom{K_{p,\epsilon}^-}{2} \leq \frac{(1-\epsilon)(1-p)K_{p,\epsilon}^-}{p},
    $$
    and so
    \begin{equation}\label{eq:end_my_suffering_prelude}
        E^* \subseteq \left\{  \sum_{k = K_{p,\epsilon}^- + 1}^{K_{p,\epsilon}^+} (X_k^* - k) \leq \frac{(1-\epsilon)\left(K_{p,\epsilon}^- -K_{p,\epsilon}^+\right)(1-p)}{p} \right\}. 
    \end{equation}
    A quick computation gives that
    $$
        \sum_{k = K_{p,\epsilon}^- + 1}^{K_{p,\epsilon}^+} k = \binom{K_{p,\epsilon}^+}{2} - \binom{K_{p,\epsilon}^-}{2}.
    $$
    Moreover, for any $\beta \in (0,1)$, there exists $L(\beta) \geq 0$ such that
    \begin{equation}\label{eq:end_my_suffering_please}
    \left|\frac{(1-\epsilon)K_{p,\epsilon}^+(1-p)}{p\binom{K_{p,\epsilon}^+}{2}}-1\right| \leq \beta \text{ and } \left|\frac{(1-\epsilon)K_{p,\epsilon}^-(1-p)}{p\binom{K_{p,\epsilon}^-}{2}} -1\right| \leq \beta,
    \end{equation}
    when $K_{p,\epsilon}^- \geq L_2(\beta)$. Hence, for $K_{p,\epsilon}^- \geq L(1/2)$, we have
    \begin{equation}\label{eq:suffering_didnt_end}
    \sum_{k = K_{p,\epsilon}^- + 1}^{K_{p,\epsilon}^+} k + \frac{\left((1-\epsilon)K_{p,\epsilon}^- - (1-\epsilon)K_{p,\epsilon}^+\right)(1-p)}{p} \leq \frac{1}{2}\left(\binom{K_{p,\epsilon}^+}{2} - \binom{K_{p,\epsilon}^-}{2}\right).
    \end{equation}
    Moreover, (\ref{eq:end_my_suffering_please}) also gives, for $K_{p,\epsilon}^- \geq L(3/4)$,
    \begin{equation}\label{eq:lower_bound_expectation_final_lemma}
    \ex\left[ \sum_{k = K_{p,\epsilon}^- + 1}^{K_{p,\epsilon}^+} X_k^* \right] \geq \frac{3}{4}\left( \frac{1}{1-\epsilon}\binom{K_{p,\epsilon}^+}{2} - \frac{1}{1-\epsilon}\binom{K_{p,\epsilon}^-}{2} \right) \geq \frac{3}{4}\left(\binom{K_{p,\epsilon}^+}{2} - \binom{K_{p,\epsilon}^-}{2} \right).
    \end{equation}
    Using (\ref{eq:end_my_suffering_prelude}) and (\ref{eq:suffering_didnt_end}) we have
    $$
    \prob(E) \leq \prob\left( \sum_{k = K_{p,\epsilon}^- + 1}^{K_{p,\epsilon}^+} X_k^* \leq \frac{1}{2}\left(\binom{K_{p,\epsilon}^+}{2} - \binom{K_{p,\epsilon}^-}{2}\right) \right).
    $$
    Finally, applying the negative binomial bounds in Lemma \ref{lem:manybounds} along with (\ref{eq:lower_bound_expectation_final_lemma}) to the right side of the above inequality we obtain a constant $C(\eta) > 0$ such that, for $K_{p,\epsilon}^- \geq L(1/2) \vee L(3/4)$,
    \begin{equation}\label{eq:end_my_suffering_v3}
    \prob\left( E \right) \leq \exp\left(-C(\eta)\left(\binom{K_{p,\epsilon}^+}{2} - \binom{K_{p,\epsilon}^-}{2}\right)\right).
    \end{equation}
    By making $K_{p,\epsilon}^-$ large, the upper bound in (\ref{eq:end_my_suffering_v3}) can be made arbitrarily small, and so less than $\delta/2$, which completes the proof.
\end{proof}

\section{Questions and future research}\label{sec:openquestions}

The work here answers some of the basic questions one may have about the Kingman coalescent on $G_{n,p}$, however, there are still many questions worthy of study. 
One direction for future work could be to focus on the Kingman process on general graphs. We have two quite concrete questions concerning this topic.

\begin{enumerate}
    \item Does there exist a sequence of graphs $(G_n)_{n = 1}^\infty$, with $G_n$ a graph on $n$ vertices, and a function $f:\N \to \N$ with $f(n)/\log(n) \to \infty$ as $n \to \infty$ such that, with probability tending to $1$ as $n \to \infty$, $\height(F(G_n))) \geq f(n)$?
    \item Let $G = ([n] , E)$ be a graph, and let $H = ([n],E')$ be such that $E' \subseteq E$. Does the number of components in $F(H)$ stochastically dominate $F(G)$? Does $\height(F(G))$ stochastically dominate $\height(F(H))$? 
\end{enumerate}

We note that answering (ii) in the affirmative would also answer (i) in the negative, as it would imply that $\height(F(K_n))$ is the largest possible height that can be attained from the Kingman process on any graph on the vertex set $[n]$.

A second direction could be to study the Kingman coalescent on other random graphs. A crucial part of our analysis here was the fact that vertex degrees in $G_{n,p}$ are all identically distributed and exchangeable. What if our underlying graph did not have this property? There are many examples of inhomogeneous random graphs one could choose, but for a concrete example, consider a random graph $G_n$ with some fixed degree sequence $(d_1,...,d_n)$. What can be said about the structure of $\king(G_n)$? When the degree sequence is not regular (i.e., we do not have $d_1 = \ldots = d_n$), how do the statistics of the height of a given vertex in $F(G_n)$ depend on its degree. We expect that high degree vertices are typically found at larger heights than low degree vertices.

It would also be interesting to study other coalescent rules on random graphs. As was mentioned in Section \ref{sec:background}, Kingman's coalescent is not the only commonly studied coalescent process. Deriving results similar to those of this article when we consider a generalization of the additive coalescent rather than Kingman's would be interesting. Moreover, the inhomogeneous random graph case should be just as interesting for the additive coalescent as it is for Kingman's coalescent. Some results concerning the multiplicative coalescent on $G_{n,p}$ are already known, but there is much yet to be done \cite{addario2017scaling,addario2021geometry}.

\section*{Acknowledgements}

CA acknowledges the support of the NSERC Postgraduate Scholarship-Doctoral. LAB acknowledges the support of the NSERC Discovery Grants program and the Canada Research Chairs program.

\setstretch{1.0}
\bibliographystyle{alpha}
\bibliography{ref}

\end{document}